\DeclareMathAlphabet{\mathpzc}{OT1}{pzc}{m}{it}
\newtheorem{theorem}{Theorem}[section]
\newtheorem*{claim*}{Claim}
\newtheorem{lemma}[theorem]{Lemma}
\newtheorem{lem}[theorem]{Lemma}
\newtheorem{corollary}[theorem]{Corollary}
\newtheorem{cor}[theorem]{Corollary}
\newtheorem{prop}[theorem]{Proposition}
\theoremstyle{definition}
\newtheorem{definition}[theorem]{Definition}
\newtheorem{Def}[theorem]{Definition}
\newtheorem{example}[theorem]{Example}
\theoremstyle{remark}
\newtheorem{remark}[theorem]{Remark}
\newtheorem{Rmk}[theorem]{Remark}
\numberwithin{equation}{section}
\newcommand{\norm}[1]{\lVert#1\rVert}
\newcommand{\op}{\operatorname}
\newcommand{\be}{\begin{equation}}
\newcommand{\ee}{\end{equation}}
\newcommand{\Ga}{\Gamma}
\newcommand{\bc}{\mathbb C}
\newcommand{\R}{\mathbb R}
\renewcommand{\H}{\mathbb H}
\newcommand{\Z}{\mathbb Z}
\newcommand{\ga}{\gamma}
\newcommand{\la}{\lambda}
\newcommand{\La}{\Lambda}
\newcommand{\inte}{\op{int}}
\newcommand{\ba}{\backslash}
\newcommand{\cal}{\mathcal}
\newcommand{\br}{\mathbb R}
\newcommand{\SO}{\op{SO}}
\newcommand{\Isom}{\op{Isom}}
\newcommand{\PSL}{\op{PSL}}
\newcommand{\F}{\cal F}
\newcommand{\bH}{\mathbb H}
\newcommand{\G}{\Gamma}
\renewcommand{\frak}{\mathfrak}
\newcommand{\e}{\varepsilon}
\newcommand{\BR}{\op{BR}}
\renewcommand{\L}{\mathcal L}
\newcommand{\fa}{\mathfrak a}
\renewcommand{\S}{\mathbb S}
\newcommand{\so}{\SO^\circ}
\newcommand{\Gr}{\Gamma_\rho}
\newcommand{\C}{\cal C}
\newcommand{\id}{\op{id}}
\newcommand{\ess}{\mathsf{E}}
\newcommand{\B}{\mathcal{B}}
\newcommand{\fg}{\frak g}
\renewcommand{\c}{\mathbb C}
\newcommand{\fk}{\mathfrak{k}}
\newcommand{\fp}{\mathfrak{p}}
\newcommand{\rank}{\op{rank}}
\newcommand{\Lie}{\op{Lie}}
\newcommand{\rd}{\mathfrak R_{\op{disc}}(\G, G_2)}
\newcommand{\nug}{\nu_{\op{graph}}}
\begin{document}

\title[Conformal measure rigidity]{Conformal measure rigidity for representations via self-joinings}

\author{Dongryul M. Kim}
\address{Department of Mathematics, Yale University, New Haven, CT 06511}
\email{dongryul.kim@yale.edu}

\author{Hee Oh}
\address{Department of Mathematics, Yale University, New Haven, CT 06511}
\email{hee.oh@yale.edu}
\thanks{
 Oh is partially supported by the NSF grant No. DMS-1900101.}
\begin{abstract}
Let $\Gamma$ be a Zariski dense discrete subgroup of a connected simple real algebraic group $G_1$. We discuss a rigidity problem for discrete faithful representations $\rho:\Gamma\to G_2$ and a surprising role played by higher rank conformal measures of the associated self-joining group.  Our approach recovers rigidity theorems of Sullivan, Tukia and Yue, as well as applies to Anosov representations, including Hitchin representations.

More precisely, for a given representation $\rho$ with a boundary map $f$ defined on the limit set $\Lambda$, we ask whether the extendability of $\rho$ to $G_1$ can be 
detected by the property that  $f$ pushes forward some $\Gamma$-conformal  measure class $[\nu_\Gamma]$ to a $\rho(\Gamma)$-conformal measure class $[\nu_{\rho(\Gamma)}]$.
When $\Gamma$ is of divergence type in a rank one group or when $\rho$ arises from an Anosov representation, we give an affirmative answer by showing that if the self-joining 
 $\Gamma_\rho=(\text{id} \times \rho)(\Gamma)$ is Zariski dense in $G_1\times G_2$, then the push-forward measures $(\text{id}\times f)_*\nu_\Ga$ and $(f^{-1}\times \text{id})_*\nu_{\rho(\Gamma)}$, which are higher rank $\Gamma_\rho$-conformal measures, cannot be in the same measure class.

\end{abstract}

\maketitle
\section{Introduction} 
For $i=1,2$, let $G_i$ be  a connected simple noncompact real algebraic group, and $(X_i, d_i)$ the  Riemannian symmetric space associated to $G_i$. 
Let $\Ga< G_1$ be a Zariski dense discrete subgroup. 
Let $$\rho:\Ga\to G_2 $$ be a discrete faithful Zariski dense representation.
In this paper, we are interested in the rigidity problem for $\rho$, that is,
 when can $\rho$ be extended to a Lie group isomorphism $G_1\to G_2$?
The rigidity of representations of discrete subgroups of a connected simple real algebraic group has been extensively studied in the last four decades. Especially 
when $\Ga$ is a lattice in $G_1$, which is not locally isomorphic to
$\PSL_2(\br)$, we have the celebrated Mostow strong rigidity when  $G_1=
G_2$ is of rank one and Margulis superrigidity when $G_1$ is of higher rank; in particular  any discrete faithful Zariski dense representation of $\Ga$ into $G_2$ extends to a Lie group isomorphism $G_1\to G_2$ (\cite{Mostowbook}, \cite{Prasad1973strong}, \cite{Margulisbook}). 

We will be interested in more general discrete subgroups, which are not necessarily lattices and which do admit non-trivial\footnote{We say $\rho:\Ga\to G_2$ is trivial if
it extends to a Lie group isomorphism $G_1\to G_2$} discrete faithful representations. The main aim of this paper is to present a new viewpoint in studying the rigidity problem for representations of discrete subgroups in terms of  conformal measure classes of self-joining groups.
For a given representation
$\rho:\Ga\to G_2$,
the self-joining of $\Ga$ via $\rho$ is the following discrete subgroup of $G_1\times G_2$:
$$\Ga_\rho:=(\id\times \rho)(\Ga) = \{ (\ga, \rho(\ga)) : \ga \in \Ga \}.$$
A basic observation is that $\Ga_\rho$ is not Zariski dense in $G_1\times G_2$ if and only if $\rho$ is trivial. This led us to search  properties of the
self-joining group $\Ga_\rho$ which would have direct implications on the rigidity of $\rho$; this was posed as a question in our earlier paper \cite{kim2022rigidity}. The main goal of this paper is to explain our findings
that  by studying  higher rank conformal measure classes of $\Ga_\rho$, we can relate the rigidity of $\rho$ with a relation among conformal measure classes of $\Ga$ and $\rho(\Ga)$ via its $\rho$-boundary map.

What is a $\rho$-boundary map? It is well-understood since Mostow's original proof of his strong rigidity theorem \cite{Mostowbook}
that investigating the behavior of 
{\it $\rho$ on the sphere at infinity} is important in the rigidity problem. For each $i=1,2$, let $\F_i $ denote the Furstenberg boundary of $G_i$, which is the quotient $G_i/P_i$ by a minimal parabolic subgroup $P_i$ of $G_i$.
We denote by $\La=\La_\Ga$ the limit set of $\Ga$,
defined as the set of all accumulation points of $\Gamma (o)$ in $\F_1$ for $o\in X_1$; it is the unique $\Ga$-minimal subset of $\F_1$ (Definition \ref{def.limitset}).
By a $\rho$-boundary map, we mean a $\rho$-equivariant continuous  embedding of $\La$ to $\F_2$. There can be at most one boundary map (Lemma \ref{ll}).
It will be our standing hypothesis that $\rho$ admits a boundary map
$$f:\La\to \F_2.$$

\subsection*{Conformal measures}
Our rigidity criterion involves the notion of conformal measures, which was introduced and studied extensively by Patterson and Sullivan for rank one discrete groups (\cite{Patterson1976limit}, \cite{Sullivan1979density}). This notion was generalized by Quint \cite{Quint2002Mesures}
to a discrete subgroup of any connected semisimple real algebraic group $G$ as follows.
Let $X$ be the associated symmetric space, $A$ a maximal diagonalizable subgroup with $\fa=\op{Lie} A$ and  $\F$ the Furstenberg boundary of $G$.
\begin{Def} Fix $o\in X$ and let $\Delta<G$ be a closed subgroup.
A Borel probability measure $\nu$ on ${\cal F}$ is called a $\Delta$-conformal measure
(with respect to $o$) if
there exists a linear form $\psi\in \fa^*$ such that for any $g\in \Delta$ and $\eta \in {\cal F}$, 
\be \label{eqn.ps0}
{ dg_* \nu \over d \nu} (\eta) = e^{\psi(\beta_{\eta}(o, g o))}
\ee 
where $g_* \nu(B) = \nu(g^{-1} B)$ for any Borel subset $B\subset \F$ and $\beta_{\cdot}(\cdot, \cdot)$ denotes the $\fa$-valued Busemann map (see Definition \ref{buse})\footnote{For $x\in X$, $d\nu_x (\eta) = e^{\psi( \beta_{\eta}(o, x))}d\nu(\eta)$
is a  $(\Delta, \psi)$-conformal measure with respect to $x$. The family $\{\nu_x:x\in X\}$ is referred to as a $(\Delta, \psi)$-conformal density. The uniqueness of a conformal measure is to be understood in terms of its associated conformal density.}.  We call $\psi$ a linear form associated to $\nu$. \end{Def}

Generalizing the construction of Patterson-Sullivan for rank one groups,
 Quint constructed a $\Delta$-conformal measure supported on the limit set $\La_\Delta\subset \F$ for any Zariski dense discrete subgroup $\Delta$ of $G$ \cite{Quint2002Mesures}.

\subsection*{Rigidity of $\rho$ and conformal measures}
We denote by
$$\mathfrak R_{\op{disc}}(\G, G_2)$$
the space of all discrete faithful Zariski dense representations
$\rho:\Ga\to G_2$ admitting boundary maps $f:\La\to \F_2$. If $\nu_\Ga$ is a $\G$-conformal measure supported on the limit set $\La$ of $\Ga$,
then the pushforward $f_*\nu_\Ga$ is a Borel measure supported on $f(\La)=\La_{\rho(\Ga)}$.
 If $\rho:\Ga\to G_2$ extends to a Lie group isomorphism $G_1\to G_2$, then
 $f_*\nu_\Ga$ is a $\rho(\Ga)$-conformal measure.

By investigating higher rank conformal measures for the associated self-joining group $\Ga_\rho$, we prove the converse holds:
if $[f_*\nu_\Ga]$ is a $\rho(\Ga)$-conformal measure class\footnote{For a given measure $\nu$, the notation $[\nu]$ denotes the class of all measures
equivalent to $\nu$, and we say that $[\nu]$ is a conformal measure class if
it contains a conformal measure.}
, 
then $\rho$ extends to a Lie group isomorphism
in the following two settings: 
\begin{enumerate}
    \item $\Ga$ is of divergence type in a rank one Lie group;
\item $\rho$ arises from an Anosov representation.
\end{enumerate}
We now describe our main theorems in these two settings.

\subsection*{Divergence-type groups}
We denote by $\text{rank } G_1$ the real rank of $G_1$, which is the dimension of a maximal real split torus of $G_1$.
When $\text{rank }G_1=1$, a discrete subgroup $\Ga<G_1$ is  {\it of divergence type} if
 $\sum_{\gamma \in \Ga} e^{-\delta_\Ga d_{1}(o_1, \ga o_1)} =\infty$ where $\delta_\Ga$ denotes  the critical exponent of $\Ga$.
 The divergence-type condition is satisfied for lattices, geometrically finite groups, as well as normal
 subgroups $\Ga$ of convex cocompact subgroups $\Ga_0$ with $\Ga_0/\Ga\simeq \Z^d$ for $d=1,2$
(\cite[Proposition 2]{Sullivan1984entropy}, \cite[Proposition 3.7]{Corlette1999limit}, \cite[Theorem 4.7]{Rees1981checking}).
Another important class of divergence-type groups are finitely generated discrete subgroups of $\PSL_2(\c)\simeq \SO^\circ(3,1)$ whose limit set is $\S^2 = \partial \H^3$; this follows from \cite[Corollary 11.2]{Canary1993ends} and the tameness theorem (\cite{agol2004tameness}, \cite{Calegari2006shrinkwrapping}).

For $\Ga$ of divergence type, there exists a unique $\Ga$-conformal measure of dimension $\delta_\Ga$, say, $\nu_\Ga$, supported on $\La$ (\cite{Sullivan1979density}, \cite[Corollary 1.8]{Roblin2003ergodicite}). 
\begin{theorem}\label{mmm} Let $\op{rank} G_1=1$ and $\Ga<G_1$ be of divergence type. Let $\rho\in \rd$.
Suppose  $\op{rank} G_2=1$ or $f:\La\to \F_2$ is a continuous extension of $\rho$.
Then  the following are equivalent:
\begin{enumerate}
\item  $[f_*\nu_\Ga]$  is a $\rho(\Ga)$-conformal measure class;
\item $\rho$ extends to a Lie group isomorphism $G_1\to G_2$.
\end{enumerate}
In particular, if  $G_1 \not\simeq G_2$ (e.g., $\op{rank}G_2\ge 2$), then  $f_*\nu_\Ga$ is not
equivalent to any $\rho(\Ga)$-conformal measure. 
\end{theorem}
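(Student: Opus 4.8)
The substance of the statement is the equivalence of (1) and (2); granting it, the ``in particular'' clause is immediate, since $G_1\not\simeq G_2$ precludes any Lie group isomorphism $G_1\to G_2$ and so makes (2), hence (1), fail, which is exactly the assertion that $f_*\nu_\Ga$ is equivalent to no $\rho(\Ga)$-conformal measure. The implication (2)$\Rightarrow$(1) is the easy direction already noted above: if $\rho$ extends to an isomorphism it transports the Busemann cocycle of $G_2$ to that of $G_1$ through $f$, so $f_*\nu_\Ga$ is literally $\rho(\Ga)$-conformal.

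For (1)$\Rightarrow$(2) the plan is to argue by contraposition through the self-joining $\Ga_\rho$. Recall the basic observation that $\Ga_\rho$ is Zariski dense in $G_1\times G_2$ exactly when $\rho$ does not extend, so I assume $\Ga_\rho$ is Zariski dense and aim to contradict (1). Write $\fa=\fa_1\oplus\fa_2$ for the Cartan subalgebra of $G_1\times G_2$; the Furstenberg boundary of the product is $\F_1\times\F_2$ and the limit set of $\Ga_\rho$ is the graph $\{(\xi,f(\xi)):\xi\in\La\}$, which is $\Ga_\rho$-equivariantly homeomorphic to $\La$ by $\xi\mapsto(\xi,f(\xi))$. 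This last point is where the hypothesis ``$\rank G_2=1$ or $f$ a continuous extension of $\rho$'' is used: it guarantees that the graph really is $\La_{\Ga_\rho}$ and that the pushforwards below are genuine conformal measures supported on it.

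I would then produce two $\Ga_\rho$-conformal measures on $\La_{\Ga_\rho}$, namely
$$\mu_1:=(\id\times f)_*\nu_\Ga \qquad\text{and}\qquad \mu_2:=(f^{-1}\times\id)_*\nu_{\rho(\Ga)},$$
where $\nu_{\rho(\Ga)}$ is the $(\rho(\Ga),\psi_2)$-conformal measure provided by (1). Because $\id\times f$ and $f^{-1}\times\id$ intertwine the $\Ga_\rho$-action on the graph with the $\Ga$- and $\rho(\Ga)$-actions respectively, and because the Busemann map of a product splits as a direct sum, $\mu_1$ is $(\Ga_\rho,\psi_\Ga)$-conformal with $\psi_\Ga(v_1,v_2)=\delta_\Ga\, v_1$ (under the rank-one identification $\fa_1^*\cong\R$) and $\mu_2$ is $(\Ga_\rho,\hat\psi_2)$-conformal with $\hat\psi_2(v_1,v_2)=\psi_2(v_2)$. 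Crucially these two linear forms differ: $\psi_\Ga$ is nonzero on $\fa_1$ since $\delta_\Ga>0$, whereas $\hat\psi_2$ vanishes on $\fa_1$. Finally, hypothesis (1) says $f_*\nu_\Ga\sim\nu_{\rho(\Ga)}$, and pushing this equivalence forward by $f^{-1}\times\id$ gives $\mu_1\sim\mu_2$: one measure class carries conformal measures for two \emph{distinct} linear forms.

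The step I expect to be the main obstacle is showing that this cannot happen for a Zariski dense group, i.e. uniqueness of the linear form within a conformal measure class. With $h=d\mu_2/d\mu_1$, conformality of both measures yields the cocycle identity
$$\log h(g^{-1}\eta)-\log h(\eta)=(\hat\psi_2-\psi_\Ga)\big(\beta_\eta(o,go)\big),\qquad g\in\Ga_\rho,$$
and evaluating along the attracting fixed point of a loxodromic $g$, where the Busemann cocycle recovers the Jordan projection $\lambda(g)$, forces $(\hat\psi_2-\psi_\Ga)(\lambda(g))=0$ for every $g$. Making this evaluation rigorous — propagating the a.e.-defined $h$ to the fixed points — is precisely where the divergence-type hypothesis is essential, as it delivers uniqueness and ergodicity of $\nu_\Ga$ and hence the needed regularity. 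Since $\Ga_\rho$ is Zariski dense, Benoist's limit cone theorem gives that $\{\lambda(g):g\in\Ga_\rho\}$ spans $\fa$, so $\hat\psi_2=\psi_\Ga$, contradicting $\psi_\Ga\neq\hat\psi_2$. Hence $\Ga_\rho$ is not Zariski dense, $\rho$ extends, and (1)$\Rightarrow$(2) follows.
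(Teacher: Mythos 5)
Your overall architecture is the same as the paper's: contraposition via Zariski density of $\Ga_\rho$, the two graph-conformal measures $(\id\times f)_*\nu_\Ga$ and $(f^{-1}\times\id)_*\nu_{\rho(\Ga)}$ carrying the distinct linear forms $\sigma_1=\delta_\Ga\,\pi_1$ and $\psi_2\circ\pi_2$, and a contradiction obtained from Jordan projections plus Benoist's density theorem; the direction (2)$\Rightarrow$(1) and the ``in particular'' clause are handled correctly. The genuine gap is exactly at the step you call the main obstacle, and your proposed fix does not work. The Radon--Nikodym derivative $h=d\mu_2/d\mu_1$ is defined only $\mu_1$-almost everywhere, and the attracting fixed point $y_g$ of a loxodromic element is a null point for these non-atomic measures, so the a.e.\ cocycle identity cannot be ``evaluated'' at $y_g$; moreover, ergodicity and uniqueness of $\nu_\Ga$ give no pointwise regularity whatsoever of $h$ (equivalent ergodic measures can have arbitrarily irregular densities), so divergence type does not enter in the way you claim. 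What replaces pointwise evaluation in the paper is the essential subgroup: Proposition \ref{prop.loabscont} shows that $\mu_2\ll\mu_1$ forces the two linear forms to agree on $\ess_{\mu_1}(\Ga_\rho)$ --- an argument with positive-measure sets, requiring no regularity of $h$ --- and Theorem \ref{thm.ess}, the technical heart of the paper, shows that Zariski density of $\Ga_\rho$ forces $\ess_{\nug}(\Ga_\rho)=\fa$. Proving that Jordan projections lie in the essential subgroup (Proposition \ref{prop.ess}) is precisely the measure-theoretically rigorous version of your ``evaluate at $y_g$'': it requires the weak-Myrberg limit set to have full graph-conformal measure (Proposition \ref{wM}, which is where divergence type actually enters, via the Hopf--Tsuji--Sullivan dichotomy, Theorem \ref{thm.divergence}), together with the shadow lemma (Lemma \ref{lem.shadow}) and a covering argument (Lemma \ref{lem.covering}); and it only yields $\la(\ga_0)\in\ess_{\nug}(\Ga_\rho)$ for loxodromic $\ga_0$ with $\sigma_1(\la(\ga_0))$ large, which is why Lemma \ref{finite} is needed before Theorem \ref{dense0} can close the argument.

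A second, related error: you misattribute the hypothesis ``$\op{rank}G_2=1$ or $f$ is a continuous extension of $\rho$.'' It is not needed to identify $\La_{\Ga_\rho}$ with the graph of $f$, nor to make the pushforwards genuine conformal measures --- both follow from the existence of the continuous equivariant map $f$ alone (Section \ref{set}, Proposition \ref{lem.pushforward}). It is used exactly once, in the proof of Lemma \ref{lem.loxpush}: to upgrade the convergence $g_\ell o_1\to\eta_1$ to $\rho(g_\ell)o_2\to f(\eta_1)$, which is automatic when $f$ is a continuous extension and requires a genuine hyperbolic-geometry argument (Gromov products and cocompactness along the axis of $\rho(g_0)$) when $\op{rank}G_2=1$. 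Without this input the weak-Myrberg set need not have full measure, and the entire essential-subgroup computation --- hence your key step --- collapses.
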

That $f$ is a {\it continuous} extension of $\rho$ means that for any sequence $g_{\ell} \in \Ga$,
the convergence $g_\ell o_1\to
\xi$ implies the convergence $\rho(g_\ell) o_2\to f(\xi)$
in the sense of Definition \ref{conv}, where $o_1\in X_1$ and $o_2\in X_2$.
We refer to Example \ref{ex.bdr} of representations admitting boundary maps which are continuous extensions.

\subsection*{On the rigidity results of Mostow and Sullivan} 

The Mostow-Sullivan quasiconformal rigidity (\cite[Theorem 12.1]{Mostow1968quasiconformal}, {\cite[Theorem V]{Sullivan1981ergodic}}) can be deduced from Theorem \ref{mmm}:
\begin{cor} \label{c2}
\begin{enumerate}
\item Let $n\ge 3$. Any discrete subgroup $\Ga < \so(n, 1)$ of divergence type with $\La = \S^{n-1}$ is quasiconformally rigid.\footnote{That $\G$ is quasiconformally rigid means that any quasiconformal deformation of $\Ga$ in $\so(n,1)$ extends to a Lie group automorphism of $\so(n,1)$.}
\item Any finitely generated discrete subgroup $\Ga<\PSL_2(\c)$ with $\La=\S^2$ is  quasiconformally rigid.
\end{enumerate}
\end{cor}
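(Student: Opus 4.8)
The plan is to reduce both statements to Theorem \ref{mmm} applied with $G_1=G_2=\so(n,1)$. Recall that a quasiconformal deformation of $\Ga$ is a representation $\rho:\Ga\to\so(n,1)$ for which there is a quasiconformal homeomorphism $f:\S^{n-1}\to\S^{n-1}$ with $f\circ\ga=\rho(\ga)\circ f$ on $\S^{n-1}=\p\mathbb{H}^n$ for all $\ga\in\Ga$; in particular $\rho(\Ga)=f\Ga f^{-1}$ is again discrete. First I would check that $\rho\in\rd$ (with $G_2=\so(n,1)$): it is faithful because $f$ conjugates the two actions and $\Ga$ acts faithfully on $\S^{n-1}$; it is discrete by the definition of a quasiconformal deformation; and it is Zariski dense because its limit set $\La_{\rho(\Ga)}=f(\La)=\S^{n-1}$ is all of $\F_2$, and a discrete subgroup of $\so(n,1)$ with full limit set is Zariski dense (a proper Zariski closed subgroup fixes a point of $\mathbb{H}^n\cup\p\mathbb{H}^n$ or a proper totally geodesic subspace, hence has proper limit set). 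The homeomorphism $f$ is a continuous $\rho$-equivariant embedding of $\La=\S^{n-1}$ into $\F_2=\S^{n-1}$, so it is the boundary map of $\rho$. Since $\op{rank}\so(n,1)=1$, the hypothesis of Theorem \ref{mmm} on $G_2$ holds, so it suffices to verify condition (1) of that theorem, that $[f_*\nu_\Ga]$ is a $\rho(\Ga)$-conformal measure class; then condition (2) yields that $\rho$ extends to a Lie group automorphism of $\so(n,1)$, which is precisely quasiconformal rigidity.

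The heart of the argument is a measure-class identification. Because $\Ga$ is of divergence type with $\La=\S^{n-1}$, its critical exponent equals $n-1$ and the conformal measure $\nu_\Ga$ lies in the Lebesgue (visual) measure class $[\op{Leb}]$ on $\S^{n-1}$ (Sullivan \cite{Sullivan1979density}, \cite{Sullivan1981ergodic}). Here I would invoke the absolute continuity of quasiconformal maps: for $n\ge 3$, so that $\S^{n-1}$ has dimension at least $2$, a quasiconformal homeomorphism carries Lebesgue-null sets to Lebesgue-null sets, and the same holds for $f^{-1}$; hence $f_*$ preserves the Lebesgue measure class and $f_*\nu_\Ga\in[f_*\op{Leb}]=[\op{Leb}]$. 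Finally, $\op{Leb}$ on $\S^{n-1}=\p\mathbb{H}^n$ is a conformal measure for the full group $\so(n,1)$ of dimension $n-1$, since $\tfrac{dg_*\op{Leb}}{d\op{Leb}}(\xi)=e^{(n-1)\beta_\xi(o,go)}$ for every $g\in\so(n,1)$; in particular $[\op{Leb}]$ is a $\rho(\Ga)$-conformal measure class. Combining, $[f_*\nu_\Ga]=[\op{Leb}]$ is a $\rho(\Ga)$-conformal measure class, which is condition (1) of Theorem \ref{mmm}, and part (1) of the corollary follows.

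For part (2), recall $\PSL_2(\c)\simeq\so(3,1)$ and $\S^2=\p\mathbb{H}^3$. A finitely generated discrete subgroup of $\PSL_2(\c)$ with $\La=\S^2$ is of divergence type by \cite[Corollary 11.2]{Canary1993ends} together with the tameness theorem (\cite{agol2004tameness}, \cite{Calegari2006shrinkwrapping}), exactly as noted in the introduction. Hence part (1) with $n=3$ applies verbatim and yields quasiconformal rigidity.

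I expect the main obstacle to be the measure-class step of the second paragraph, and it is exactly here that the restriction $n\ge 3$ enters: quasiconformal self-homeomorphisms of $\S^1$ need not be absolutely continuous, so the identification $f_*\nu_\Ga\in[\op{Leb}]$ can fail when $n=2$. The two classical inputs — that divergence type together with $\La=\S^{n-1}$ forces $\nu_\Ga$ into the Lebesgue class (equivalently $\delta_\Ga=n-1$), and that quasiconformal maps in dimension at least $2$ are absolutely continuous — carry all the analytic weight; once they are in place, the remainder is the bookkeeping needed to package $f$ as a boundary map and feed condition (1) into Theorem \ref{mmm}.
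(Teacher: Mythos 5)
Your proposal is correct and follows essentially the same route as the paper: reduce to Theorem \ref{mmm}, identify $\nu_\Ga$ with $\op{Leb}_{\S^{n-1}}$, and use absolute continuity of quasiconformal self-maps of $\S^{n-1}$ in dimension $\ge 2$ (exactly where $n \ge 3$ enters) to conclude that $[f_*\nu_\Ga] = [\op{Leb}_{\S^{n-1}}]$ is a $\rho(\Ga)$-conformal measure class, with part (2) reduced to part (1) via divergence type in both treatments. The only cosmetic difference is that the paper derives the measure-class identification from Mostow's theorem that a quasiconformal map has non-zero Jacobian almost everywhere, combined with ergodicity (Lemma \ref{ergg}), whereas you invoke null-set preservation by $f$ and $f^{-1}$ directly; these are interchangeable standard facts.
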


 For $\Ga$ as in the above corollary, $\nu_\Ga$ is given by the Lebesgue measure $\op{Leb}_{\S^{n-1}}$.
On the other hand, for $n\ge 3$, a quasiconformal homeomorphism  $f:\S^{n-1}\to \S^{n-1}$ has
non-zero Jacobian at $\op{Leb}_{\S^{n-1}}$-almost every point \cite[Theorem 9.4]{Mostow1968quasiconformal}. Therefore $\op{Leb}_{\S^{n-1}}\ll f_*\op{Leb}_{\S^{n-1}}$\footnote{The notation $\nu_1\ll \nu_2$ means that $\nu_1$ is absolutely continuous with respect to $\nu_2$.}  and hence Corollary \ref{c2}(1) follows from Theorem \ref{mmm} (see also Lemma \ref{ergg}). If  $\Gamma<\SO^\circ (n,1)$ is a lattice, Mostow  \cite{Mostowbook} 
(see also Prasad {\cite[Theorem B]{Prasad1973strong}} for non-uniform lattices) showed that
 any discrete faithful representation of $\Ga$ into $\SO^\circ(n,1)$ is indeed a quasiconformal deformation, as the first step of his proof of rigidity. Hence  Mostow rigidity is a special case of Corollary \ref{c2}(1).
Since $\Ga$ as in (2) is of divergence type as mentioned before,  the case (2) is now a special case of~(1).

\begin{Rmk}\rm  When $X_1=X_2$ is the real hyperbolic space,
Theorem \ref{mmm} is due to Sullivan when $\delta_\Ga=\delta_{\rho(\Ga)}$ \cite[Theorem 5]{Sullivan1982discrete} and to Tukia in general \cite[Theorem 3C]{Tukia1989rigidity}.
 Yue extended it for a general rank one space \cite[Theorem A]{Yue1996mostow}. Their proofs use the ergodicity of the geodesic flow with respect to the Bowen-Margulis-Sullivan measure on the unit tangent bundle of $\Ga\ba X_1$. The rank one hypothesis on $G_2$ is  essential for their arguments.
 \end{Rmk} 
 
\subsection*{Anosov representations}
The notion of Anosov representations was introduced by Labourie for surface groups \cite{Labourie2006anosov}
and was extended by Guichard-Wienhard for hyperbolic groups \cite{Guichard2012anosov} (see also \cite{kapovich2018relativizing}). 
 For a hyperbolic group $\Sigma$, let $\rho_i : \Sigma \to G_i$ be a discrete faithful Zariski dense Anosov representation with respect to a minimal parabolic subgroup of $G_i$ for $i = 1, 2$. This means that
denoting by $\partial \Sigma$ the Gromov boundary,
there exists an equivariant homeomorphism $f_i:\partial \Sigma \to \La_{\rho_i(\Sigma)}\subset \F_i
$ and the limit set $\La_{\rho_i(\Sigma)}$ is antipodal, that is, two distinct points are in general position.

Setting $\Ga = \rho_1(\Sigma)$,  consider a representation of $\Ga$ given by $\rho := \rho_2 \circ \rho_1^{-1} : \Ga \to G_2$ with boundary map
$f=f_2\circ f_1^{-1}:\La_{\Ga} \to \F_2$. The following  may be regarded as an Anosov version of the rigidity theorem of
Sullivan-Tukia-Yue (\cite{Sullivan1982discrete}, \cite{Tukia1989rigidity}, \cite{Yue1996mostow}). 
\begin{theorem}\label{ananintro}\label{anan}
Let $\nu_\Ga$ be any $\Ga$-conformal measure on $\La_{\Ga}$.
 Either\footnote{The notation $\nu_1\perp\nu_2$ means that $\nu_1$ and $\nu_2$ are mutually singular to each other.}  $$f_* \nu_{\Ga}\perp \nu_{\rho(\Ga)}$$  for all $\rho(\Ga)$-conformal measures $\nu_{\rho(\Ga)}$ on $\La_{\rho(\Ga)}$
or $\rho : \Ga \to G_2$ extends to a Lie group isomorphism $G_1 \to G_2$.
\end{theorem}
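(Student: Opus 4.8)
The plan is to prove the contrapositive: assuming that $\rho$ does \emph{not} extend to a Lie group isomorphism $G_1\to G_2$, I will show that $f_*\nu_\Ga\perp\nu_{\rho(\Ga)}$ for \emph{every} $\rho(\Ga)$-conformal measure $\nu_{\rho(\Ga)}$ on $\La_{\rho(\Ga)}$, which is exactly the first alternative. By the basic observation recorded in the introduction---an application of Goursat's lemma together with the simplicity of $G_1$ and $G_2$---non-extendability is equivalent to the self-joining $\Gr=(\id\times\rho)(\Ga)$ being Zariski dense in $G_1\times G_2$. Moreover $\Gr$ is the image of the hyperbolic group $\Sigma$ under the product map $\sigma\mapsto(\rho_1(\sigma),\rho_2(\sigma))$ of the two Anosov representations, so it is itself a Zariski dense Anosov subgroup of $G_1\times G_2$ with respect to the minimal parabolic $P_1\times P_2$; its boundary map is $\sigma\mapsto(f_1(\sigma),f_2(\sigma))$ and its limit set $\La_{\Gr}$ is the graph $\{(\xi,f(\xi)):\xi\in\La\}$. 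This places the problem squarely inside the ergodic theory of conformal measures for higher rank Anosov groups.

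First I would transport the two one-sided measures to the product boundary $\F_1\times\F_2$ and recognize them as higher rank $\Gr$-conformal measures. Writing $p_i:\fa_1\oplus\fa_2\to\fa_i$ for the coordinate projections, a direct computation with the $(\fa_1\oplus\fa_2)$-valued Busemann cocycle, using the equivariance $f(\ga\xi)=\rho(\ga)f(\xi)$, shows that if $\nu_\Ga$ has associated form $\psi_1\in\fa_1^*$ and $\nu_{\rho(\Ga)}$ has associated form $\psi_2\in\fa_2^*$, then $\nu_1:=(\id\times f)_*\nu_\Ga$ is $(\Gr,\Psi_1)$-conformal with $\Psi_1=\psi_1\circ p_1$, while $\nu_2:=(f^{-1}\times\id)_*\nu_{\rho(\Ga)}$ is $(\Gr,\Psi_2)$-conformal with $\Psi_2=\psi_2\circ p_2$. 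Both are supported on $\La_{\Gr}$. Since the single homeomorphism $\zeta\mapsto(f^{-1}(\zeta),\zeta)$ of $\La_{\rho(\Ga)}$ onto the graph carries $f_*\nu_\Ga$ to $\nu_1$ and $\nu_{\rho(\Ga)}$ to $\nu_2$, the desired conclusion $f_*\nu_\Ga\perp\nu_{\rho(\Ga)}$ is \emph{equivalent} to $\nu_1\perp\nu_2$. It therefore suffices to prove that these two higher rank conformal measures are mutually singular.

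The heart of the matter, and what I expect to be the main obstacle, is the following rigidity statement for the Zariski dense Anosov group $\Gr$: two conformal measures on $\La_{\Gr}$ whose associated linear forms \emph{disagree} somewhere on the limit cone $\mathcal L_{\Gr}\subset\fa_1\oplus\fa_2$ are necessarily mutually singular. The route I would take is through the ergodic theory of $\Gr$. Using that $\Gr$ is Anosov and $\La_{\Gr}$ is antipodal, I would pair each $\nu_i$ with a conformal measure for the dual form $\Psi_i^{\star}$ (precomposition with the opposition involution) to build the Bowen--Margulis--Sullivan measure on $\Gr\backslash(G_1\times G_2)$ attached to $\Psi_i$. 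If $\nu_1$ and $\nu_2$ were not mutually singular, a Hopf-type argument based on the ergodicity of the BMS measure---valid for Zariski dense Anosov groups---would force the Radon--Nikodym cocycle $e^{(\Psi_1-\Psi_2)(\beta)}$ to be a measurable coboundary, which is possible only when $\Psi_1=\Psi_2$ on every direction realized by the Cartan projections of $\Gr$, that is, on all of $\mathcal L_{\Gr}$. A more hands-on alternative is the higher rank shadow lemma: it computes the mass $\nu_i$ gives to the shadow of a ball about $g\cdot o$ as $\asymp e^{-\Psi_i(\mu(g))}$, so that along a sequence whose Cartan projections concentrate in a direction where $\Psi_1\neq\Psi_2$ the two measures exhibit different local dimensions and are thus singular.

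Granting this rigidity, the proof concludes rapidly. If $\nu_1\not\perp\nu_2$, then $\psi_1\circ p_1=\psi_2\circ p_2$ on $\mathcal L_{\Gr}$; but $\Gr$ is Zariski dense in $G_1\times G_2$, so by Benoist's theorem $\mathcal L_{\Gr}$ has nonempty interior and hence spans $\fa_1\oplus\fa_2$, forcing $\psi_1\circ p_1=\psi_2\circ p_2$ as linear forms on all of $\fa_1\oplus\fa_2$. Evaluating on $\fa_1\oplus 0$ and on $0\oplus\fa_2$ gives $\psi_1=0$ and $\psi_2=0$, which is impossible: the form associated to a conformal measure of the nonelementary Zariski dense group $\Ga$ (resp.\ $\rho(\Ga)$) cannot vanish, since otherwise $\nu_\Ga$ would be a $\Ga$-invariant probability measure on $\La$, a configuration no nonelementary group admits. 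This contradiction yields $\nu_1\perp\nu_2$, equivalently $f_*\nu_\Ga\perp\nu_{\rho(\Ga)}$, and establishes the dichotomy.
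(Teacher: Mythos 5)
Your proposal is correct and follows essentially the same route as the paper: you transport the two measures to the limit set of the self-joining $\Gr$ as $(\Gr,\psi_1\circ\pi_1)$- and $(\Gr,\psi_2\circ\pi_2)$-conformal measures, use that non-extendability of $\rho$ is equivalent to Zariski density of $\Gr$ (which is then a Zariski dense Anosov subgroup of $G_1\times G_2$ with respect to the minimal parabolic), and conclude mutual singularity from the fact that conformal measures of such a group with distinct associated linear forms are mutually singular. The only difference is one of bookkeeping: the paper invokes this last singularity statement as a black box (Theorem \ref{loo}, quoted from the Anosov literature), whereas you sketch how it would be proved (via a Hopf/BMS or shadow-lemma argument) and you spell out explicitly why $\psi_1\circ\pi_1\neq\psi_2\circ\pi_2$ on the limit cone---namely that a vanishing form would make $\nu_\Ga$ or $\nu_{\rho(\Ga)}$ an invariant probability measure, which Zariski density forbids---a verification the paper leaves implicit.
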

This is equivalent to saying that either
$(f_1^{-1})_* \nu_{\Ga} \perp (f_2^{-1})_*\nu_{\rho(\Ga)}$ or there exists a
Lie group isomorphism $\Phi: G_1 \to G_2$ such that $\rho_2=\Phi\circ \rho_1$.

\subsection*{Hitchin representations} 
Let $\Gamma$ be  a torsion-free uniform lattice of $\PSL_2(\br)$,
and $\pi_d$ denote the $d$-dimensional irreducible representation   
$$\pi_d: \PSL_2(\br)\to \PSL_d(\br)$$ which is unique up to conjugation.
A Hitchin representation
$\rho:\Gamma\to \PSL_d(\br)$ is a representation which
 belongs to the same connected component as $\pi_d|_{\Gamma}$ in the character variety $\op{Hom}(\Gamma, \PSL_d(\br))/\sim$ where the equivalence is given by conjugations.
A Hitchin representation is known to be an Anosov representation with respect to a minimal parabolic subgroup by Labourie \cite{Labourie2006anosov}. Moreover, the Zariski closure
of its image is a connected simple real algebraic group (see \cite{Guichard}, \cite[Corollary 1.5]{SambarinoZc}). Therefore by considering $\rho$ as a representation into the Zariski closure of $\rho(\Gamma)$, we deduce the following corollary from Theorem \ref{anan}.
 The Furstenberg
boundary $\cal F$ of $\PSL_d(\br)$ is the full flag variety of $\br^d$.
Let $f:\S^1\to \F$ be the $\rho$-boundary map.
\begin{cor} \label{cor.hitchin}
For any Hitchin representation $\rho:\Gamma \to\PSL_d(\br)$,
$f_*{\op{Leb}_{\S^1}}$ is mutually singular to any $\rho(\Gamma)$-conformal measure on $f(\S^1)$, except for the case when $d=2$ and $\rho$ is a conjugation.
\end{cor}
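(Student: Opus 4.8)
The plan is to read the corollary as the special case of Theorem~\ref{anan} attached to the pair consisting of the lattice embedding of $\Gamma$ and the Hitchin representation, and then to convert the dichotomy produced by that theorem into the claimed singularity by determining exactly when $\rho$ extends to a Lie group isomorphism. First I would take $\Sigma=\Gamma$ as an abstract surface group, let $\rho_1:\Gamma\hookrightarrow\PSL_2(\br)$ be the given cocompact lattice embedding, and let $\rho_2=\rho:\Gamma\to G_2$ be the Hitchin representation regarded as a representation into the Zariski closure $G_2$ of $\rho(\Gamma)$. The hypotheses of Theorem~\ref{anan} then need to be checked: $\rho_1$ is convex cocompact in the rank one group $\PSL_2(\br)$, hence Anosov, with boundary homeomorphism $f_1:\partial\Gamma=\S^1\to\La_\Gamma=\S^1$, and is Zariski dense by Borel density; $\rho_2$ is Anosov with respect to a minimal parabolic by Labourie, with boundary map $f_2:\S^1\to\F$, and is Zariski dense in $G_2$ by construction, with antipodal (Frenet) limit set. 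Identifying $\Gamma$ with $\rho_1(\Gamma)$, the representation $\rho_2\circ\rho_1^{-1}$ is exactly $\rho$, with boundary map $f=f_2\circ f_1^{-1}$.

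Since $\Gamma$ is cocompact we have $\delta_\Gamma=1$, and the unique $\Gamma$-conformal measure on $\La_\Gamma=\S^1$ is the Lebesgue measure $\op{Leb}_{\S^1}$; this is the measure $\nu_\Gamma$ to be fed into Theorem~\ref{anan}. The theorem then gives the alternative: either $f_*\op{Leb}_{\S^1}\perp\nu_{\rho(\Gamma)}$ for every $\rho(\Gamma)$-conformal measure $\nu_{\rho(\Gamma)}$ supported on $f(\S^1)=\La_{\rho(\Gamma)}$, or $\rho$ extends to a Lie group isomorphism $\PSL_2(\br)\to G_2$. The first alternative is precisely the asserted mutual singularity, so the remaining work is to show that the second alternative occurs only in the exceptional case.

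To carry this out I would argue as follows. If $\rho$ extends to an isomorphism $\PSL_2(\br)\to G_2$, then $G_2$ is a three-dimensional rank one simple group isomorphic to $\PSL_2(\br)$. By the classification of Zariski closures of Hitchin representations (Guichard, Sambarino), the only Hitchin representations whose Zariski closure is isomorphic to $\PSL_2(\br)$ are those in the Fuchsian locus, where $G_2$ is the principal $\PSL_2(\br)$ and $\rho=\pi_d\circ\iota$ for a Fuchsian $\iota:\Gamma\to\PSL_2(\br)$. Pulling the extending isomorphism back through the faithful map $\pi_d$ then forces $\iota$ itself to extend to an automorphism of $\PSL_2(\br)$, i.e.\ $\iota$ is a conjugation by an element of $\PGL_2(\br)$. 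For $d=2$, where $\pi_2=\op{id}$, this says exactly that $\rho$ is a conjugation, which is the stated exception; for $d\ge 3$ the image lands in the proper subgroup $\pi_d(\PSL_2(\br))$, so $\rho$ is not Zariski dense in $\PSL_d(\br)$ and, viewed inside its Zariski closure, it reduces to this same rank one conjugation. Outside these cases $\rho$ does not extend, the first alternative holds, and $f_*\op{Leb}_{\S^1}$ is mutually singular to every $\rho(\Gamma)$-conformal measure on $f(\S^1)$.

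The hard part will be this last translation: establishing that extendability of $\rho$ to an isomorphism $\PSL_2(\br)\to G_2$ coincides with the single exceptional case. This leans on two inputs beyond Theorem~\ref{anan}---the classification of possible Zariski closures of Hitchin representations, used to force $G_2$ to be the principal $\PSL_2(\br)$, and an explicit understanding of $\op{Aut}(\PSL_2(\br))$ (whose outer part comes from $\PGL_2(\br)$), used to recognize the extension condition as the conjugation condition. By comparison, verifying the Anosov and antipodality hypotheses for $\rho_1,\rho_2$ and identifying $\op{Leb}_{\S^1}$ as the Patterson--Sullivan measure of the cocompact group $\Gamma$ should be routine.
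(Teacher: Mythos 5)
Your reduction is the same one the paper uses: since $\Ga$ is cocompact it is of divergence type and $\op{Leb}_{\S^1}$ is a $\Ga$-conformal measure, Hitchin representations are Anosov with respect to a minimal parabolic, and one applies Theorem \ref{anan} after replacing $\PSL_d(\br)$ by the Zariski closure $G_2$ of $\rho(\Ga)$, which is simple by Guichard and Sambarino. The genuine gap sits exactly at the step you defer as ``the hard part'', and that step cannot be carried out, because the equivalence you want is false: extendability of $\rho$ to a Lie group isomorphism $\PSL_2(\br)\to G_2$ does \emph{not} coincide with ``$d=2$ and $\rho$ a conjugation''. Your own intermediate analysis exhibits the problem: for every $d\ge 3$ the Fuchsian-locus representation $\rho=\pi_d|_\Ga$ (more generally $\pi_d\circ c_g$, with $c_g$ conjugation by $g\in\PGL_2(\br)$) is a Hitchin representation whose Zariski closure is $G_2=\pi_d(\PSL_2(\br))$, and it \emph{does} extend, via $\pi_d$ itself, to an isomorphism $\PSL_2(\br)\to G_2$. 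For such $\rho$ Theorem \ref{anan} yields no singularity, and singularity genuinely fails: choosing Iwasawa decompositions compatibly with $\pi_d$, one has $\beta_{f(\xi)}(o,\rho(g)o)=\iota_*\beta_{\xi}(o,go)$, where $\iota_*$ is the embedding of the Cartan subspace of $\PSL_2(\br)$ into the Cartan subspace $\fa$ of $\PSL_d(\br)$ induced by $\pi_d$; hence any $\psi\in\fa^*$ satisfying $\psi(\iota_*(v))=\delta_\Ga v$ makes $f_*\op{Leb}_{\S^1}$ itself a $(\rho(\Ga),\psi)$-conformal measure on $f(\S^1)$. Your sentence that for $d\ge 3$ the representation ``reduces to this same rank one conjugation'' is not an argument and cannot become one: these representations have $d\ge 3$, lie outside the stated exception, and violate the asserted conclusion.

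The obstruction you ran into is therefore real, and it is inherited from the statement and from the paper's own one-line deduction, which concludes that $\rho$ extends to an isomorphism $\PSL_2(\br)\to\PSL_d(\br)$; that is an application of Theorem \ref{anan} with $G_2=\PSL_d(\br)$, legitimate only when $\rho(\Ga)$ is Zariski dense in $\PSL_d(\br)$ --- precisely what fails on the Fuchsian locus. What your outline proves, once completed, is the corrected statement: $f_*\op{Leb}_{\S^1}$ is mutually singular to every $\rho(\Ga)$-conformal measure on $f(\S^1)$ unless $\rho$ is a conjugate of $\pi_d|_\Ga$; for $d=2$ this is exactly the stated exception. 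Even for that version one more point is missing from your proposal: Theorem \ref{anan} gives singularity against conformal measures defined relative to $G_2$ on the limit set in the Furstenberg boundary of $G_2$, whereas the corollary concerns conformal measures relative to $\PSL_d(\br)$ on $f(\S^1)$. When $G_2$ is a proper subgroup you must transfer the conclusion, using the same Busemann compatibility as above: the map $\psi\mapsto\psi\circ\iota_*$ carries any ambient $(\rho(\Ga),\psi)$-conformal measure on $f(\S^1)$ to a $G_2$-conformal measure on the corresponding limit set, so singularity against all $G_2$-conformal measures implies singularity against all ambient ones.
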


\subsection*{Graph-conformal measures of self-joinings and proofs of main theorems}
As discussed before, the main novelty of our approach is the introduction of higher rank conformal measures for self-joinings in this rigidity problem.
Let $G=G_1\times G_2$, and recall the self-joining group:
$$\Ga_\rho=(\id\times \rho)(\Ga) = \{ (\ga, \rho(\ga))\in G : \ga \in \Ga \}.$$
The existence of a $\rho$-boundary map $f$ implies that the limit set of $\Ga_\rho$ is of the form $\La_\rho= ( \id\times f)(\La)=\{(\xi, f(\xi)):\xi\in \La\}$,  where $\id \times f:\La\to \La_\rho$ is the diagonal embedding $\eta \mapsto (\eta, f(\eta))$.

A general higher rank conformal measure seems mysterious, but the graph structure of the self-joining group $\Ga_\rho$ allows very explicit types of conformal measures, which we call graph-conformal measures. We write $\fa=\fa_1\oplus \fa_2$ where $\fa_i=\fa\cap \Lie G_i$ for $i=1,2$. Let $\pi_i:\fa\to \fa_i$ be the projection. A basic but crucial observation is that
for a $(\Ga,\psi_1)$-conformal measure $\nu_{\Ga,\psi_1}$ on $\La$,
 its pushforward measure by $\id \times f$:
$$ ( \id\times f)_*\nu_{\Ga,\psi_1}$$
is a $(\Ga_\rho, \psi_1\circ \pi_1)$-conformal measure, and conversely,
$(\Ga_\rho, \sigma)$-conformal measure on $\La_\rho$ for a linear form $\sigma\in \fa^*$ that factors through $\fa_1$ is of such form (Proposition \ref{lem.pushforward}).
We call them {\it graph-conformal}  measures of $\Ga_\rho$.

For a given $\Ga_\rho$-conformal measure $\nu$, the essential subgroup
$\mathsf E_{\nu}(\Ga_\rho)<\fa $ for $\nu$ consists of all
  $u\in \fa$ such that for any Borel subset $B \subset \F$ with $\nu(B) > 0$ and any $\varepsilon > 0$, there exists $g\in \Gr$ such that 
$$\nu (B \cap g B \cap \{ \xi \in \F : \| \beta_{\xi}(o,g o) - u\| < \varepsilon \} ) > 0.$$

Here is our key proposition linking the higher rank conformal measures and our rigidity question (see Proposition \ref{prop.loabscont}).
\begin{prop}\label{key} 
Let $\nu_1$ be a $(\Ga,\psi_1)$-conformal 
measure on $\La$ and $\nu_2$ a $(\rho(\Ga),\psi_2)$-conformal
measure on $\La_{\rho(\Ga)}$.
If $$\mathsf E_{(\id\times f)_*\nu_1}(\Ga_\rho) \not\subset \{(u_1, u_2)\in \fa: \psi_1(u_1)=\psi_2(u_2)\},$$ then
 $$(f^{-1}\times \id )_*(\nu_2)\not\ll  (\id\times f)_*(\nu_1)\text{ and hence }
\nu_2\not \ll f_*\nu_1 .$$
\end{prop}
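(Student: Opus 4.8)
The plan is to transfer everything to the limit set $\La_\rho$ of the self-joining group and to work with the two graph-conformal measures living there. Write $\mu_1=(\id\times f)_*\nu_1$ and $\mu_2=(f^{-1}\times\id)_*\nu_2$. By Proposition \ref{lem.pushforward} and its evident analogue for the second factor, $\mu_1$ is a $(\Ga_\rho,\psi_1\circ\pi_1)$-conformal measure and $\mu_2$ is a $(\Ga_\rho,\psi_2\circ\pi_2)$-conformal measure, both supported on $\La_\rho$. The first assertion of the proposition is exactly $\mu_2\not\ll\mu_1$, so I would first reduce the second assertion to it: the second-coordinate projection $\La_\rho\to\La_{\rho(\Ga)}$ is a continuous bijection between compact Hausdorff spaces, hence a homeomorphism, and it pushes $\mu_2$ to $\nu_2$ and $\mu_1$ to $f_*\nu_1$; since a Borel isomorphism preserves absolute continuity in both directions, $\nu_2\ll f_*\nu_1$ would force $\mu_2\ll\mu_1$. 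Thus it suffices to prove $\mu_2\not\ll\mu_1$.

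Suppose for contradiction that $\mu_2\ll\mu_1$, and set $\phi=\tfrac{d\mu_2}{d\mu_1}\ge 0$. Computing $\tfrac{dg_*\mu_2}{d\mu_1}$ in two ways for each $g\in\Ga_\rho$ (once via conformality of $\mu_2$ and the chain rule, once by pushing forward $\phi\mu_1$ directly) produces a Radon--Nikodym cocycle identity: for $\mu_1$-a.e.\ $\eta$,
\begin{equation*}
\frac{\phi(\eta)}{\phi(g^{-1}\eta)}=e^{\Psi(\beta_\eta(o,go))},\qquad \Psi:=\psi_1\circ\pi_1-\psi_2\circ\pi_2\in\fa^*.
\end{equation*}
Since $\mu_2$ is a probability measure and $\phi\in L^1(\mu_1)$, the set $\{\phi>0\}$ has positive $\mu_1$-measure and $\log\phi$ is finite there; taking logarithms puts the identity into the additive form $\log\phi(\eta)-\log\phi(g^{-1}\eta)=\Psi(\beta_\eta(o,go))$, valid $\mu_1$-a.e.\ on $\{\phi>0\}\cap g^{-1}\{\phi>0\}$.

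Now I would exploit the hypothesis. Choose $u=(u_1,u_2)\in\mathsf E_{\mu_1}(\Ga_\rho)$ with $\Psi(u)=\psi_1(u_1)-\psi_2(u_2)\neq 0$. Partitioning $\R$ into intervals of length $\e'$, pick a level set $B\subset\{\phi>0\}$ of positive $\mu_1$-measure on which $\log\phi$ oscillates by less than $\e'$. Applying the defining property of the essential subgroup to this $B$ and to a small $\e>0$, there exists $g\in\Ga_\rho$ with
\begin{equation*}
\mu_1\bigl(B\cap gB\cap\{\eta:\|\beta_\eta(o,go)-u\|<\e\}\bigr)>0.
\end{equation*}
Intersecting with the $\mu_1$-co-null set where the cocycle holds for this single $g$ leaves positive measure, and on it both $\eta$ and $g^{-1}\eta$ lie in $B$, so $|\log\phi(\eta)-\log\phi(g^{-1}\eta)|<\e'$, while $\|\beta_\eta(o,go)-u\|<\e$ forces $|\Psi(\beta_\eta(o,go))-\Psi(u)|\le\|\Psi\|\,\e$. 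The cocycle identity then gives $|\Psi(u)|<\e'+\|\Psi\|\,\e$, and choosing $\e,\e'$ small enough contradicts $\Psi(u)\neq 0$.

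I expect the main obstacle to be technical bookkeeping rather than conceptual: guaranteeing that $\phi$ is positive and finite on a set of positive measure so that $\log\phi$ admits a genuine near-constant level set, and confirming that intersecting the positive-measure set produced by the essential subgroup with the chosen $g$'s co-null cocycle set still leaves positive measure. Conceptually the mechanism is clean: the essential subgroup detects exactly those displacement directions $u$ realized by $\Ga_\rho$ within sets of positive $\mu_1$-measure, and the Radon--Nikodym cocycle of any $\mu_2\ll\mu_1$ must be asymptotically flat in every such direction, whereas $\Psi(u)\neq 0$ records precisely the failure of $\psi_1\circ\pi_1$ and $\psi_2\circ\pi_2$ to agree on $u$. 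Notably, this argument never invokes ergodicity of a flow, which is what should let it bypass the rank-one restrictions inherent in the Sullivan--Tukia--Yue method.
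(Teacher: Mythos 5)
Your proposal is correct and takes essentially the same route as the paper: transfer both measures to graph-conformal measures on $\La_\rho$ (the paper's Proposition \ref{lem.pushforward} and Lemma \ref{s2}), then run a Schmidt-type Radon--Nikodym cocycle argument against the essential subgroup, which is exactly the paper's Proposition \ref{prop.loabscont}. The only difference is cosmetic: the paper fixes a band $\{r_1<\phi<r_2\}$ and uses the subgroup structure of $\mathsf E_{(\id\times f)_*\nu_1}(\Ga_\rho)$ to replace $u$ by an integer multiple making the multiplicative distortion exceed $2r_2/r_1$, whereas you shrink the band to a thin level set of $\log\phi$ and conclude pointwise; both implement the same contradiction.
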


The essential subgroup $\mathsf E_\nu(\Ga_\rho)$ is usually used as a tool
to decide the ergodicity of the corresponding Burger-Roblin measure $m^{\BR}_{\nu}$ 
(see Definition \ref{br}) for the maximal horospherical action  on $\Ga_\rho\ba G$ (\cite{Schmidt1977cocycles}, \cite{Roblin2003ergodicite}, \cite[Proposition 9.2]{lee2020invariant}).
In this paper, we use it as a tool to determine linear forms associated to
 conformal measures in the same measure class as $\nu$. In fact, recalling that $\Ga_\rho$ is Zariski dense in $G$
if and only if $\rho$ does not extend to a Lie group isomorphism $G_1\to G_2$, Theorem \ref{mmm} follows from Proposition \ref{key} and
the following dichotomy Theorem \ref{km}. 
For $\Ga$ and $\nu_\Ga$ as in Theorem \ref{mmm},
we set $$\nug=(\id\times f)_*\nu_\Ga,$$
which is the unique $(\Ga_\rho, \sigma_1)$-conformal measure where $\sigma_1(u_1, u_2)= \delta_\Ga u_1$ (Corollary \ref{cor.uniqueconformal}).
We refer to Theorem \ref{mplast} for any undefined terminologies:
\begin{theorem}\label{km} Let $\Ga, \rho$ be as in Theorem \ref{mmm}.
 In each of the two complimentary cases,
 the claims (1) to (4)  are equivalent to each other.

The first case is as follows:

 \begin{enumerate}
\item $\Ga_\rho$ is Zariski dense in $G$;
\item $\mathsf E_{\nug}(\Ga_\rho)= \fa$;
\item $m_{\nug}^{\BR}$ is $NM$-ergodic;
\item For any $(\Ga_\rho,\psi)$-conformal measure $\nu$
on $\La_\rho$ for $\psi\ne \sigma_1$, we have $[\nug]\ne [\nu]$.
\end{enumerate}

The second case is as follows:

 \begin{enumerate}
\item $\Ga_\rho$ is not Zariski dense in $G$;
\item $\fa = \R^2$ and $\mathsf E_{\nug}(\Ga_\rho)= \br  \cdot (\delta_{\rho(\Ga)}, \delta_{\Ga}) $;
\item $m_{\nug}^{\BR}$ is not $NM$-ergodic;
\item For any $(\Ga_\rho, \psi)$-conformal measure $\nu$
on $\La_\rho$ for a tangent linear form $\psi$, we have $[\nug]= [\nu]$.
\end{enumerate}

\end{theorem}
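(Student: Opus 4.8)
The plan is to treat both cases at once for a fixed triple $(\Ga,\rho,\nu_\Ga)$ as in Theorem \ref{mmm}, exploiting that the two cases are exhaustive and mutually exclusive: by the basic observation recalled in the introduction, $\Gr$ is Zariski dense in $G=G_1\times G_2$ exactly when $\rho$ does not extend to a Lie group isomorphism. The backbone of the argument is to first pin down the essential subgroup $\ess_{\nug}(\Gr)$, which gives the equivalence $(1)\Leftrightarrow(2)$, and then to deduce the ergodicity statement $(3)$ and the conformal-measure-class statement $(4)$ from it. Concretely I would prove $(1)\Leftrightarrow(2)$, the iff $(2)\Leftrightarrow(3)$, and the implication $(2)\Rightarrow(4)$ in each case; the remaining implication $(4)\Rightarrow(2)$ then comes for free, because the two versions of $(4)$ are mutually exclusive (in the non-Zariski-dense regime there are tangent forms $\psi\ne\sigma_1$ producing measures equivalent to $\nug$, whereas in the Zariski-dense regime no such $\psi$ exists), so whichever form of $(4)$ holds already selects the correct case.

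\emph{Computing $\ess_{\nug}(\Gr)$.} Since $\op{rank}G_1=1$ we have $\fa_1=\br$ and $\fa=\br\oplus\fa_2$; in the second case $\op{rank}G_2=1$ as well, for otherwise a non-Zariski-dense $\Gr$ would force an isomorphism between groups of different rank, so there $\fa=\br^2$. The key structural input I would use is the dictionary identifying $\ess_{\nug}(\Gr)$ with the closed subgroup of $\fa$ generated by the $\fa$-valued Jordan projections of loxodromic elements of $\Gr$ (the $\fa$-periods of closed orbits); this is legitimate precisely because $\Ga$ is of divergence type and $\nug$ is the graph conformal measure built from the unique $\nu_\Ga$. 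The easy direction $\neg(1)\Rightarrow\neg(2)$ is then immediate: if $\Gr$ is not Zariski dense, $\rho$ extends to an isomorphism $\Phi$, the two length spectra are proportional with ratio $\ka=\delta_\Ga/\delta_{\rho(\Ga)}$, so every Jordan projection lies on $\br\cdot(1,\ka)=\br\cdot(\delta_{\rho(\Ga)},\delta_\Ga)$, which is therefore all of $\ess_{\nug}(\Gr)$, yielding $(2)$ of the second case.

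\emph{The main obstacle, $(1)\Rightarrow(2)$.} Conversely, assuming $\Gr$ Zariski dense, I must show $\ess_{\nug}(\Gr)=\fa$, i.e. rule out the Jordan projections being trapped on a proper line. The plan is to invoke Benoist's theorem that a Zariski dense subgroup of a semisimple group has length spectrum generating a dense subgroup of $\fa$ (equivalently, its limit cone has nonempty interior), applied to $\Gr<G_1\times G_2$; this forces $\ess_{\nug}(\Gr)=\fa$ and gives $(2)$ of the first case. I expect this to be the crux of the whole theorem: it is the one place where the genuinely higher-rank geometry of the self-joining enters, and the delicate point is to verify that the essential-subgroup/length-spectrum dictionary indeed applies to the \emph{graph} conformal measure $\nug$ — which carries a degenerate linear form $\sigma_1$ factoring through $\fa_1$ — rather than to a nondegenerate higher-rank Patterson--Sullivan density.

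\emph{From $(2)$ to $(3)$ and $(4)$.} The equivalence $(2)\Leftrightarrow(3)$ is the standard cocycle-reduction criterion: the maximal horospherical ($NM$-)action on $(\Gr\ba G,m^{\BR}_{\nug})$ is ergodic iff the $\fa$-valued Busemann cocycle has full essential subgroup $\ess_{\nug}(\Gr)=\fa$, in the form of \cite{Schmidt1977cocycles}, \cite{Roblin2003ergodicite} and \cite[Proposition 9.2]{lee2020invariant}; thus $\ess=\fa$ gives $NM$-ergodicity in the first case, while $\ess=\br\cdot(\delta_{\rho(\Ga)},\delta_\Ga)\subsetneq\fa$ gives non-ergodicity in the second. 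For $(2)\Rightarrow(4)$ I would use the coboundary principle behind Proposition \ref{key}: if a $(\Gr,\psi)$-conformal measure $\nu$ on $\La_\rho$ satisfies $[\nu]=[\nug]$, then its Radon--Nikodym density against $\nug$ exhibits $(\psi-\sigma_1)\circ\beta$ as a measurable coboundary, forcing $\psi-\sigma_1$ to vanish on $\ess_{\nug}(\Gr)$. In the first case $\ess=\fa$ then forces $\psi=\sigma_1$, so $\psi\ne\sigma_1$ implies $[\nu]\ne[\nug]$; in the second case $\psi-\sigma_1$ need only vanish on $\br\cdot(\delta_{\rho(\Ga)},\delta_\Ga)$, which is exactly the tangency condition, and conversely for any tangent $\psi$ the cocycle $(\psi-\sigma_1)\circ\beta$ has trivial essential values and is a coboundary, giving $[\nu]=[\nug]$ by uniqueness of the conformal measure in its class. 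This supplies all four implications in each case.
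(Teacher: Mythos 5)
Your skeleton matches the paper's (essential subgroup as the pivot, Schmidt-type criterion for $(2)\Leftrightarrow(3)$, Proposition \ref{prop.loabscont} for $(2)\Rightarrow(4)$ in the first case), but the three steps you treat as available inputs are exactly the paper's main technical content, and as stated they are gaps. The most serious one is your $(1)\Rightarrow(2)$: there is no ``dictionary'' theorem identifying $\ess_{\nug}(\Gr)$ with the closed subgroup of $\fa$ generated by the Jordan projections $\lambda(\Gr)$, and Benoist's density theorem says nothing about essential subgroups of a particular conformal measure. What the paper actually proves (Theorem \ref{thm.ess}, via Proposition \ref{prop.ess}) is only the inclusion that sufficiently large Jordan projections, those with $\sigma_1(\lambda(\ga_0))>1+\log 3N_0$, lie in $\ess_{\nug}(\Gr)$; this occupies Sections \ref{swM}--\ref{sec.ess} (full $\nug$-measure of the weak-Myrberg limit set, which is precisely where the hypothesis ``$\op{rank}G_2=1$ or $f$ is a continuous extension'' is consumed, plus shadow and covering lemmas for the degenerate metric $d_p$ and a density-point argument), and only then does Benoist-type density (Lemma \ref{finite}, which itself needs a Schottky-subgroup trick because the set of small Jordan projections is infinite) upgrade the inclusion to $\ess_{\nug}(\Gr)=\fa$. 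Divergence type and uniqueness of $\nu_\Ga$ do not by themselves ``legitimize'' your dictionary. Note also that the reverse inclusion of your dictionary is never proved nor used: in the second case the paper bounds $\ess_{\nug}(\Gr)$ from above by observing that $\nug$ is \emph{simultaneously} $\sigma_1$- and $\sigma_2$-conformal (since $\rho$ extends to an isometry) and applying Proposition \ref{prop.loabscont}, i.e.\ $\ess_{\nug}(\Gr)\subset\ker(\sigma_1-\sigma_2)$; a priori the essential subgroup could be strictly larger than the group generated by Jordan projections.

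Two further gaps. First, $(4)\Rightarrow(1)$ in the second case does not ``come for free'': to contradict second-case (4) when $\Gr$ is Zariski dense you must \emph{exhibit} a $(\Gr,\psi)$-conformal measure on $\La_\rho$ with $\psi$ tangent and $\psi\neq\sigma_1$ --- otherwise both versions of (4) can hold simultaneously (the second one vacuously), and your mutual-exclusivity argument collapses. This existence is the content of Section \ref{sec.tangent} (Theorem \ref{thm.manycritical}, a genuine convexity argument producing infinitely many tangent forms at interior directions, combined with Quint's construction; Corollary \ref{cor.manyconformal}), which your proposal never supplies. Second, your second-case argument for (4) (``for any tangent $\psi$ the cocycle $(\psi-\sigma_1)\circ\beta$ has trivial essential values and is a coboundary, giving $[\nu]=[\nug]$'') is unsound: vanishing of $\psi-\sigma_1$ on $\ess_{\nug}(\Gr)$ is only a \emph{necessary} condition for absolute continuity (Proposition \ref{prop.loabscont}), never a sufficient one, and the essential values of the scalar cocycle $(\psi-\sigma_1)\circ\beta$ need not coincide with the image of the essential subgroup of the $\fa$-valued cocycle; moreover ``uniqueness of the conformal measure in its class'' is circular, since uniqueness of $(\Gr,\psi)$-conformal measures is not known a priori. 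The paper instead projects: it shows $\pi_*\nu_\psi$ is a $\delta_\Ga$-dimensional $\Ga$-conformal measure on $\La$ (using $\psi=(1-c)\sigma_1+c\sigma_2$ and the isometry), invokes uniqueness of $\nu_\Ga$ from divergence type (Theorem \ref{thm.uniquepsdiv}), and recovers $\nu_\psi=(\id\times f)_*\pi_*\nu_\psi\in[\nug]$.
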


\begin{Rmk}  If $\rho$ is an Anosov representation, the essential subgroup for {\it any} $\Ga_\rho$-conformal measure on $\La_\rho$
is equal to $\fa$; this is a special case of  \cite[Proposition 10.2]{lee2020invariant}. 
In our general setting, we do not know
the size of a general essential subgroup. However for the essential subgroup
corresponding to the graph-conformal measure, we
are able to make an extensive use of the graph structures of $\Ga_\rho$
and $\La_\rho$
to prove $\mathsf E_{\nug}(\Ga_\rho)=\fa $.\end{Rmk} 

\subsection*{Cannon-Thurston map}
Let $\cal M$ be a closed hyperbolic 3-manifold  that fibers over a circle with
fiber a closed orientable  surface $S$.
Let $\Ga < \PSL_2(\R)$ be a uniform lattice such that $\Ga \ba \H^2$ is homeomorphic to $S$ by $\phi$. Let $\rho:\Ga\to \PSL_2(\bc)$ be the holonomy representation induced by
$\phi:\Ga\ba \bH^2\to S\subset \cal M$. Let $F:\bH^2\to \bH^3$ be a lift of $\phi$.
Cannon and Thurston \cite[Section 4]{Cannon2007peano} proved that $F$ continuously extends to the $\rho$-boundary map $f : \partial \H^2 \to \partial \H^3$, which is surjective but not injective. The map $f$ is called a Cannon-Thurston map.
Mj generalized this result for a general lattice $\Ga < \PSL_2(\R)$ (see \cite[Theorem 8.6]{Mj2014cannon}, \cite{Mj2018cannon}). 
Indeed, Theorem\ref{km} (see Theorem \ref{mplast}) applies to the Cannon-Thurston map, since we do not require $f$ to be injective in our proof of Theorem \ref{mplast}.
 Since  $\PSL_2(\br)$ is not isomorphic to $\PSL_2(\c)$, $\Ga_\rho$ is Zariski dense in $G=\PSL_2(\br)\times \PSL_2(\c)$.
Hence Theorem\ref{km} says that the Burger-Roblin measure $m_{\nug}^{\BR}$ is $NM$-ergodic where $\nug = (\id \times f)_* \op{Leb}_{\S^1}$, where $N<G$ is the product of strict upper triangular subgroups and 
$M=\{e\}\times \{\text{diag}(e^{i\theta}, e^{-i\theta})\}$. Therefore we have:
\begin{cor}\label{cannon} For Lebesgue almost all $\xi\in \S^1$,
the orbit $[g]NM$ with  $gP = (\xi, f(\xi))$ is dense
in the space $\{[h]\in \Ga_\rho\ba (\PSL_2(\br)\times \PSL_2(\c)): hP\in \La_\rho\}$.
\end{cor}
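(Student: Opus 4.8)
The plan is to convert the $NM$-ergodicity of the Burger-Roblin measure $m_{\nug}^{\BR}$ into the asserted genericity of dense orbits, and then to push the resulting full-measure subset of $\Ga_\rho\ba G$ down to $\S^1$ through the Cannon-Thurston boundary map. This ergodicity is already in hand: since $\PSL_2(\br)$ and $\PSL_2(\c)$ are non-isomorphic, $\rho$ does not extend to a Lie group isomorphism, so $\Ga_\rho$ is Zariski dense in $G=\PSL_2(\br)\times\PSL_2(\c)$ and we are in the first case of Theorem \ref{km}, whose implication (1)$\Leftrightarrow$(3) gives that $m_{\nug}^{\BR}$ is $NM$-ergodic, where $\nug=(\id\times f)_*\op{Leb}_{\S^1}$.

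Next I would pin down the support of $m_{\nug}^{\BR}$. Being the horospherical-invariant measure attached to the conformal measure $\nug$ (Definition \ref{br}), it is supported exactly on $Y:=\{[h]\in\Ga_\rho\ba G : hP\in\La_\rho\}$: the forward endpoint $hP$ is invariant along right $N$-orbits and confined to $\op{supp}\nug=\La_\rho$, while the transverse directions carry full measure. I then invoke the standard principle that an ergodic quasi-invariant Radon measure on a second countable space has almost every orbit dense in its support. Concretely, choosing a countable basis $\{U_n\}$ of $Y$ with $m_{\nug}^{\BR}(U_n)>0$, each right-saturation $U_n\cdot NM$ is $NM$-invariant, open, and of positive measure, hence co-null by ergodicity; the intersection $D:=\bigcap_n U_n\cdot NM$ is therefore co-null and consists precisely of those $[h]$ whose orbit $[h]NM$ is dense in $Y$.

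The crux is to convert ``$m_{\nug}^{\BR}$-almost every $[h]$'' into ``$\op{Leb}$-almost every $\xi$'', for which I would show that the dense-orbit property depends only on the forward endpoint. Writing $P=A\ltimes NM$, fixing $hP$ determines the orbit $[h]NM$ up to the right $A$-action; since right multiplication by $a\in A$ is a self-homeomorphism of the $A$-invariant space $Y$ carrying dense orbits to dense orbits, the set $D$ is invariant under both $NM$ and $A$, hence under $P$. Consequently $D=\pi^{-1}(E')$ for the projection $\pi:\Ga_\rho\ba G\to\Ga_\rho\ba\F$, $[g]\mapsto[gP]$, and some $E'\subset\Ga_\rho\ba\La_\rho$. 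As $\pi_*m_{\nug}^{\BR}$ lies in the measure class of the image $\ov{\nug}$ of $\nug$ in $\Ga_\rho\ba\F$, co-nullity of $D$ forces $\ov{\nug}((E')^c)=0$; and since $\nug=(\id\times f)_*\op{Leb}_{\S^1}$, the map $\xi\mapsto[(\xi,f(\xi))]$ pushes $\op{Leb}_{\S^1}$ to a measure in the class of $\ov{\nug}$, so $\{\xi:[(\xi,f(\xi))]\notin E'\}$ is Lebesgue-null. For $\xi$ outside this null set and any $g$ with $gP=(\xi,f(\xi))$ we then have $\pi([g])\in E'$, i.e.\ $[g]NM$ is dense in $Y$, which is the assertion.

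I expect the main obstacle to be precisely this final transfer. Two points demand care: first, making rigorous the $A$-invariance argument so that ``having a dense $NM$-orbit'' is genuinely a function of the forward endpoint alone, and thus descends through $\pi$; and second, confirming that the forward-endpoint projection of $m_{\nug}^{\BR}$ is absolutely continuous with respect to $\ov{\nug}$, which is exactly what allows a co-null set upstairs to map to a $\nug$-co-null set on $\Ga_\rho\ba\F$, and hence, via the Cannon-Thurston map, to a Lebesgue-co-null set of $\xi$.
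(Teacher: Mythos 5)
Your proposal is correct and takes essentially the same route as the paper: the paper likewise deduces $NM$-ergodicity of $m^{\BR}_{\nug}$ from the Zariski density of $\Ga_\rho$ (via Theorem \ref{km}, using that injectivity of $f$ is not required) and then passes directly from ergodicity to density of almost every $NM$-orbit, treating that step as standard. The details you supply — that almost every orbit of an ergodic quasi-invariant Radon measure is dense in its support, that the dense-orbit set is right $P$-invariant because $A$ normalizes $NM$, and that null sets transfer through the fibration of $m^{\BR}_{\nug}$ over $(\La_\rho,\nug)$ and hence, via $\nug=(\id\times f)_*\op{Leb}_{\S^1}$, to Lebesgue-null sets of $\xi$ — are exactly the standard facts the paper leaves implicit.
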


\subsection*{Organization} 
In section \ref{pre}, we review basic properties of Zariski dense discrete subgroups of semisimple real algebraic groups. In section \ref{sec.confess},
after recalling notions of conformal measures and essential subgroups, we discuss how the essential subgroup of a given conformal measure influences other conformal measures which are absolutely continuous with respect to $\nu$.
We also discuss the relation between essential subgroups and ergodic properties of  Burger-Roblin measures.
In section \ref{set}, we introduce the notion of graph-conformal measures for self-joining groups and explain their important role in the rigidity problem. We also prove the uniqueness of a $\rho$-boundary map.  In section \ref{sec.tangent}, we discuss tangent linear forms and show 
that self-joining subgroups admit infinitely many such forms.
In section \ref{swM}, we introduce the notion of weak-Myrberg limit set and show that it is of full measure with respect to graph-conformal measures.
In section \ref{sec.ess}, we prove that
the essential subgroups for graph-conformal measures are all of $\fa$ under the Zariski dense hypothesis on $\Ga_\rho$.
In section \ref{sec.proof}, we establish the dichotomy for the Zariski density of self-joining groups in terms of essential subgroups and singularity of conformal measures.
We then deduce the main theorems stated in the introduction.
In section \ref{sec.anosov}, we discuss general Anosov representations where
both $G_1$ and $G_2$ have no rank restrictions.

\section{Preliminaries} \label{sec.higherconf}\label{pre}
Let $G$ be a connected semisimple real algebraic group and $\fg=\op{Lie} G$ denote its Lie algebra. We fix a Cartan involution $\theta$ of $\mathfrak{g}$ and 
consider the decomposition $\mathfrak g=\mathfrak k\oplus\mathfrak{p}$, where $\fk$ and $\fp$ are the $+ 1$ and $-1$ eigenspaces of $\theta$, respectively. Denote by $K$ the maximal compact subgroup of $G$ with Lie algebra $\fk$. We also choose a maximal abelian subalgebra $\fa$ of $\mathfrak p$.
Fixing a left $G$-invariant and right $K$-invariant Riemannian metric on $G$ induces a Weyl-group invariant norm on $\mathfrak a$, which we denote by $\|\cdot\|$.  Note that the choice of this Riemannian metric induces a  left $G$-invariant metric $d$ on the symmetric space $X:=G/K$. Let $o\in X$ denote the point corresponding to the coset $[K]$. 

Let $A:=\exp \mathfrak a$ and choosing a closed positive Weyl chamber $\fa^+$ of $\fa$, set $A^+=\exp \mathfrak a^+$. Denote by $M$
the centralizer of $A$ in $K$. Set 
$N$ to be the maximal contracting horospherical subgroup for $A$: for an element
  $a$ in the interior of $A^+$,
  $N=\{g\in G: a^{-n} g a^n\to e\text{ as $n\to +\infty$}\}$. We set $ P=MAN,$  which is
a minimal parabolic subgroup of $G$. The quotient $$\F=G/P$$ is called the Furstenberg boundary of $G$, and is isomorphic to $K/M$. Two points $\xi, \eta \in \F$  are said to be in general position if
the diagonal orbit $G(\xi, \eta)$ is open in $\F\times \F$.

\begin{Def}[Busemann map] \label{buse} The Iwasawa cocycle $H: G\times \F \to \mathfrak a$ is defined as follows: for $(g, \xi)\in G\times \F$ with $\xi=[k]\in  K/M$,
$\exp H(g,\xi)$ is the $A$-component of $g k$ in the $KAN$ decomposition so that
$$gk\in K \exp (H(g, \xi)) N.$$
The Busemann map $\beta: \F\times X\times X\to\mathfrak a $ is now defined as follows: for $\xi\in \F$ and $[g], [h]\in X$,
 $$\beta_\xi ( [g], [h]):=H (g^{-1}, \xi)- H(h^{-1}, \xi).$$
\end{Def}

Observe that the Busemann map is continuous in all three variables. To ease notation, we will sometimes write  $\beta_\xi ( g, h)=\beta_\xi ( [g], [h])$. We have
$$\beta_\xi(g, h)+\beta_\xi (h, q)=\beta_\xi (g, q)\quad  \text{and}\quad 
\beta_{g\xi}(gh,gq) =\beta_\xi(h,q) $$
 for any $g, h, q \in G$ and $\xi \in \F$.

\subsection*{Jordan projection}
An element $g\in G$ is
loxodromic if $$g=h a m h^{-1}$$ for some $ a\in \inte A^+$,
$m\in M{}$ and $h\in G$. The Jordan projection of $g$
is defined to be $$\lambda(g):=\log a \in \inte \fa^+.$$ The attracting fixed point of $g$ is given by $$y_{g} := h P \in \F;$$ for any $\xi\in \cal F$ in general position with $y_{g^{-1}}$, the sequence $g^{\ell} \xi$ converges to $y_{g}$ as $\ell \to \infty$.

Let $\Delta<G$ be a discrete subgroup.  We write
$\lambda(\Delta)$ for the set of all Jordan projections of loxodromic elements of $\Delta$.
The following  result is due to Benoist \cite{Benoist2000proprietes}
(see \cite[Lemma 10.3]{lee2020invariant} for the second part).
\begin{theorem}\label{t1}
\label{dense0} 
If $\Delta<G$ is Zariski dense, then
$\lambda(\Delta)$ generates a dense subgroup of  $\fa$.
Moreover, for any finite subset $S\subset \lambda(\Delta)$,
$\lambda(\Delta)-S$ generates a dense subgroup of  $\fa$.
\end{theorem}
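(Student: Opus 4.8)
The plan is to prove Theorem \ref{t1} (Benoist's density result for Jordan projections), which asserts that for a Zariski dense discrete subgroup $\Delta < G$, the set $\lambda(\Delta)$ generates a dense subgroup of $\fa$, and moreover this density persists after removing any finite subset $S \subset \lambda(\Delta)$. The natural strategy is to first establish the group-closure statement, then upgrade to topological density, and finally handle the finite-removal refinement. Let me outline each piece.

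\textbf{Density of the group generated by $\lambda(\Delta)$.} First I would recall that Benoist's theorem produces a \emph{limit cone} $\mathcal{L}_\Delta \subset \fa^+$, defined as the closure of $\bigcup_{\gamma} \br_{\ge 0}\cdot\lambda(\gamma)$, which for Zariski dense $\Delta$ is a convex cone with nonempty interior. The key input is that the closed subgroup of $\fa$ generated by $\lambda(\Delta)$ is all of $\fa$: since $\Delta$ is Zariski dense, its Jordan projections cannot be confined to any proper closed subgroup of $\fa$. Concretely, if the group generated by $\lambda(\Delta)$ were contained in a proper subgroup $H < \fa$, this would force an algebraic constraint on the eigenvalue data of loxodromic elements, contradicting Zariski density via a standard argument tracing back to Benoist's analysis of the distribution of Jordan projections. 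The density (rather than mere discreteness-avoidance) follows because the limit cone has nonempty interior: Jordan projections accumulate in a full-dimensional cone, so the subgroup they generate is simultaneously dense in the relevant directions and spans $\fa$ over $\br$.

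\textbf{The finite-removal refinement.} The second assertion is the delicate part and I expect it to be the main obstacle. One must show that deleting finitely many vectors $S = \{\lambda(g_1),\dots,\lambda(g_k)\}$ does not destroy density of the generated subgroup. The idea is that density is a ``thick'' property: for each $g_i$ with $\lambda(g_i)\in S$, one can find infinitely many distinct loxodromic elements whose Jordan projections approximate any target direction, so that removing the finitely many exact values in $S$ leaves abundant replacements. More precisely, I would exploit that loxodromic elements in a Zariski dense group are plentiful—one can take products $g_i^n h$ for suitable auxiliary loxodromic $h$, and the approximate multiplicativity $\lambda(g^n h) \approx n\,\lambda(g) + (\text{bounded})$ generates new Jordan projections arbitrarily close to any point of the group already generated, but differing from the finitely many removed values. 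Thus $\lambda(\Delta) - S$ still generates a subgroup containing a dense subset, hence a dense subgroup.

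\textbf{Assembling the argument.} The cleanest route is to cite Benoist \cite{Benoist2000proprietes} for the first statement and \cite[Lemma 10.3]{lee2020invariant} for the second, as the excerpt already signals. If a self-contained proof were required, the main technical engine would be the fact that for Zariski dense $\Delta$ the loxodromic elements are Zariski dense in $\Delta$ and their Jordan projections fill out the interior of the limit cone densely; combined with the sub-additivity/near-additivity of $\lambda$ under multiplication, this yields both density and its robustness under finite removal. The hard step is genuinely the uniformity needed for the finite-removal part, since one must guarantee that the \emph{new} generators avoid the finite exceptional set $S$ while still approximating arbitrary directions—this requires a pigeonhole-type argument showing that the accumulation of $\lambda(\Delta)$ is too rich to be pinned down by finitely many constraints.
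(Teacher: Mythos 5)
The paper offers no proof of this statement at all: Theorem \ref{t1} is quoted with attributions, namely Benoist \cite{Benoist2000proprietes} for the first assertion and \cite[Lemma 10.3]{lee2020invariant} for the second. Your closing recommendation to cite exactly these two sources therefore coincides with what the paper does. However, since you also offer self-contained sketches, those should be held to the standard of an actual proof, and there they fall short in two concrete ways.

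First, your argument for the first assertion is not valid: the fact that the limit cone $\mathcal{L}_\Delta$ has nonempty interior does \emph{not} imply that the subgroup generated by $\lambda(\Delta)$ is dense in $\fa$. A set of vectors spanning a full-dimensional cone can perfectly well generate a discrete subgroup (e.g.\ a finite set generating the lattice $\Z^{\dim \fa}$ spans such a cone). Density of the group generated by the Jordan projections is a genuinely deep theorem of Benoist, proved by a careful analysis of the spectra of products of loxodromic elements in a Zariski dense group; it cannot be recovered from full-dimensionality of $\mathcal{L}_\Delta$, and your appeal to ``an algebraic constraint contradicting Zariski density'' is a restatement of the theorem rather than an argument. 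Second, for the finite-removal part, your route through $\lambda(g^n h) \approx n\lambda(g) + O(1)$ silently invokes asymptotic additivity of Jordan projections, which is false for arbitrary pairs ($\lambda$ is neither continuous nor near-additive in general) and holds only for elements in general position --- itself a nontrivial result of Benoist that you do not justify. It is also unnecessary. The standard argument, and the one behind \cite[Lemma 10.3]{lee2020invariant}, uses only the exact identity $\lambda(g^n) = n\lambda(g)$ for loxodromic $g$: since $S$ is finite and the points $n\lambda(g)$, $n \ge 1$, are pairwise distinct, one can choose $n$ with $n\lambda(g) \notin S$ and $(n+1)\lambda(g) \notin S$; then the subgroup generated by $\lambda(\Delta) - S$ contains $(n+1)\lambda(g) - n\lambda(g) = \lambda(g)$ for \emph{every} loxodromic $g \in \Delta$, hence contains the full group generated by $\lambda(\Delta)$, and the second assertion reduces to the first. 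So the ``delicate uniformity'' you anticipate is in fact a two-line pigeonhole observation; the genuinely hard content of the theorem lies entirely in the first assertion, which must simply be cited.
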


The limit cone $\L_{\Delta}\subset \fa^+$ is defined as the smallest closed
cone containing the Jordan projection of $\Delta$. If $\Delta$ is Zariski dense, $\L_\Delta$ is a convex cone with non-empty interior \cite{Benoist1997proprietes}.

\subsection*{Limit set}
Let $\Pi$ denote the set of all simple roots of $\mathfrak g  = \Lie G$ with respect to $\fa^+$.
 We say that a sequence $g_\ell \to \infty$ regularly in $G$  if  $\alpha(\mu(g_\ell)) \to\infty$ as $\ell\to\infty$ for all $\alpha\in\Pi$.

\begin{Def}\label{conv} A sequence $g_\ell \in G$, or $g_\ell o\in X$,
is said to converge to $\xi\in\cal F$ if
\begin{itemize}
\item $g_\ell \to\infty$ regularly in $G$;
\item 
$\lim\limits_{\ell \to\infty}\kappa_1(g_\ell)P=\xi$ where $\kappa_1(g_\ell)\in K$ is an element such that
$g_\ell\in \kappa_1(g_\ell) A^+K$.
\end{itemize}
In this case, we write $\xi=\lim_{\ell\to \infty} g_\ell =\lim_{\ell\to \infty} g_\ell o$.
\end{Def}

 \begin{Def}[Limit set] \label{def.limitset}
 The limit set $\La_{\Delta}\subset \F$ is defined as the set of all accumulation points
 of $\Delta (o)$ in $\cal F$, that is,
 $$\La_{\Delta}=\{\lim_{\ell \to \infty} g_\ell o \in \F:
 g_\ell \in \Delta\} .$$
 \end{Def}
This may be an empty set for a general discrete subgroup. However,
we have the following result of Benoist for Zariski dense subgroups (\cite[Section 3.6]{Benoist1997proprietes}, \cite[Lemma 2.15]{lee2020invariant}):

\begin{theorem}\label{t2}
\label{dense} 
If $\Delta<G$ is Zariski dense,
 then $\Lambda_\Delta$ is the unique $\Delta$-minimal subset of $\F$ and
the set of all attracting fixed points of loxodromic elements of $ \Delta$ is dense in
$\Lambda_\Delta$.
\end{theorem}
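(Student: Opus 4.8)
The statement is Benoist's theorem; here is the plan. Throughout write $E\subset\F$ for the set of attracting fixed points of loxodromic elements of $\Delta$, and recall $y_g=hP$ for a loxodromic $g=ham h^{-1}$ with $a\in\inte A^+$.

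First I would show $E\subseteq\La_\Delta$ and that $E$ is $\Delta$-invariant. For such a loxodromic $g$, the powers satisfy $g^\ell o=ha^\ell m^\ell h^{-1}o$; since $\lambda(g)=\log a\in\inte\fa^+$, every simple root in $\Pi$ is strictly positive on $\lambda(g)$, so $g^\ell\to\infty$ regularly while $\kappa_1(g^\ell)P\to hP=y_g$. Thus $y_g=\lim_\ell g^\ell o\in\La_\Delta$ by Definition \ref{conv}. Invariance is immediate from $\gamma y_g=y_{\gamma g\gamma^{-1}}$ for $\gamma\in\Delta$, so $\cl{E}$ is a closed $\Delta$-invariant subset of the closed $\Delta$-invariant set $\La_\Delta$.

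The core step is that every nonempty closed $\Delta$-invariant set $C\subseteq\F$ contains $E$; this simultaneously yields minimality of $\cl{E}$ and its uniqueness as the minimal set. Fix loxodromic $g$ with repelling point $y_{g^{-1}}$, and let $Z\subset\F$ be the proper Zariski-closed locus of points not in general position with $y_{g^{-1}}$. The decisive input is that every $\Delta$-orbit in $\F$ is Zariski dense: for $\xi\in\F$ the Zariski closure of $\Delta\xi$ is stabilized by $\cl{\Delta}^{\mathrm{Zar}}=G$, hence equals $\F$ by transitivity of $G$ on $\F$. Consequently the nonempty $\Delta$-invariant set $C$ is Zariski dense and cannot lie in $Z$, so it contains some $\xi$ in general position with $y_{g^{-1}}$; then $g^\ell\xi\to y_g$ forces $y_g\in C$. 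As $g$ was arbitrary, $E\subseteq C$, whence $\cl{E}\subseteq C$. Taking $C=\cl{E}$ shows $\cl{E}$ is minimal, and the inclusion for all such $C$ shows it is the unique minimal set; in particular $\cl{E}\subseteq\La_\Delta$. (Nonemptiness of $E$ itself, i.e.\ the existence of loxodromic elements, is guaranteed by Zariski density, as in Theorem \ref{t1}.)

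It remains to prove $\La_\Delta\subseteq\cl{E}$, the step I expect to be the main obstacle. Given $\eta=\lim_\ell g_\ell o$ with $g_\ell\to\infty$ regularly, I would produce loxodromic elements whose attracting fixed points converge to $\eta$. Writing $g_\ell=\kappa_1(g_\ell)\exp\mu(g_\ell)\kappa_2(g_\ell)$, the regular divergence makes $g_\ell$ increasingly contracting toward $\kappa_1(g_\ell)P\to\eta$; the standard proximality estimate then shows that, once the Cartan attracting and repelling directions of a bounded perturbation $\gamma g_\ell\gamma'$ are in general position, this perturbation is loxodromic for all large $\ell$ with attracting fixed point converging to $\gamma\eta$. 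The general-position requirement is arranged using the Zariski density of orbits to choose fixed $\gamma,\gamma'\in\Delta$. This yields $\gamma\eta\in\cl{E}$, and the $\Delta$-invariance of $\cl{E}$ gives $\eta\in\cl{E}$. The delicate point throughout is precisely this uniform proximality/contraction estimate, which converts regular divergence into genuine loxodromic dynamics with controlled fixed points; everything else reduces to the orbit-Zariski-density observation together with the contraction relation $g^\ell\xi\to y_g$.
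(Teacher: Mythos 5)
The paper itself contains no proof of this statement: Theorem \ref{t2} is quoted from Benoist \cite[Section 3.6]{Benoist1997proprietes} and \cite[Lemma 2.15]{lee2020invariant}, so there is no in-paper argument to compare against; your outline in effect reconstructs the standard proof from those sources, and its logic is sound. In particular, the orbit-closure argument correctly shows that every $\Delta$-orbit (hence every nonempty closed $\Delta$-invariant set) is Zariski dense, which combined with the contraction $g^\ell\xi\to y_g$ for $\xi$ in general position with $y_{g^{-1}}$ yields that every nonempty closed $\Delta$-invariant set contains all attracting fixed points, giving both minimality and uniqueness of $\cl{E}$; and the perturbation strategy for the remaining inclusion $\La_\Delta\subset\cl{E}$ is the right one. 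The one substantive caveat is that the ``standard proximality estimate'' you invoke --- that $\gamma g_\ell\gamma'$ is eventually loxodromic with attracting fixed point tending to $\gamma\eta$ once the limiting Cartan attracting and repelling flags are in general position --- is precisely the technical core of Benoist's proof (an Abels--Margulis--Soifer/Tits-type contraction lemma, including the fact that an element acting proximally on all of $\F$ is loxodromic), and it remains a black box in your write-up; a self-contained proof would have to establish it. Two minor points: before choosing $\gamma,\gamma'$ by Zariski density of orbits you must pass to a subsequence so that the repelling flags $\kappa_2(g_\ell)^{-1}w_0P$ converge in $\F$ (only the attracting flags are known to converge, to $\eta$), and a one-sided perturbation $\gamma g_\ell$ already suffices, since general position of $\gamma\eta$ with the limiting repelling flag can be arranged by the density of $\Delta\eta$ alone.
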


For each $g\in G$, there exists a unique element $\mu(g)\in \mathfrak a^+$, called the Cartan projection of $g$, such that
\begin{equation*}
g\in K\exp(\mu(g))K.
\end{equation*}

 \begin{definition}[$\fa^+$-valued distance]\label{under}\rm
We define $\underline a : X\times X\to\mathfrak a^+$ by 
$$\underline a(p,q):=\mu(g^{-1}h )$$
where $p=g(o)$ and $q=h(o)$.
\end{definition}

\subsection*{Growth indicator function}
Following Quint \cite{Quint2002divergence}, the growth indicator function $\psi_{\Delta} : \fa \to \R \cup \{ -\infty \}$ is defined as follows: for any cone $\cal C\subset \fa$, let
$\tau_{\C}$ denote the abscissa of convergence of the series $\sum_{g \in \Delta, \mu(g) \in \C} e^{-s \|\mu(g) \|}$.
For $u\in \fa-\{0\}$, we define
$$\psi_{\Delta} (u) = \| u \| \inf_{u \in \C} \tau_{\C}$$ where   the infimum is taken over all open cones $\C$ containing $u$. We also set $\psi_{\Delta}(0) = 0$. 
It is immediate from the definition that $\psi_{\Delta}$ is homogeneous of degree one.
Quint \cite[Theorem 1.1.1]{Quint2002divergence} showed that $\psi_{\Delta}$ is a concave and
upper semi-continuous function satisfying 
$$\L_\Delta=\{\psi_{\Delta}\ge 0\} .$$
Moreover $\psi_{\Delta} > 0$ on $\inte \L_{\Delta}$  and $\psi_\Delta =-\infty$ outside $\L_\Delta$.

  \section{Conformal measures and essential subgroups} \label{sec.confess}
Let $G$ be a connected semisimple real algebraic group. We continue notations $X, \F, o$, etc from section \ref{sec.higherconf}. Let $\Delta<G$ be a discrete subgroup.

\begin{Def}[Conformal measures] \label{ps} 
A Borel probability measure $\nu_o$ on $\F$ is called a $\Delta$-conformal measure (with respect to $o$)
if there exists a linear form $\psi\in \fa^*$ such that for all $\eta\in \F$ and $g\in \Delta$,
\be \label{co}
{ dg_* \nu_o \over d \nu_o}(\eta) = e^{\psi( \beta_{\eta}(o, g o))} .\ee 
In this case, we say $\nu_o$ is a $(\Delta,\psi)$-conformal measure.
For $x \in X$, $d\nu_x (\eta)= e^{\psi( \beta_{\eta}(o, x))}d\nu_o (\eta) $
is a $(\Delta, \psi)$-conformal measure with respect to $x$.
\end{Def}

The set of values $\{\beta_{\eta}(o, go)\in \fa: g\in \Delta, \eta\in \text{supp}(\nu_o)\} $ may not be large enough to determine the linear form to which $\nu_o$ is associated; in general, there may be multiple linear forms associated to the same conformal measure. This phenomenon occurs when $\dim \fa>1$ and hence definitely a higher rank feature which does not arise in rank one situation.

 \begin{lem}\label{ergg}
Let $\nu_1$ and $\nu_2$ be $\Delta$-quasi-invariant measures on $\F$ such that $\nu_1\ll \nu_2$. If $\nu_2$ is $\Delta$-ergodic, then
$[\nu_1]=[\nu_2].$
\end{lem}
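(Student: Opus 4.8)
The plan is to upgrade the hypothesis $\nu_1 \ll \nu_2$ to the two-sided relation by proving $\nu_2 \ll \nu_1$; together these give $[\nu_1]=[\nu_2]$. Since $\nu_1 \ll \nu_2$, the Radon--Nikodym derivative $\phi := \tfrac{d\nu_1}{d\nu_2}$ exists as a nonnegative $\nu_2$-measurable function, well-defined up to a $\nu_2$-null set. I would reduce everything to showing that the set $E := \{\eta \in \F : \phi(\eta) > 0\}$ has full $\nu_2$-measure: once $\phi>0$ holds $\nu_2$-almost everywhere, any $\nu_1$-null Borel set $B$ satisfies $\int_B \phi \, d\nu_2 = 0$ with a $\nu_2$-a.e.\ positive integrand, forcing $\nu_2(B)=0$, i.e.\ $\nu_2 \ll \nu_1$.

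First I would record the easy half. As $\phi$ vanishes off $E$, we have $\nu_1(E^c)=\int_{E^c}\phi\,d\nu_2=0$, so $\nu_1(E)=1$; and since $\nu_1 \ll \nu_2$ is a probability measure, $\nu_2(E)=0$ would force $\nu_1(E)=0$, a contradiction. Hence $\nu_2(E)>0$. The heart of the argument is to show that $E$ is $\Delta$-invariant modulo $\nu_2$-null sets, that is, $\nu_2(gE \,\triangle\, E)=0$ for every $g \in \Delta$. For this I would compute the derivative of $g_*\nu_1$ against $\nu_2$ in two ways. A direct change of variables applied to $\nu_1=\phi\,\nu_2$ yields
\[
\frac{dg_*\nu_1}{d\nu_2} = (\phi\circ g^{-1})\cdot \frac{dg_*\nu_2}{d\nu_2},
\]
and because quasi-invariance of $\nu_2$ makes $\tfrac{dg_*\nu_2}{d\nu_2}>0$ hold $\nu_2$-a.e., the set on which this derivative is positive equals $gE$ modulo $\nu_2$-null sets. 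On the other hand, quasi-invariance of $\nu_1$ gives $g_*\nu_1 \sim \nu_1$, so $g_*\nu_1$ and $\nu_1$ have the same positivity set for their densities against $\nu_2$, namely $E$. Comparing the two descriptions gives $gE = E$ modulo $\nu_2$-null sets.

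With $E$ now known to be $\Delta$-invariant mod $\nu_2$-null, the final step simply invokes the $\Delta$-ergodicity of $\nu_2$: the value $\nu_2(E)$ must be $0$ or $1$. Since $\nu_2(E)>0$ by the first step, we conclude $\nu_2(E)=1$, hence $\phi>0$ $\nu_2$-a.e., which delivers $\nu_2 \ll \nu_1$ as explained, completing the proof.

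I expect the main obstacle to be the careful bookkeeping in the two-way computation of $g_*\nu_1$ used to establish the $\Delta$-invariance of $E$. All the content lies in correctly tracking the Radon--Nikodym cocycles and in verifying that quasi-invariance renders the relevant derivatives strictly positive $\nu_2$-almost everywhere, so that the notion of ``positivity set of the density against $\nu_2$'' transforms functorially under the $\Delta$-action; everything else is a routine application of ergodicity.
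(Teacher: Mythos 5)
Your proof is correct, but it takes a genuinely different route from the paper's. The paper argues directly with null sets and never introduces a Radon--Nikodym derivative: given a $\nu_1$-null Borel set $B$, quasi-invariance of $\nu_1$ (together with countability of the discrete group $\Delta$) makes the saturation $\Delta B=\bigcup_{g\in\Delta}gB$ a $\nu_1$-null and \emph{strictly} $\Delta$-invariant set; ergodicity of $\nu_2$ gives that $\Delta B$ is $\nu_2$-null or $\nu_2$-conull, and the conull alternative is impossible because $\nu_1\ll\nu_2$ would then force $\nu_1(\Delta B)>0$, contradicting $\nu_1(\Delta B)=0$. Hence $\nu_2(B)=0$, i.e.\ $\nu_2\ll\nu_1$. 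You instead apply ergodicity once, to the canonical set $E=\{d\nu_1/d\nu_2>0\}$, after the cocycle computation showing $\nu_2(gE\,\triangle\,E)=0$ for all $g\in\Delta$; that computation is carried out correctly. The one point you pass over is that $E$ is invariant only \emph{modulo} $\nu_2$-null sets, so you must either take the formulation of ergodicity that applies to almost-invariant sets or upgrade $E$ to a strictly invariant set (standard for countable $\Delta$: the $\Delta$-saturation of $\bigcup_{g\in\Delta}(gE\,\triangle\,E)$ is null by quasi-invariance, and adjoining it to $E$ produces a strictly invariant set agreeing with $E$ up to a null set). The paper's argument sidesteps this issue entirely since $\Delta B$ is strictly invariant by construction. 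What your route buys is a canonical description of the obstruction to equivalence, namely the zero locus of the density, which is the kind of information that persists in more refined situations; what the paper's route buys is brevity and elementariness. Finally, your normalization $\nu_1(E)=1$ silently assumes the measures are probability measures; the statement only says measures, and all you actually need is $\nu_1\neq 0$.
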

 \begin{proof} 
 Let $B \subset \F$ be a $\nu_1$-null Borel subset. Since $\nu_1$
 is  $\Delta$-quasi-invariance, we have  $\nu_{1}(\Delta B) = 0.$ 
 Since $\nu_2$ is $\Delta$-ergodic and $\nu_1\ll \nu_2$, 
 it follows that $\nu_2(\Delta B)=0$.   Hence $\nu_2\ll \nu_1$.
 \end{proof}

\subsection*{Essential subgroups}
\begin{definition}[Essential subgroup for $\nu$]\label{ess}
   For a $\Delta$-conformal measure $\nu$ with respect to $o$, we define
   the subset $\mathsf E_{\nu}(\Delta) \subset \fa$ as follows:
  $u\in \mathsf E_{\nu}(\Delta)$  if for any Borel subset $B \subset \F$ with $\nu(B) > 0$ and any $\varepsilon > 0$, there exists $g\in \Delta$ such that 
\be \label{eqn.essdef}
\nu(B \cap g B \cap \{ \xi \in \F : \| \beta_{\xi}(o, g o) - u\| < \varepsilon \} ) > 0.
\ee
\end{definition}

It is easy to see that $\ess_{\nu}(\Delta)$ is a closed subgroup of the vector group $\fa$.
We call $\ess_{\nu}(\Delta)$ the essential subgroup for $\nu$.
In rank one case, this subgroup
was defined in (\cite{Schmidt1977cocycles}, see also \cite{Roblin2003ergodicite}) in order to study the ergodic properties of horospherical actions. The higher rank analogue given as above was studied in \cite{lee2020invariant} in relation with ergodicity of the maximal horospherical action with respect to the higher rank Burger-Roblin measures which we now recall.

\subsection*{Burger-Roblin measures in higher rank}

For $p= m(\exp a) n \in MAN$, set $H(p)=  a\in \fa$; in the notation of
Definition \ref{buse}, $H(p)=H(p, [P])=\beta_{[P]}(p^{-1}, e)$. 
\begin{Def}[Burger-Roblin measures] \label{br} Given a $(\Delta, \psi)$-conformal measure $\nu$ on $\F$ (with respect to $o$), we define  $\tilde m^{\BR}_\nu$ on $G$ as follows: for $\Phi\in C_c(G),$
$$\tilde m_\nu^{\BR}(\Phi)= \int_{kp\in KP} \Phi (k p) e^{\psi (H(p))} \,  d\nu(kP) dp $$
where  $dp$ denotes the left Haar measure on $P$. By the $\Delta$-conformal property of $\nu$ \eqref{co},  $\tilde m^{\BR}_\nu$  is left $\Delta$-invariant and hence induces a locally finite  measure on $\Delta\ba G$ \cite{edwards2020anosov}, which we denote by $m^{\BR}_\nu$. We call this measure
the Burger-Roblin (or BR) measure associated to $\nu$.
\end{Def}

The BR measures on $\Delta\ba G$ are $NM$-invariant and $A$-quasi-invariant measures (\cite[Lemma 3.9]{edwards2020anosov}).
The action of $NM$ on $\Ga_\rho\ba G$ will be referred to as a maximal horospherical action.
The $NM$-ergodicity of the BR-measure $m_\nu^{\BR}$ is
directly related to the size of the essential subgroup $\ess_{\nu}(\Delta)$ by the following higher rank version of a theorem of Schmidt (\cite{Schmidt1977cocycles}, see also \cite[Proposition 2.1]{Roblin2003ergodicite}).
\begin{prop}[{\cite[Proposition 9.2]{lee2020invariant}}] \label{prop.lo}  For any 
   $\Delta$-conformal ergodic measure $\nu$ on $\F$, 
   we have
    $$\ess_{\nu}(\Delta) = \fa  \mbox{ if and only if } (\Delta\ba G, m^{\BR}_\nu) \mbox{ is } NM\mbox{-ergodic.}$$
\end{prop}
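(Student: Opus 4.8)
The plan is to realize the maximal horospherical system $(\Delta\ba G,\, m^{\BR}_\nu,\, NM)$ as a skew-product suspension of the boundary action $(\F,\nu,\Delta)$ by the Busemann cocycle, and then to invoke Schmidt's theory of essential values of cocycles, for which the condition $\ess_\nu(\Delta)=\fa$ is exactly the statement that the cocycle is maximally nondegenerate. First I would record the relevant geometry of the $NM$-fibration. Since $M$ centralizes $A$ and $A$ normalizes $N$, one checks that $NM\trianglelefteq P$ with $P/NM\cong A$, so writing $g=kp$ with $k\in K$, $p\in P$, the map $g\mapsto (gP,\,H(p))$ with $H(p)$ the $\fa$-component of Definition \ref{buse} is well defined (the $M$-ambiguity in $g=kp$ leaves both coordinates unchanged) and right $NM$-invariant; it identifies $G/NM$ with $\F\times\fa$. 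A direct Iwasawa computation then shows that the residual left $\Delta$-action takes the skew-product form
\[
\gamma\cdot(\xi,v)=\bigl(\gamma\xi,\; v+H(\gamma,\xi)\bigr),
\]
where $H(\gamma,\xi)$ is the Iwasawa cocycle of Definition \ref{buse}, related to the Busemann cocycle $\gamma\mapsto\beta_\cdot(o,\gamma o)$ of Definition \ref{ess} through the identity $\beta_\eta(o,\gamma o)=H(\gamma,\gamma^{-1}\eta)$.

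Next I would identify the measures. From the defining formula of Definition \ref{br}, in the coordinates $g=kp$ the measure $\tilde m^{\BR}_\nu$ disintegrates as $d\nu(\xi)$ along $\F$, times $NM$-Haar along each orbit, times $e^{\psi(H(p))}\,da$ in the $A$-direction. As $H$ is constant on $NM$-orbits, the transverse measure induced on $G/NM\cong\F\times\fa$ is $d\nu(\xi)\,e^{\psi(v)}\,dv$, which lies in the same measure class as $\nu\otimes\op{Leb}_\fa$. Using the $(\Delta,\psi)$-conformality \eqref{co} together with $\beta_\eta(o,\gamma o)=H(\gamma,\gamma^{-1}\eta)$, one checks that $\nu\otimes e^{\psi(v)}dv$ is genuinely $\Delta$-invariant for the skew product above; this is precisely the calculation that underlies the left $\Delta$-invariance of $\tilde m^{\BR}_\nu$ noted after Definition \ref{br}. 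Since $NM$ acts transitively, hence ergodically, on each of its orbits, an $NM$-invariant measurable function on $\Delta\ba G$ is the same datum as a $\Delta$-invariant measurable function on $(\F\times\fa,\ \nu\otimes e^{\psi(v)}dv)$. Therefore $m^{\BR}_\nu$ is $NM$-ergodic if and only if the skew-product $\Delta$-action on $(\F\times\fa,\ \nu\otimes\op{Leb}_\fa)$ is ergodic.

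Finally I would close the argument with Schmidt's theorem. Comparing Definition \ref{ess} with Schmidt's notion of an essential value shows that $\ess_\nu(\Delta)$ is exactly the closed group of essential values of the cocycle $H$ for the $\Delta$-ergodic base $(\F,\nu)$ (the conventions $B\cap gB$ versus $B\cap g^{-1}B$ and the sign of the cocycle only reflect the subgroup symmetry $u\leftrightarrow -u$); here the $\Delta$-ergodicity of $\nu$ is exactly the standing hypothesis on $\nu$. Schmidt's cocycle theorem then asserts that the skew product over $\fa$ is ergodic precisely when this essential-value group is all of $\fa$. Combining this with the previous paragraph yields that $m^{\BR}_\nu$ is $NM$-ergodic if and only if $\ess_\nu(\Delta)=\fa$.

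The main obstacle is the middle step: making the measurable identification of the $NM$-system with the skew product fully rigorous. In particular, I expect the care to lie in the disintegration of $m^{\BR}_\nu$ along $NM$-orbits, the verification that the transverse measure equals $\nu\otimes e^{\psi(v)}dv$ in the correct measure class, and the clean reduction of $NM$-ergodicity to ergodicity of the transverse $\Delta$-action (which relies on the transitivity of $NM$ on its orbits so that the conditional measures are automatically ergodic). By contrast, the definitional matching of Definition \ref{ess} with Schmidt's essential values and the citation of his theorem should be comparatively formal.
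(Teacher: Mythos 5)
Your proposal is correct and is essentially the paper's own proof: the paper does not argue this proposition itself but quotes it from \cite[Proposition 9.2]{lee2020invariant}, whose proof is exactly your reduction --- identify $G/NM$ with $\F\times\fa$ via the Iwasawa decomposition so that the $NM$-action on $(\Delta\ba G, m^{\BR}_\nu)$ becomes the $\Delta$-skew product over $(\F,\nu)$ by the Iwasawa/Busemann cocycle, and then apply Schmidt's essential-values criterion, for which $\ess_\nu(\Delta)$ is by definition the group of essential values of that cocycle. The points you flag as needing care (the measure-class identification of the transverse measure with $\nu\otimes\op{Leb}_\fa$, the passage from $NM$-invariant functions on $\Delta\ba G$ to $\Delta$-invariant functions on $\F\times\fa$, and the convention-matching with Schmidt's definition, where ergodicity of $\nu$ is indeed used) are exactly the routine steps carried out in that reference.
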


\subsection*{Singularity of conformal measures by essential subgroups}
The following proposition is one of key ingredients of this paper:
\begin{prop}[{\cite[Proof of Lemma 10.21]{lee2020invariant}}] \label{prop.loabscont} For $i=1,2$, let $\nu_i$ be a
   $(\Delta, \psi_i)$-conformal measure for some $\psi_i \in \fa^*$.  If $\nu_2\ll \nu_1$,
   then 
   $$\psi_1(w)=\psi_2(w)\quad  \text{  for all $w\in \ess_{\nu_1}(\Delta)$}.$$

In particular, 
   if $\ess_{\nu_1}(\Delta) = \fa$,
   then $\nu_2 \ll \nu_1 $ implies
   $\psi_1=\psi_2. $
\end{prop}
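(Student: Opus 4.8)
The plan is to reduce the statement to showing that the single linear form $\chi := \psi_2 - \psi_1 \in \fa^*$ vanishes on $\ess_{\nu_1}(\Delta)$, and to extract this vanishing from the Radon--Nikodym cocycle relating $\nu_1$ and $\nu_2$. First I would set $\phi := d\nu_2/d\nu_1 \ge 0$ and derive, for each fixed $g\in\Delta$, the transformation rule
$$\phi(g^{-1}\eta) = \phi(\eta)\, e^{\chi(\beta_\eta(o,g o))} \qquad \text{for }\nu_1\text{-a.e. }\eta.$$
This follows by combining the chain rule $\tfrac{dg_*\nu_2}{dg_*\nu_1}(\eta)=\phi(g^{-1}\eta)$ with the two conformality identities $\tfrac{dg_*\nu_i}{d\nu_i}(\eta)=e^{\psi_i(\beta_\eta(o,g o))}$ from \eqref{co}. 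Since $\Delta$ is countable, the exceptional null sets can be unioned, so these a.e.\ identities hold simultaneously for all $g\in\Delta$ on a single $\nu_1$-conull set, which I fix once and for all.

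Next, fixing $w\in\ess_{\nu_1}(\Delta)$, I would use the essential subgroup to probe the cocycle $\chi(\beta_\cdot(o,g o))$ while keeping $\phi$ nearly constant. Because $\nu_2$ is a nonzero measure, $\nu_1(\{\phi>0\})>0$, so for any $\delta>0$ at least one of the level sets $B:=\{\,c\le \phi< c e^{\delta}\,\}$ (for a suitable $c>0$) has $\nu_1(B)>0$. Applying Definition \ref{ess} to this $B$ and to a small $\e>0$ produces $g\in\Delta$ and a set $C:=B\cap gB\cap\{\xi\in\F:\norm{\beta_\xi(o,g o)-w}<\e\}$ with $\nu_1(C)>0$. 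For $\eta\in C$ one has $\eta\in B$ and $g^{-1}\eta\in B$, so both $\phi(\eta)$ and $\phi(g^{-1}\eta)$ lie in $[c,c e^{\delta})$; the transformation rule then forces $e^{\chi(\beta_\eta(o,g o))}=\phi(g^{-1}\eta)/\phi(\eta)\in(e^{-\delta},e^{\delta})$, that is, $\abs{\chi(\beta_\eta(o,g o))}<\delta$.

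Combining this with $\norm{\beta_\eta(o,g o)-w}<\e$ and the Lipschitz bound $\abs{\chi(v)-\chi(w)}\le\norm{\chi}\,\norm{v-w}$ gives $\abs{\chi(w)}<\delta+\norm{\chi}\,\e$; letting $\delta,\e\to 0$ yields $\chi(w)=0$, which is exactly $\psi_1(w)=\psi_2(w)$. The final assertion follows by letting $w$ range over all of $\fa$ when $\ess_{\nu_1}(\Delta)=\fa$, so that $\chi\equiv 0$ and $\psi_1=\psi_2$. The one point requiring genuine care---and the main technical obstacle---is the interaction between the $\nu_1$-a.e.\ cocycle relation and the positive-measure set delivered by the essential subgroup: I must establish the transformation rule on a conull set valid for every $g$ \emph{before} selecting $g$ from the definition of $\ess_{\nu_1}(\Delta)$, and I must choose the level decomposition of $\{\phi>0\}$ so that simultaneous membership of $\eta$ and $g^{-1}\eta$ in the same narrow band $B$ pins the ratio $\phi(g^{-1}\eta)/\phi(\eta)$ to within $e^{\pm\delta}$.
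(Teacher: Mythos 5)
Your proof is correct, and it starts from the same place as the paper's: the cocycle identity $\phi(g^{-1}\eta)=e^{(\psi_2-\psi_1)(\beta_\eta(o,go))}\phi(\eta)$ for the Radon--Nikodym derivative $\phi=d\nu_2/d\nu_1$, valid on a single $\nu_1$-conull set for all $g\in\Delta$ (your explicit appeal to countability of $\Delta$ is the right justification, and you handle the a.e.\ issue correctly by fixing the conull set before the essential subgroup produces $g$, then intersecting it with the positive-measure set $C$). Where you diverge is the finishing mechanism. The paper fixes one band $B=\{r_1<\phi<r_2\}$ of positive measure, with arbitrary (possibly large) multiplicative width, argues by contradiction, and uses the fact that $\ess_{\nu_1}(\Delta)$ is a \emph{subgroup} of $\fa$ to replace $w$ by an integer multiple $nw\in\Z w$ whose cocycle jump $e^{(\psi_2-\psi_1)(nw)}>2r_2/r_1$ overshoots the band width; the contradiction is that $\xi,\gamma^{-1}\xi\in B$ caps the ratio $\phi(\gamma^{-1}\xi)/\phi(\xi)$ by $r_2/r_1$ while the cocycle forces it above $r_2/r_1$. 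You instead keep $w$ fixed and shrink the band: for each $\delta>0$ a band $\{c\le\phi<ce^\delta\}$ of positive measure exists by countable decomposition of $\{\phi>0\}$, and joint membership of $\eta$ and $g^{-1}\eta$ in it pins the ratio within $(e^{-\delta},e^\delta)$, yielding $\abs{(\psi_2-\psi_1)(w)}<\delta+\norm{\psi_2-\psi_1}\,\e$ directly. Your route buys two small things: it is direct rather than by contradiction, and it never uses the group structure of $\ess_{\nu_1}(\Delta)$ (only the defining property at the single vector $w$); it also skips the paper's intermediate integration over $B'$, which is indeed dispensable once one works on the conull set. The only cost is that you invoke the essential-subgroup property once for each pair $(\delta,\e)$ rather than once in total, which is harmless. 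Both arguments prove exactly the same statement with comparable effort.
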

\begin{proof} 
Suppose that
$\nu_{2}\ll\nu_{1}$. 
Consider the Radon-Nikodym derivative $\phi:=\frac{d\nu_{2}}{d\nu_{1}}\in L^1(  \F,\nu_{1})$.
Note that there exists a $\nu_{1}$-conull set $ F \subset \F$ such that for all $\xi\in F$ and $\ga\in\Delta$, we have
\begin{equation}\label{eq.FE1}
\phi (\ga^{-1}\xi)=e^{(\psi_2-\psi_1)(\beta_\xi(o,\ga o))}\phi (\xi).
\end{equation}

Choose $0<r_1<r_2$ such that
$$
B:=\{\xi\in  \F :r_1<\phi (\xi)<r_2\}
$$
has a positive $\nu_{1}$-measure.

 Suppose that there exists a vector $w\in \ess_{\nu_1}(\Delta) $ such that $\psi_1(w) \neq \psi_2(w)$.
Since $\ess_{\nu_1}(\Delta)$ is a subgroup of $\fa$, 
we have $\Z w\subset  \ess_{\nu_1}(\Delta) \cap  \{\psi_1\ne \psi_2\} $.
Hence we may assume by replacing $w$ with some element of $\Z w$ if necessary that
\begin{equation}\label{eq.ratio}
e^{(\psi_2-\psi_1)(w)}>\frac{2r_2}{r_1}.
\end{equation}

Choose $\e>0$ such that $e^{\norm{\psi_2-\psi_1}\e}<2$.
Since $\nu_{1}(B)>0$ and $w\in \ess_{\nu_1}(\Delta)$, there exists $\ga\in\Delta$ such that
$$
B':=B\cap\ga B\cap\{\xi\in  \F : \norm{\beta_\xi(o ,\ga o)-w}<\e\}
$$
has a positive $\nu_{1}$-measure.
Now note that
\begin{align*}
\int_{B'}\phi (\ga^{-1}\xi)\,d\nu_{1}(\xi)&>e^{(\psi_2-\psi_1)(w)-\norm{\psi_2-\psi_1}\e}\int_{B'}\phi (\xi)\,d\nu_{1}(\xi)\\
&>\frac{r_2}{r_1}\int_{B'}\phi (\xi)\,d\nu_{1}(\xi)
\end{align*}
by \eqref{eq.FE1}, \eqref{eq.ratio}, and the choice of $\e$.
In particular,
$$
\nu_{1} \left( \left\{\xi\in B': \phi(\ga^{-1}\xi)>\frac{r_2}{r_1} \phi(\xi)\right\}\right)>0.
$$
It follows that there exists $\xi\in B'\cap F$ such that
\be\label{bpr}
\phi (\ga^{-1}\xi)>\frac{r_2}{r_1}\phi (\xi).
\ee
On the other hand, for $\xi\in B'$, both $\xi$ and $\ga^{-1}\xi$ belong to $ B$.
By the definition of $B$, we have $\phi(\xi)>r_1$
and $\phi(\ga^{-1}\xi)<r_2$.
Therefore,
 for all $\xi\in B'$, we get
$$
\phi(\ga^{-1}\xi) < r_2 = {r_2 \over r_1} r_1 < {r_2 \over r_1} \phi(\xi).
$$
This is a contradiction to \eqref{bpr}.
\end{proof}

\section{Graph-conformal measures of
self-joinings}\label{set}
For $i = 1, 2$, let $G_i$ be a connected semisimple real algebraic group and let $(X_i, d_i)$ be the associated Riemannian symmetric space.
Let $(X, d)$ be the Riemannian product $(X_1\times X_2, \sqrt{d_1^2 + d_2^2})$. 
Set $$G=G_1\times G_2.$$
Then $G$ acts as isometries on $X$.
For $\square\in \{ A, M, N, P,K\}$, we consider the corresponding subgroups of $G$ by setting $$\square= \square_1\times \square_2.$$
In particular, $A=A_1\times A_2$. Let
$A^+=A_1^+ \times A_2^+$. Let $\fa$ denote the Lie algebra of $A$, and $\fa^+=\log A^+$. 
Set $\fa_i = \Lie A_i$ and $\fa_i^+ = \log A_i^+$ for $i = 1, 2$.
We note that $$\fa = \fa_1 \oplus \fa_2 $$
and that
every element of $\fa^+$ is the sum of elements of $\fa_1^+$ and $\fa_2^+$.

We also have $$\F = \F_1 \times \F_2$$
where $\F = G/P$ and $\F_i = G_i/P_i$ are Furtenberg boundaries for $i = 1, 2$.

Let $\Ga < G_1$ be a Zariski dense discrete subgroup.
Let $\rho:\Ga \to G_2$ be a discrete faithful Zariski dense representation. 

\begin{Def}[Self-joining] We define the self-joining of $\Ga$ via $\rho$ as  
$$\Ga_\rho:=(\id\times \rho)(\Ga) = \{ (g, \rho(g))\in G : g \in \Ga \},$$
which is a discrete subgroup of $G$.
\end{Def}
We begin by recalling the following:
\begin{lem}[{cf. \cite[Lemma 4.1]{kim2022rigidity}}] \label{Zdense}
If $G_1$ and $G_2$ are simple, the self-joining $\Ga_\rho < G$ is not Zariski dense  if and only if $\rho$ extends to a
Lie group isomorphism $G_1\to G_2$.
\end{lem}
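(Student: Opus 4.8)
The plan is to handle the two implications separately, with all of the substance in the forward direction.

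\emph{If $\rho$ extends, then $\Ga_\rho$ is not Zariski dense.} Suppose $\rho$ extends to a Lie group isomorphism $\Phi\colon G_1\to G_2$, so that $\rho(\ga)=\Phi(\ga)$ for every $\ga\in\Ga$. Then $\Ga_\rho$ is contained in the graph $\{(g,\Phi(g)):g\in G_1\}$. Since an isomorphism of connected semisimple real algebraic groups is algebraic, this graph is the image of a morphism and hence a Zariski closed subgroup of $G$, of dimension $\dim G_1=\dim G_2<\dim G$. Thus $\Ga_\rho$ is not Zariski dense; this direction is routine.

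\emph{If $\Ga_\rho$ is not Zariski dense, then $\rho$ extends.} Let $H$ be the Zariski closure of $\Ga_\rho$, a proper Zariski closed subgroup of $G$. First I would check that the coordinate projections $p_i\colon G\to G_i$ restrict to surjections $H\to G_i$: the image $p_1(H)$ is a Zariski closed subgroup containing $p_1(\Ga_\rho)=\Ga$, hence contains the Zariski closure $G_1$ of $\Ga$; likewise $p_2(H)\supseteq\overline{\rho(\Ga)}=G_2$. Passing to Lie algebras, $\mathfrak h:=\Lie H\subseteq \mathfrak g_1\oplus\mathfrak g_2$ surjects onto each $\mathfrak g_i$ under $dp_1,dp_2$.

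The core of the argument is a Goursat-type analysis using simplicity. The subspace $\ker(dp_1|_{\mathfrak h})=\mathfrak h\cap(0\oplus\mathfrak g_2)$, viewed inside $\mathfrak g_2$ via $dp_2$, is an ideal of $\mathfrak g_2$: for $(0,Y)\in\mathfrak h$ and arbitrary $W\in\mathfrak g_2$, choosing $(Z,W)\in\mathfrak h$ (possible since $dp_2(\mathfrak h)=\mathfrak g_2$) gives $(0,[W,Y])\in\mathfrak h$. By simplicity this ideal is $0$ or $\mathfrak g_2$, and the latter forces $\mathfrak h=\mathfrak g_1\oplus\mathfrak g_2$, contradicting that $H$ is proper. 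Hence $dp_1|_{\mathfrak h}$ is injective, thus an isomorphism, and symmetrically so is $dp_2|_{\mathfrak h}$; therefore $\mathfrak h$ is the graph of a Lie algebra isomorphism $\theta\colon\mathfrak g_1\xrightarrow{\sim}\mathfrak g_2$. I would then integrate $\theta$ to a Lie group isomorphism $\Phi\colon G_1\to G_2$ and verify, using that $(\ga,\rho(\ga))\in H$ is recovered as $\ga$ under $p_1$, that $\Phi(\ga)=\rho(\ga)$ for all $\ga\in\Ga$, so that $\Phi$ extends $\rho$.

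The hard part will be this final passage from the Lie algebra isomorphism $\theta$ to a genuine group isomorphism extending $\rho$. At the group level $p_i|_{H}$ is a priori only an isogeny onto $G_i$, with kernels $N_i=p_i\big(H\cap\ker(p_{3-i}|_H)\big)$ that are normal and finite, hence central, in $G_i$. One must argue these kernels are trivial so that $p_1|_H$ and $p_2|_H$ are isomorphisms and $\Phi:=p_2\circ(p_1|_H)^{-1}$ is the required isomorphism rather than a mere isogeny; this is automatic when the $G_i$ have trivial center and requires the extra bookkeeping of central subgroups in general. Everything else is formal.
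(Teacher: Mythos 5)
Your strategy is essentially the one behind the paper's proof of this lemma --- the paper itself gives no argument but cites \cite[Lemma 4.1]{kim2022rigidity}, whose proof is a Goursat-type argument carried out directly at the group level --- and your forward direction and your Lie-algebra Goursat analysis are both fine. The genuine gap is exactly at the step you defer at the end, and it is not ``bookkeeping'': if one allows the $G_i$ to be simple only up to finite center, the statement you are trying to prove is \emph{false}, so no argument can kill your central kernels $N_i$ in that generality. Concretely, take $G_1=\PSL_2(\R)$, $G_2=\op{SL}_2(\R)$, let $\pi\colon \op{SL}_2(\R)\to\PSL_2(\R)$ be the covering map, let $\Ga<\PSL_2(\R)$ be a Zariski dense Schottky (hence free, discrete) subgroup, and let $\rho\colon\Ga\to \op{SL}_2(\R)$ be a lift of the inclusion, which exists by freeness. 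Then $\rho$ is discrete, faithful and Zariski dense (and even admits the identity as boundary map), and $\Ga_\rho$ lies in the proper Zariski closed subgroup $\{(\pi(h),h):h\in \op{SL}_2(\R)\}$, so it is not Zariski dense; yet $\rho$ cannot extend to a Lie group isomorphism because $\PSL_2(\R)\not\simeq \op{SL}_2(\R)$. In this example your kernel $N_2=\ker(p_1|_H)$ is exactly the center $\{\pm I\}$ of $\op{SL}_2(\R)$. So the deferred step can only be closed by invoking what ``simple'' means in this lemma and in the cited one: the $G_i$ have no nontrivial proper closed normal subgroups --- equivalently, being connected, they have trivial center. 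Under that reading your gap closes in one line ($N_i$ is a finite normal subgroup of a connected center-free group, hence central, hence trivial), but as written your proof is incomplete precisely where the simplicity hypothesis has to do its group-level work, and your suggestion that the centerful case is handled by ``extra bookkeeping'' is wrong.

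It is also worth noting that, once simplicity is read this way, your whole Lie-algebra detour and the integration step can be skipped, which is what the cited proof does. Work directly with the Zariski closure $H$: the kernel $\ker(p_1|_H)=\{e\}\times N_2$ is a closed subgroup normalized by $p_2(H)\supset \rho(\Ga)$, and since normalizers of closed subgroups are Zariski closed, $N_2$ is normal in $G_2$; simplicity gives $N_2=\{e\}$ or $N_2=G_2$, and the latter forces $H=G_1\times G_2$, a contradiction. Thus both $p_i|_H$ are bijective onto $G_i$ (for surjectivity note that $p_1(H)$ is a Zariski dense constructible subgroup of $G_1$ --- on real points it need not literally be Zariski closed, a small inaccuracy in your write-up, but the conclusion survives), so $\Phi:=p_2|_H\circ(p_1|_H)^{-1}$ is a bijective continuous homomorphism, hence a Lie group isomorphism, and $\Phi(\ga)=\rho(\ga)$ for all $\ga\in\Ga$ simply because $(\ga,\rho(\ga))\in H$. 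This reaches the extension of $\rho$ without ever mentioning Lie algebras, coverings, or isogenies.
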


In the rest of the paper, we assume that there exists a $\rho$-equivariant continuous map
$$f:\La \to \F_2$$
where $\La \subset \F_1$ is the limit set of $\Ga$.
We will not assume that $f$ is injective,
mentioned otherwise. When it is injective, we call it a $\rho$-boundary map.

\begin{example}\label{ex.bdr}
 Cases where a $\rho$-boundary map exists include the following: 
\begin{enumerate}
\item If $\Ga$ and $\rho(\Gamma)$ are geometrically finite and $\rho$ is type-preserving, the 
$\rho$-boundary map  exists and is a continuous extension of $\rho$; this was first shown by Tukia for $G_1=G_2=\so(n,1)$ \cite[Theorem 3.3]{Tukia1985isomorphisms} and generalized to all rank one groups by \cite[Theorem 0.1]{Yaman2004topological} and \cite[Theorem A.4]{Das2016tukia}.
\item 
If $\rho:\Ga\to \SO^\circ(n,1)$ is a quasiconformal deformation of $\Ga<\SO^\circ(n,1)$, i.e., there exists a quasiconformal homeomorphism $F:\S^{n-1}\to\S^{n-1}$ such that $\rho(\ga)= F\circ \ga\circ  F^{-1}$ for all $\ga\in \Ga$, then $f=F|_\La$
is the $\rho$-boundary map.

\item Let $\Ga<G_1$ be convex cocompact.
A Zariski dense representation $\rho:\Ga\to G_2$ is an {\it Anosov} representation (with respect to $P_2$) if there is a $\rho$-boundary map
$$f:\La \to \F_2$$ which maps two distinct points of $\La$ to points of $\F_2$ in general position.
Moreover, by the work of Kapovich-Leeb-Porti \cite[Theorem 1.4]{Kapovich2018morse}, $f$ is a continuous extension of~$\rho$.

\end{enumerate}
\end{example}

\subsection*{Uniqueness of the boundary map}
We will prove that there can be at most one $\rho$-boundary map (Lemma \ref{ll}).
First observe that, since  $\La$ (resp. $\La_{\rho(\Ga)}$) is 
the unique $\Ga$ (resp. $\rho(\Ga)$) minimal subset of $\F_1$ (resp. $\F_2$), it follows from the equivariance of $f$ that
$$f(\La)=\La_{\rho(\Ga)}.$$
Moreover,
$$\La_\rho=(\id \times f)(\Lambda) =\{(\xi, f(\xi)) \in \F_1\times \F_2:\xi\in \La \}$$
is the unique $\Ga_\rho$-minimal subset of $\F$; therefore
it is the limit set of $\Ga_\rho$ by Theorem \ref{dense}.

\begin{lem}[Uniqueness] \label{ll}
Let $\Ga < G_1$ be a Zariski dense discrete subgroup and $\rho : \Ga \to G_2$ a Zariski dense discrete faithful representation. If $ g \in \Ga$ and $\rho(g)$ are loxodromic, then
  $$f(y_{g})=y_{\rho(g)}.$$ 
  In particular, when $G_1$ and $G_2$ are simple, $f$ is the unique $\rho$-equivariant continuous map $\La\to \F_2$.
\end{lem}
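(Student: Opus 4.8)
The plan is to establish the fixed-point identity $f(y_g)=y_{\rho(g)}$ first, and then deduce the uniqueness of $f$ from it by a minimality argument. For the first part, given $g\in\Ga$ with both $g$ and $\rho(g)$ loxodromic, I would package them into the single element $\gamma:=(g,\rho(g))\in\Ga_\rho$ and observe that it is loxodromic in $G=G_1\times G_2$. Indeed, writing $g=h_1a_1m_1h_1^{-1}$ and $\rho(g)=h_2a_2m_2h_2^{-1}$ with $a_i\in\inte A_i^+$, one has $\gamma=(h_1,h_2)(a_1,a_2)(m_1,m_2)(h_1,h_2)^{-1}$ with $(a_1,a_2)\in\inte A^+$, so $\lambda(\gamma)=(\lambda(g),\lambda(\rho(g)))\in\inte\fa^+$ and the attracting fixed point of $\gamma$ is $(y_g,y_{\rho(g)})$. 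By Definitions \ref{conv} and \ref{def.limitset}, the convergence $\gamma^\ell o\to(y_g,y_{\rho(g)})$ is regular, so this attracting fixed point lies in the limit set $\La_{\Ga_\rho}$.

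I would then invoke the identification $\La_{\Ga_\rho}=\La_\rho=(\id\times f)(\La)=\{(\xi,f(\xi)):\xi\in\La\}$ recorded just before the statement. This gives $(y_g,y_{\rho(g)})=(\xi,f(\xi))$ for some $\xi\in\La$; comparing the first coordinates forces $\xi=y_g$, and hence $f(y_g)=y_{\rho(g)}$, as desired. The delicate input here is precisely the inclusion $\La_{\Ga_\rho}\subseteq(\id\times f)(\La)$, i.e.\ that the $\Ga_\rho$-minimal closed invariant set $(\id\times f)(\La)$ already exhausts the limit set. Unwinding the dynamics, this amounts to producing a single point $(\xi,f(\xi))\in\La_\rho$ in general position with the repelling fixed point $(y_{g^{-1}},y_{\rho(g)^{-1}})$ of $\gamma$ (equivalently, $\xi$ in general position with $y_{g^{-1}}$ and $f(\xi)$ in general position with $y_{\rho(g)^{-1}}$): its forward $\gamma$-iterates then converge to $(y_g,y_{\rho(g)})$ and stay inside the closed set $(\id\times f)(\La)$, placing the limit there. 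When $\Ga_\rho$ is Zariski dense, this is exactly Theorem \ref{dense}; in the complementary case, where $\Ga_\rho$ fails to be Zariski dense, Lemma \ref{Zdense} tells us $\rho$ already extends to a Lie group isomorphism $\Phi$, and one argues directly with $f$ and $\Phi|_\La$.

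For the uniqueness claim, suppose $f,f':\La\to\F_2$ are two $\rho$-equivariant continuous maps and consider the coincidence set $E:=\{\xi\in\La:f(\xi)=f'(\xi)\}$. It is closed by continuity, and it is $\Ga$-invariant: if $f(\xi)=f'(\xi)$, then equivariance yields $f(\ga\xi)=\rho(\ga)f(\xi)=\rho(\ga)f'(\xi)=f'(\ga\xi)$. By the first part, $y_g\in E$ for every $g\in\Ga$ for which both $g$ and $\rho(g)$ are loxodromic, and at least one such $g$ exists: if $\Ga_\rho$ is Zariski dense then $\Ga_\rho$ contains loxodromic elements of $G$ by Theorem \ref{dense}, while otherwise $\rho=\Phi|_\Ga$ for an isomorphism $\Phi$ by Lemma \ref{Zdense}, and any loxodromic $g\in\Ga$ (which exists again by Theorem \ref{dense}) has $\rho(g)=\Phi(g)$ loxodromic. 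Hence $E\neq\emptyset$, and since $\La$ is the unique $\Ga$-minimal subset of $\F_1$ (Theorem \ref{dense}), the nonempty closed invariant set $E$ must equal $\La$, giving $f=f'$.

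The step I expect to be the main obstacle is the inclusion $\La_{\Ga_\rho}\subseteq(\id\times f)(\La)$ in the second paragraph, that is, guaranteeing a point of $\La_\rho$ in general position with $(y_{g^{-1}},y_{\rho(g)^{-1}})$. This is a genericity statement about how the limit set meets the big Bruhat cell, and it is exactly where Zariski density of $\Ga_\rho$ enters; the remaining (non--Zariski-dense) case must be separated off via Lemma \ref{Zdense} and treated by hand.
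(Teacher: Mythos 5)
Your treatment of the Zariski dense case is correct and takes a genuinely different route from the paper: you pass to the loxodromic element $\gamma=(g,\rho(g))\in\Ga_\rho$, note that its attracting fixed point $(y_g,y_{\rho(g)})$ lies in the limit set $\La_{\Ga_\rho}$, and use the uniqueness of the $\Ga_\rho$-minimal set (Theorem \ref{t2}) to get $\La_{\Ga_\rho}\subseteq(\id\times f)(\La)$; likewise your coincidence-set argument for uniqueness (closed, $\Ga$-invariant, nonempty, hence equal to $\La$ by minimality) is valid and even cleaner than the paper's density argument, since it needs only one point $y_g\in E$. The genuine gap is the complementary case, where $\Ga_\rho$ is \emph{not} Zariski dense, which you dismiss with ``one argues directly with $f$ and $\Phi|_\La$.'' That is not an argument. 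Lemma \ref{Zdense} produces an isomorphism $\Phi:G_1\to G_2$ whose induced boundary homeomorphism $\bar\Phi:\F_1\to\F_2$ does satisfy $\bar\Phi(y_g)=y_{\Phi(g)}$, but the lemma is about the \emph{given} map $f$, and $\Phi$ by itself says nothing about $f$. The only way to transfer the fixed-point identity from $\bar\Phi|_\La$ to $f$ is to prove $f=\bar\Phi|_\La$ --- but that is precisely an instance of the uniqueness assertion, which in your scheme is \emph{deduced from} the fixed-point identity. As written, the reasoning is circular.

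Nor can the deferred case be closed by soft citations: Theorem \ref{t2} is unavailable for $\Ga_\rho$ (it is not Zariski dense in $G$), and although $\Ga_\rho$ is Zariski dense in the graph subgroup $H=\{(h,\Phi(h)):h\in G_1\}\simeq G_1$, the space $\F=\F_1\times\F_2$ is not the Furstenberg boundary of $H$ (the $H$-action on $\F$ is not transitive), so the unique-minimal-set theorem cannot be quoted inside $H$ either. What is actually required is a point $\xi_0\in\La$ in general position with $y_{g^{-1}}$ such that $f(\xi_0)$ is in general position with $y_{\rho(g)^{-1}}$: then $g^\ell\xi_0\to y_g$, and $\rho(g)^\ell f(\xi_0)=f(g^\ell\xi_0)$ converges simultaneously to $f(y_g)$ and to $y_{\rho(g)}$. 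The paper produces such a $\xi_0$ using only the hypotheses of the lemma --- Zariski density of $\Ga$ in $G_1$ (to show that the set $\La\cap\mathcal O_1$ of limit points in general position with $y_{g^{-1}}$ is dense in $\La$, via the finite-cover-by-subvarieties contradiction) and Zariski density of $\rho(\Ga)$ in $G_2$ (to show $f(\La)=\La_{\rho(\Ga)}$ meets the Zariski open set $\mathcal O_2$, so that $f^{-1}(\mathcal O_2\cap f(\La))$ is open and nonempty). This argument needs no dichotomy on the Zariski density of $\Ga_\rho$ at all. So your case split is not a shortcut: the case you set aside contains the entire difficulty of the lemma, and filling it in essentially forces you to reproduce the paper's argument.
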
 
\begin{proof} Let $g\in \Ga$ be a loxodromic element.
Note that if $\xi\in \Lambda$ is in general position with $y_{g^{-1}}$, then, by the  $\rho$-equivariance and continuity of $f$,
 \be\label{rrr2}
 \rho(g)^{\ell} f(\xi) = f(g^{\ell} \xi) \to f(y_g)\quad\text{as $\ell\to +\infty$. }\ee 
On the other hand, if $\rho(g)$ is loxodromic and
$f(\xi)$ is in general position with $y_{\rho(g)^{-1}}$, then 
$\rho(g)^{\ell} f(\xi)  \to y_{\rho(g)}$ as $\ell\to +\infty$.

Let $\cal O_1$ (resp. $\cal O_2$) denote the set of points in $\F_1$ (resp. $\F_2$) which are in general position with $y_{g^{-1}}$ (resp. $y_{\rho(g)^{-1}}$). 
We claim that $\La\cap \cal O_1$ is dense in $\La$. Let $U$ be an open subset intersecting $\La$. Suppose
$U\cap \La \subset \F_1 - \cal O_1$. 
Since $\La$ is compact and $\Ga$ acts minimally on $\La$, $\La \subset \bigcup_{i = 1}^n \ga_i U$ for finitely many $\ga_1, \cdots, \ga_n \in \Ga$. We then have that $\La$ is contained in a union of finitely many proper subvarieties, $\bigcup_{i = 1}^n \ga_i  (\F_1 - \cal O_1)$. This contradicts the Zariski dense hypothesis of $\La$, proving that
 $\La \cap \cal O_1$ is dense in $\La$.

By the Zariski density of $\rho(\Ga)$ in $G_2$,  the limit set $f(\La) = \La_{\rho(\Ga)}$ is  Zariski dense in $\F_2$, and hence
 $\cal O_2 \cap f(\La)  \neq \emptyset$.
As $f$, regarded as a map $\La\to f(\La)$, is continuous,  $f^{-1}(\cal O_2\cap f(\La))$ is a non-empty open subset of $\La$. Therefore 
there exists $\xi_0 \in \La \cap \cal O_1  \cap f^{-1}(\cal O_2\cap f(\La))$.
Since $f(\xi_0) \in \cal O_2$, we have $\rho(g)^{\ell}f(\xi_0) \to y_{\rho(g)}$ as $\ell \to + \infty$. On the other hand, since $\xi_0 \in \cal O_1$, $\rho(g)^{\ell}f(\xi_0) \to f(y_g)$ as $\ell \to + \infty$ by \eqref{rrr2}. Therefore, $f(y_g) = y_{\rho(g)}$. This implies the first claim.

To prove the uniqueness, suppose that $f_1, f_2 : \La \to \F_2$ are $\rho$-equivariant continuous maps.
First consider the case when $\Gr$ is Zariski dense. By Theorem \ref{t2},   projecting $\La_{\rho} \subset \F_1 \times \F_2$ to its first factor $\La$, the set
$\La'=\{y_g: g \in \Ga \text{ loxodromic}, \rho(g)\text{ loxodromic}\}$ is dense in $\La$. By the first part of this lemma, $f_1=f_2$ on $\La'$ and hence by the continuity
of $f_1, f_2$, we get $f_1 = f_2$ on $\La$. 
In the case when $\Gr$ is not Zariski dense,  by Lemma \ref{Zdense}, $\rho$ extends to a Lie group isomorphism $G_1 \to G_2$ which must be an algebraic group isomorphism. Hence $\rho$ maps loxodromic element to a loxodromic element. Again,
the first part of this lemma implies that $f_1=f_2$ on $\La$.
\end{proof}

\subsection*{Graph-conformal measures} 
In the rest of the paper, for each $i=1,2$, we denote by
\be\label{pii} \pi_i:\fa\to \fa_i\ee 
the canonical projection.
For a linear form $\psi_i$ on $\fa_i^*$, we define
a linear form $\sigma_{ \psi_i}\in \fa^*$ by 
$$\sigma_{\psi_i}:=\psi_i\circ \pi_i;$$
so, $\sigma_{\psi_i}(u_1,u_2)= \psi_i(u_i)$ for all $(u_1, u_2)\in \fa_1\oplus \fa_2$. 
We fix $o=(o_1, o_2)\in X$.
\begin{prop} \label{lem.pushforward} Let $\psi_1\in \fa_1^*$.
\begin{enumerate}
    \item If $\nu_{\G,\psi_1}$ is a $(\Ga,\psi_1)$-conformal measure on $\La$ with respect to $o_1$,
then $(\id \times f)_*\nu_{\Ga, \psi_1} $ is a $(\Ga_\rho, \sigma_{\psi_1})$-conformal measure with respect to $o$.
 \item Any  $(\Ga_\rho, \sigma_{\psi_1})$-conformal measure on $\La_\rho$ with respect to $o$ is of the form $(\id \times f)_*\nu_{\Ga,\psi_1}$ 
for some  $(\Ga, \psi_1)$-conformal measure $\nu_{\Ga, \psi_1}$ on $\La$.
\end{enumerate}
\end{prop}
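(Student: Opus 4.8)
The plan is to prove both directions by directly checking the defining cocycle relation \eqref{co} for conformal measures, translating the $\Ga$-conformal property on $\F_1$ into the $\Ga_\rho$-conformal property on $\F = \F_1 \times \F_2$ by means of the Busemann map. The essential observation, which I would isolate first, is how the $\fa$-valued Busemann cocycle for $G = G_1 \times G_2$ decomposes under the product structure. Since $\F = \F_1 \times \F_2$, $A = A_1 \times A_2$, and the $KAN$ decomposition factors through each $G_i$, the Iwasawa cocycle satisfies $H((g_1,g_2),(\xi_1,\xi_2)) = (H_1(g_1,\xi_1), H_2(g_2,\xi_2))$, and hence for $(g_1,g_2) \in G$ and $(\xi_1,\xi_2) \in \F$,
\begin{equation*}
\beta_{(\xi_1,\xi_2)}\big((g_1,g_2),(h_1,h_2)\big) = \big(\beta^{(1)}_{\xi_1}(g_1,h_1),\, \beta^{(2)}_{\xi_2}(g_2,h_2)\big) \in \fa_1 \oplus \fa_2,
\end{equation*}
where $\beta^{(i)}$ denotes the Busemann map of $G_i$. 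Consequently $\sigma_{\psi_1}\big(\beta_{(\xi_1,\xi_2)}(go, ho)\big) = \psi_1\big(\beta^{(1)}_{\xi_1}(g_1 o_1, h_1 o_1)\big)$, which is exactly the exponent appearing in the $(\Ga,\psi_1)$-conformal relation on the first factor and, crucially, does not see the second coordinate at all.

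For part (1), I would start from a $(\Ga,\psi_1)$-conformal measure $\nu_{\Ga,\psi_1}$ on $\La$ and set $\nu := (\id \times f)_* \nu_{\Ga,\psi_1}$, a Borel probability measure supported on $\La_\rho = (\id \times f)(\La)$. To verify the cocycle relation, take an element $(\ga,\rho(\ga)) \in \Ga_\rho$. Because $\id \times f$ is $\Ga_\rho$-equivariant in the sense that $(\ga,\rho(\ga))\cdot(\xi,f(\xi)) = (\ga\xi, f(\ga\xi))$, the pushforward intertwines the $\Ga$-action on $\La$ with the $\Ga_\rho$-action on $\La_\rho$; I would make this precise by a change-of-variables computation showing that for any Borel $B \subset \F$,
\begin{equation*}
(\ga,\rho(\ga))_* \nu (B) = \int_{\La} \mathbbm 1_B(\ga\xi, f(\ga\xi))\, d\nu_{\Ga,\psi_1}(\xi),
\end{equation*}
and then applying the $\Ga$-conformality of $\nu_{\Ga,\psi_1}$ together with the Busemann decomposition above to identify the Radon--Nikodym derivative as $e^{\sigma_{\psi_1}(\beta_{(\xi,f(\xi))}(o, (\ga,\rho(\ga)) o))}$. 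This yields the $(\Ga_\rho,\sigma_{\psi_1})$-conformal property on $\La_\rho$.

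For part (2), I would go in the reverse direction: given a $(\Ga_\rho,\sigma_{\psi_1})$-conformal measure $\nu$ on $\La_\rho$, I push it forward under the first-coordinate projection $p_1 : \La_\rho \to \La$, which is a homeomorphism with inverse $\id \times f$ (here I use that $\La_\rho$ is literally the graph of $f$ over $\La$). Setting $\nu_{\Ga,\psi_1} := (p_1)_* \nu$, I would verify that it is $(\Ga,\psi_1)$-conformal on $\La$ by the same Busemann decomposition run backwards, and observe that $(\id \times f)_* \nu_{\Ga,\psi_1} = \nu$ since $p_1$ and $\id \times f$ are mutually inverse on the relevant supports. I expect the main obstacle to be purely bookkeeping rather than conceptual: one must be careful that the equivariance identity $f(\ga\xi) = \rho(\ga) f(\xi)$ (a consequence of the standing $\rho$-equivariance of $f$) is correctly invoked so that the $\Ga_\rho$-action genuinely preserves the graph, and that the change-of-variables formula for pushforward measures is applied with the right Jacobian; the Busemann factorization itself is the heart of the argument and, once established, makes both inclusions essentially symmetric.
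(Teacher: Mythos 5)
Your proposal is correct and takes essentially the same route as the paper: part (1) by pushing $\nu_{\Ga,\psi_1}$ forward along $\id\times f$ and using that $\sigma_{\psi_1}$ only sees the first component of the product Busemann cocycle, and part (2) by pushing a given $(\Ga_\rho,\sigma_{\psi_1})$-conformal measure forward under the first-factor projection (the paper's $\pi\colon\F\to\F_1$, of which your $p_1$ is the restriction to $\La_\rho$) and then recovering $\nu$ from the fact that both measures live on the graph $\La_\rho=(\id\times f)(\La)$. The only difference is cosmetic: you spell out the factorization of the Iwasawa cocycle under the product structure explicitly, which the paper invokes in a single line ("the first component of $\beta_{(\id\times f)(\eta)}(o,\ga o)$ is $\beta_{\eta}(o_1,go_1)$").
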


\begin{proof} For simplicity, we write $\nu_\Ga=\nu_{\Ga, \psi_1}$.
Clearly, the measure $(\id \times f)_*\nu_\Ga$ is supported on $\La_\rho$.
Since $\La_{\rho} = (\id \times f)(\La)$, any Borel subset
$\tilde E \subset \La_{\rho}$ is of the form $\tilde E= (\id \times f)(E)$ for some Borel subset $E \subset \Lambda$. Let  $\ga = (g, \rho(g))  \in \Gr$.
Then \be\begin{aligned} \label{eqn.pushforward1}
\ga_*(\id \times f)_*\nu_\Ga (\tilde E) & = (\id \times f)_*\nu_\Ga ( \ga^{-1}\tilde E) \\
& = (\id \times f)_* \nu_\Ga ( (\id \times f)(g^{-1} E)) \\
& = \nu_\Ga(g^{-1} E) = g_* \nu_\Ga (E).
\end{aligned}
\ee

Since $\nu_\Ga$ is a $(\Ga,\psi_1)$-conformal measure  with respect to
$o_1$, we have $$
\begin{aligned}
g_*\nu_\Ga (E) & = \int_E e^{\psi_1(\beta_{\eta}(o_1, g o_1))} d\nu_\Ga(\eta)\\
& = \int_{(\id \times f)(E)} e^{\psi_1( \beta_{\eta}(o_1, g o_1))} d (\id \times f)_*\nu_\Ga ((\id \times f)(\eta)).
\end{aligned}$$
Since the first component of $(\beta_{(\id \times f)(\eta)}(o, \ga o)) $ is
given by $\beta_{\eta}(o_1, g o_1)$, by performing a change of variable $\xi = (\id \times f)(\eta)$, we get  $$g_* \nu_\Ga(E) = \int_{\tilde E} e^{\sigma_{\psi_1} (\beta_{\xi}(o, \ga o))} d(\id \times f)_*\nu_\Ga(\xi).$$
Together with \eqref{eqn.pushforward1}, we obtain
$$\ga_*(\id \times f)_*\nu_{\Ga}(\tilde E) = \int_{\tilde E} e^{\sigma_{\psi_1}(\beta_{\xi}(o, \ga o))} d (\id \times f)_*\nu_{\Ga}(\xi).$$
Since $\ga\in \Gr$ is an arbitrary element, this proves $(\id \times f)_*\nu_{\Ga}$ is a $(\Ga_\rho, \sigma_{\psi_1})$-conformal measure with respect to $o$. This proves (1).

To prove (2), let $\nu$ be a $(\Gr, \sigma_{\psi_1})$-conformal measure on $\La_\rho$ with respect to $o$. We denote by $\pi : \F \to \F_1$ the canonical projection and claim that the pushforward $\pi_*\nu$ is a $(\Ga,\psi_1)$-conformal measure on $\La$
 with respect to $o_1$. To see this,
consider any $g \in \Ga$ and any Borel subset $E \subset \La$.
Then $$g_* \pi_* \nu(E) = \pi_*\nu (g^{-1}E) = \nu(g^{-1}E \times \F_2).$$ Since $\nu$ is supported on $\La_{\rho} = (\id \times f)(\La)$ and $f$ is $\rho$-equivariant, we have
$$g_* \pi_* \nu(E) = \nu((\id \times f)(g^{-1}E)) = (g, \rho(g))_* \nu((\id \times f)(E)).$$
By the $(\Gr, \sigma_{\psi_1})$-conformality of $\nu$,  we have  $$g_* \pi_* \nu(E) = \int_{(\id \times f)(E)} e^{\psi_1( \beta_{\pi(\xi)}(o_1, g o_1))} d\nu(\xi) = \int_{E} e^{\psi_1 ( \beta_{\eta}(o_1, g o_1))} d \pi_*\nu(\eta).$$
Hence we have $${dg_* \pi_* \nu \over d \pi_* \nu}(\eta) = e^{\psi_1( \beta_{\eta}(o_1, g o_1))},$$ 
proving that $\pi_*\nu$ is a $(\Ga, \psi_1)$-conformal measure on $\La$ with respect to $o_1$. Set $\nu_{\Ga, \psi_1}=\pi_*\nu$.
Now for any Borel subset $E \subset \La$, 
$$\nu((\id \times f)(E)) = \pi_*\nu(E) = \nu_{\Ga,\psi_1}(E) = (\id \times f)_*\nu_{\Ga, \psi_1} ((\id \times f)(E)).$$ 
Since both $\nu$ and $ (\id \times f)_*\nu_{\Ga, \psi_1}$ are supported on $\La_{\rho} = (\id \times f)(\La)$, this proves that
$\nu=(\id \times f)_*\nu_{\Ga, \psi_1}$, finishing the proof.
\end{proof}

We have the following corollary:
\begin{cor} \label{cor.uniqueconformal} 
If $\nu_{\Ga, \psi_1}$ is a unique $(\Ga, \psi_1)$-conformal measure on $\La$,
then $(\id \times f)_*\nu_{\Ga,\psi_1}$ is the unique $(\Ga_\rho, \sigma_{\psi_1})$-conformal measure on $\La_\rho$
with respect to $o$; in particular,
$(\id \times f)_*\nu_{\Ga,\psi_1}$ is $\Ga_\rho$-ergodic.
\end{cor}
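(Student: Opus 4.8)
The plan is to read off both assertions directly from the correspondence established in Proposition \ref{lem.pushforward}, which matches $(\Ga,\psi_1)$-conformal measures on $\La$ with $(\Ga_\rho,\sigma_{\psi_1})$-conformal measures on $\La_\rho$ through the pushforward $(\id\times f)_*$. For uniqueness, I would let $\nu$ be an arbitrary $(\Ga_\rho,\sigma_{\psi_1})$-conformal measure on $\La_\rho$ with respect to $o$. By Proposition \ref{lem.pushforward}(2) there is a $(\Ga,\psi_1)$-conformal measure $\nu'$ on $\La$ with $\nu=(\id\times f)_*\nu'$. The hypothesis that $\nu_{\Ga,\psi_1}$ is the \emph{unique} $(\Ga,\psi_1)$-conformal measure on $\La$ then forces $\nu'=\nu_{\Ga,\psi_1}$, so $\nu=(\id\times f)_*\nu_{\Ga,\psi_1}$. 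This gives the first claim.

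For ergodicity, I would invoke the standard principle that a unique conformal measure is ergodic. Writing $\mu=(\id\times f)_*\nu_{\Ga,\psi_1}$, suppose toward a contradiction that $\mu$ is not $\Ga_\rho$-ergodic, so there is a $\Ga_\rho$-invariant Borel set $B\subset\La_\rho$ with $0<\mu(B)<1$. I then form the normalized restrictions $\mu_1=\mu(B)^{-1}\mu|_B$ and $\mu_2=\mu(\La_\rho\setminus B)^{-1}\mu|_{\La_\rho\setminus B}$. Each $\mu_j$ is again $(\Ga_\rho,\sigma_{\psi_1})$-conformal: because $g^{-1}B=B$ modulo $\mu$-null sets for every $g\in\Ga_\rho$, the identity $g^{-1}E\cap B=g^{-1}(E\cap B)$ reduces the Radon--Nikodym computation for $\mu_j$ to that for $\mu$, giving
$$\frac{dg_*\mu_j}{d\mu_j}(\eta)=e^{\sigma_{\psi_1}(\beta_\eta(o,go))}$$
in agreement with \eqref{co}. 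Since $\mu_1$ and $\mu_2$ are distinct---being carried by the disjoint positive-measure sets $B$ and $\La_\rho\setminus B$---this contradicts the uniqueness established above, so $\mu$ must be $\Ga_\rho$-ergodic.

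The corollary is essentially formal once Proposition \ref{lem.pushforward} is in hand, so I do not anticipate a serious obstacle. The only point requiring genuine care is the verification that restricting $\mu$ to an invariant set preserves conformality with the same linear form $\sigma_{\psi_1}$; this hinges entirely on the $\Ga_\rho$-invariance of $B$. Since all the relevant statements are integral identities, interpreting invariance modulo null sets introduces no difficulty.
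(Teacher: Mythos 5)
Your proposal is correct and follows the same route as the paper: uniqueness is read off from the bijective correspondence in Proposition \ref{lem.pushforward}, and ergodicity is the standard consequence of uniqueness (the paper leaves this second step implicit, while you correctly spell out the decomposition of a non-ergodic conformal measure into two distinct normalized restrictions, which is exactly the intended argument).
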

\begin{proof}
   The first claim is clear from  Proposition \ref{lem.pushforward}.
   Ergodicity of $ (\id \times f)_*\nu_{\Ga, \psi_1}$  follows immediately from the uniqueness.
\end{proof}

\begin{Def}[Graph-conformal measures] By
a graph-conformal measure of $\Ga_\rho$, we mean a (conformal) measure
of the form $(\id \times f)_*\nu_{\Ga}$ for some $\Ga$-conformal measure
$\nu_\Ga$ on $\La$.
\end{Def}

Using this terminology, Proposition \ref{lem.pushforward} can be reformulated as follows:
\begin{prop}\label{gc}
 Let $\sigma\in \fa^*$ be a linear form which factors through $\fa_1$.
A
$(\Ga_\rho, \sigma)$-conformal measure on $\La_\rho$  is
a graph-conformal measure of $\Ga_\rho$, and conversely, any graph-conformal measure of $\Ga_\rho$ is of such a form.
    \end{prop}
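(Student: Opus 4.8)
The plan is to observe that Proposition \ref{gc} is essentially a restatement of Proposition \ref{lem.pushforward}, with the only genuine content being the translation between the two ways of describing the relevant linear forms. First I would unwind the hypothesis. The proposition assumes $\sigma \in \fa^*$ factors through $\fa_1$, meaning $\sigma$ vanishes on $\fa_2 = \ker \pi_1$. I would check that this is precisely the condition that $\sigma = \sigma_{\psi_1}$ for some $\psi_1 \in \fa_1^*$: given such a $\sigma$, set $\psi_1 := \sigma|_{\fa_1}$, and then for any $(u_1, u_2) \in \fa_1 \oplus \fa_2$ we have $\sigma(u_1, u_2) = \sigma(u_1, 0) = \psi_1(u_1) = \sigma_{\psi_1}(u_1, u_2)$, using that $\sigma$ kills the $\fa_2$-component. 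Conversely any $\sigma_{\psi_1} = \psi_1 \circ \pi_1$ manifestly factors through $\fa_1$. So the class of linear forms $\sigma$ in the statement coincides exactly with $\{\sigma_{\psi_1} : \psi_1 \in \fa_1^*\}$.

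With that identification in hand, both directions follow directly from Proposition \ref{lem.pushforward}. For the forward direction, let $\nu$ be a $(\Ga_\rho, \sigma)$-conformal measure on $\La_\rho$; writing $\sigma = \sigma_{\psi_1}$, part (2) of Proposition \ref{lem.pushforward} gives a $(\Ga, \psi_1)$-conformal measure $\nu_{\Ga, \psi_1}$ on $\La$ with $\nu = (\id \times f)_* \nu_{\Ga, \psi_1}$, which is by definition a graph-conformal measure of $\Ga_\rho$. For the converse, let $(\id \times f)_* \nu_\Ga$ be any graph-conformal measure, where $\nu_\Ga$ is a $\Ga$-conformal measure on $\La$ associated to some linear form $\psi_1 \in \fa_1^*$; then part (1) of Proposition \ref{lem.pushforward} shows it is a $(\Ga_\rho, \sigma_{\psi_1})$-conformal measure, and $\sigma_{\psi_1}$ factors through $\fa_1$, so it has exactly the claimed form.

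I do not anticipate any serious obstacle, since all the analytic content—the change-of-variables computation relating the Busemann cocycle on $\F$ to its $\fa_1$-component on $\F_1$—has already been carried out in Proposition \ref{lem.pushforward}. The only point requiring a small amount of care is the elementary linear-algebra lemma that ``factors through $\fa_1$'' is equivalent to ``is of the form $\psi_1 \circ \pi_1$,'' which hinges on the direct sum decomposition $\fa = \fa_1 \oplus \fa_2$ and the fact that $\pi_1$ is the projection with kernel $\fa_2$. Beyond that, the proof is purely a matter of quoting the two parts of Proposition \ref{lem.pushforward} and recording that every $\Ga$-conformal measure on $\La$ is associated to some linear form on $\fa_1$, so that the correspondence is exhaustive in both directions.
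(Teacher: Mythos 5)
Your proof is correct and matches the paper's treatment exactly: the paper presents Proposition \ref{gc} as a direct reformulation of Proposition \ref{lem.pushforward}, with no additional argument beyond the identification of ``factors through $\fa_1$'' with ``equals $\psi_1\circ\pi_1$ for some $\psi_1\in\fa_1^*$,'' which you spell out carefully. Nothing is missing.
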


\begin{lem}\label{s2} Suppose $f$ is injective. 
 Let  $\nu_{\Ga, \psi_1}$ and
 $\nu_{\rho(\G),\psi_2}$ be $\Ga$-conformal and $\rho(\Ga)$-conformal measures respectively.

Then $(f^{-1}\times \id )_*\nu_{\rho(\Ga), \psi_2} $ is a $(\Ga_\rho, \sigma_{\psi_2})$-conformal measure, and we have
$$\text{$(f^{-1}\times \id)_*\nu_{\rho(\Ga), \psi_2}\ll(\id\times f)_*\nu_{\Ga,\psi_1}$ if and only if $\nu_{\rho(\Ga),\psi_2}\ll f_*\nu_{\Ga,\psi_1} $}$$
and
$$[(f^{-1} \times \id)_* \nu_{\rho(\Ga), \psi_2}] = [(\id \times f)_* \nu_{\Ga, \psi_1}] \mbox{ if and only if } [\nu_{\rho(\Ga), \psi_2}] = [f_*\nu_{\Ga, \psi_1}].$$

\end{lem}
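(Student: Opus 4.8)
The plan is to exploit the fact that, because $\La_\rho=(\id\times f)(\La)$ is the graph of $f$ over $\La$, both maps appearing in the statement are homeomorphisms onto the same limit set $\La_\rho$. Here I read $(f^{-1}\times\id)$ not as a genuine product map but as the graph embedding $f(\La)\to\La_\rho$, $\eta\mapsto(f^{-1}(\eta),\eta)$, which makes sense since $f$ is injective and continuous on the compact set $\La$, hence a homeomorphism onto $f(\La)=\La_{\rho(\Ga)}$. With this reading, the two embeddings $(\id\times f):\La\to\La_\rho$ and $(f^{-1}\times\id):f(\La)\to\La_\rho$ are intertwined by $f$ through the identity $(f^{-1}\times\id)\circ f=(\id\times f)$ on $\La$, which is the conceptual heart of the argument.

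First I would establish the conformality claim, namely that $(f^{-1}\times\id)_*\nu_{\rho(\Ga),\psi_2}$ is $(\Ga_\rho,\sigma_{\psi_2})$-conformal. This is the mirror image of Proposition \ref{lem.pushforward}, with the two factors of $G=G_1\times G_2$ interchanged, so I would simply repeat that computation. Given $\ga=(g,\rho(g))\in\Gr$, the $\rho$-equivariance $f(g\xi)=\rho(g)f(\xi)$ yields $gf^{-1}(\eta)=f^{-1}(\rho(g)\eta)$, hence $\ga^{-1}(f^{-1}\times\id)(\eta)=(f^{-1}\times\id)(\rho(g)^{-1}\eta)$; this lets me rewrite $\ga_*(f^{-1}\times\id)_*\nu_{\rho(\Ga),\psi_2}$ in terms of $\rho(g)_*\nu_{\rho(\Ga),\psi_2}$. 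Using the product structure of the Busemann map, the $\fa_2$-component of $\beta_{(f^{-1}(\eta),\eta)}(o,\ga o)$ is exactly $\beta_\eta(o_2,\rho(g)o_2)$, so that $\sigma_{\psi_2}(\beta_{\xi}(o,\ga o))=\psi_2(\beta_\eta(o_2,\rho(g)o_2))$ matches the $(\rho(\Ga),\psi_2)$-conformality exponent after the change of variables $\xi=(f^{-1}\times\id)(\eta)$.

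For the two equivalences, I would reduce everything to pushforward under the single homeomorphism $T:=(f^{-1}\times\id):f(\La)\to\La_\rho$. The key identity gives $(\id\times f)_*\nu_{\Ga,\psi_1}=T_*\bigl(f_*\nu_{\Ga,\psi_1}\bigr)$, so both measures being compared are $T$-pushforwards of the measures $\nu_{\rho(\Ga),\psi_2}$ and $f_*\nu_{\Ga,\psi_1}$, which are both supported on $f(\La)$. Since $T$ is a Borel isomorphism onto its image, pushforward by $T$ preserves absolute continuity in both directions: for the forward implication one pulls back null sets along $T$ (valid for any measurable map), and for the reverse one uses injectivity of $T$ to write $T^{-1}(T(A))=A$, so that null sets downstairs pull back to null sets upstairs. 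This gives $T_*\nu_{\rho(\Ga),\psi_2}\ll T_*(f_*\nu_{\Ga,\psi_1})\iff\nu_{\rho(\Ga),\psi_2}\ll f_*\nu_{\Ga,\psi_1}$, which is precisely the first equivalence; applying it in both directions yields the measure-class statement.

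The routine computation in the conformality step aside, the only point requiring genuine care is the reverse direction of the absolute-continuity equivalence, which is where injectivity of $f$ is actually used: without it, $T$ would collapse sets and pushforward could be controlled in only one direction. Thus the main (modest) obstacle is the bookkeeping---pinning down the domains of $f^{-1}$ and of the graph embeddings, and verifying that $f$, $f^{-1}$, and the two graph maps are all homeomorphisms onto their images (which follows from compactness of $\La$ together with injectivity of $f$). Once these identifications are in place, the equivalences are a formal consequence of the functoriality of pushforward under a Borel isomorphism.
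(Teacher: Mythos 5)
Your proposal is correct and follows essentially the same route as the paper: the conformality claim by mirroring the computation of Proposition \ref{lem.pushforward}(1) with the factors interchanged, and the two equivalences via the commutativity $(f^{-1}\times \id)\circ f = \id\times f$ on $\La$, which is exactly the paper's commutative-diagram argument identifying Borel subsets $\tilde E\subset \La_\rho$ with $E\subset\La$ and $f(E)\subset\La_{\rho(\Ga)}$. Your phrasing in terms of functoriality of pushforward under the Borel isomorphism $T=f^{-1}\times\id$ is just a repackaging of the paper's observation that null sets correspond under these graph maps, so the two proofs coincide in substance.
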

\begin{proof} The first claim can be proved similarly as (1) of Proposition \ref{lem.pushforward}.
The second claim follows since any Borel subset $\tilde E$ of $\La_\rho$  is of the form $(\id \times f)(E)=(f^{-1}\times \id)(f(E))$ for a Borel subset $E\subset \La$. Moreover
 $$(\id\times f)_*\nu_{\Ga,\psi_1}(\tilde E) = \nu_{\Ga,\psi_1}(E) \quad \mbox{and} \quad (f^{-1}\times \id)_*\nu_{\rho(\Ga),\psi_2}(\tilde E) = \nu_{\rho(\Ga),\psi_2}(f(E)),$$
as indicated in the following commutative diagram:
$$\begin{tikzcd}
& \F & \\
\La \arrow[ru, "\id \times f"] \arrow[rr, "f"'] & & \La_{\rho(\Ga)} \arrow[lu, "f^{-1} \times \id"']
\end{tikzcd} $$
\end{proof}

\section{Tangent forms for self-joinings} \label{sec.tangent}
For a discrete subgroup $\Delta$ of a semisimple real algebraic group $G$, the growth indicator function $\psi_\Delta$ defined in section \ref{pre} plays an important role in studying $\Delta$-conformal measures. For instance, if there exists a $(\Delta, \psi)$-conformal measure, then $\psi\ge \psi_\Delta$ \cite[Theorem 8.1]{Quint2002Mesures}.

\begin{Def} A linear form $\psi\in \fa^*$  is said to be
tangent to $\psi_\Delta$, or simply a tangent form,
if  $\psi \ge \psi_{\Delta}$ and $\psi(u) = \psi_{\Delta}(u)$ for some $u\in \fa^+ - \{0\}$; note that $u$ must belong to $\L_\Delta$, as $\psi_\Delta=-\infty$ outside
$\L_\Delta$. When we would like to emphasize the role of $u$, we will say that $\psi$ is tangent to $\psi_\Delta$ at $u$.
\end{Def}

For any linear form $\psi$ tangent to $\psi_\Delta$ at a direction of $\inte \fa^+$,
Quint \cite[Theorem 8.4]{Quint2002Mesures} constructed a $(\Delta, \psi)$-conformal measure supported on the limit set $\La_{\Delta}$, generalizing Patterson-Sullivan's construction.

For instance, if $\Delta$ is a lattice in $G$, $\psi_\Delta$ is equal to the sum of all positive roots (counted with multiplicity)  on $\fa^+$ and hence there is only one tangent form at $\inte \fa^+$, which
is $\psi_\Delta$ itself and the corresponding conformal measure
is simply the $K$-invariant probability measure on $\F$. 
In contrast, we show in this section that there are infinitely many tangent forms for self-joinings. 

\subsection*{Tangent forms for self-joinings} We use the same notations as in section \ref{set}.
Let $G_1$ and $ G_2$ be connected semisimple real algebraic groups.
Let $\Ga < G_1$ be a Zariski dense discrete subgroup and
$\rho:\Ga \to G_2$ a discrete faithful Zariski dense representation.
 Let $\L_{\rho}$ and $\psi_{\rho}$ denote the limit cone and the growth indicator function of the self-joining $\Ga_{\rho} < G=G_1\times G_2$ respectively. 

The main goal of this section is to prove the following:
\begin{theorem} \label{thm.manycritical}
If $\Gr$ is Zariski dense in $G$, there are infinitely many linear forms which are tangent to $\psi_\rho$  at directions of $ \inte \fa^+$. 
\end{theorem}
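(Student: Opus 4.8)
The plan is to produce infinitely many tangent forms by exploiting the product structure $\fa = \fa_1 \oplus \fa_2$ together with the fact that the limit cone $\L_\rho$ of a Zariski dense $\Ga_\rho$ is a convex cone with nonempty interior (Benoist, as cited). First I would recall the general principle relating tangent forms to faces of the growth indicator: since $\psi_\rho$ is concave, upper semi-continuous, homogeneous of degree one, positive on $\inte \L_\rho$, and equal to $-\infty$ off $\L_\rho$, its graph over the interior of $\L_\rho$ is a strictly concave-like surface admitting a supporting linear form at each interior direction. Concretely, for each $u \in \inte \fa^+ \cap \inte \L_\rho$ I would like to exhibit a linear form $\psi \in \fa^*$ with $\psi \ge \psi_\rho$ everywhere and $\psi(u) = \psi_\rho(u)$; such supporting forms exist by a Hahn–Banach / supporting-hyperplane argument applied to the convex hypograph of $\psi_\rho$.

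The key step is then to argue that \emph{distinct} interior directions yield \emph{distinct} tangent forms, which is where the higher-rank nature ($\dim \fa \ge 2$) becomes essential. If two interior directions $u \ne u'$ gave the same supporting form $\psi$, then $\psi$ would be tangent to $\psi_\rho$ along the whole segment joining them, forcing $\psi_\rho$ to agree with the linear form $\psi$ on that segment. I would rule this out, or rather count the supporting forms directly, by invoking the strict concavity (or at least the failure of $\psi_\rho$ to be affine on any open subcone) of $\psi_\rho$ for Zariski dense subgroups. The cleanest route is to show that the Gauss-map-type assignment $u \mapsto (\text{supporting form at } u)$ from the spherical section of $\inte \L_\rho$ into the set of tangent forms is not constant, and since $\inte \L_\rho$ is an open cone of dimension $\dim \fa \ge 2$ (here $\dim\fa = \dim\fa_1 + \dim\fa_2 \ge 2$ because $\Ga_\rho$ Zariski dense forces a genuinely two-dimensional cone), its image is infinite.

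The main obstacle I anticipate is establishing the strict-concavity / non-affineness input rigorously: one must know that $\psi_\rho$ is not linear on any open subcone of $\inte \L_\rho$, equivalently that different interior directions genuinely carry different tangent hyperplanes. For this I would lean on the Zariski density of $\Ga_\rho$ via Benoist's theorem (Theorem~\ref{t1}), which guarantees $\L_\rho$ has nonempty interior and, more importantly, rules out degenerate situations where the growth indicator collapses to a single affine piece. A supporting technical lemma would show that if $\psi_\rho$ were tangent to a single linear form at two non-proportional interior directions, the corresponding conformal measures (constructed by Quint's theorem) and their essential subgroups would contradict the genuine two-factor structure; alternatively, one can cite that for Zariski dense self-joinings $\L_\rho$ meets both coordinate directions nontrivially, so the indicator's directional behavior in the $\fa_1$- and $\fa_2$-directions differs.

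Once the non-constancy of the supporting-form assignment is in hand, I would conclude as follows: parametrize the relative interior of a spherical cross-section of $\inte \L_\rho$, which is a connected set of dimension $\ge 1$; the map to tangent forms is continuous and non-constant, hence has infinite image; each such form is tangent to $\psi_\rho$ at an interior direction of $\fa^+$ by construction, giving infinitely many distinct tangent forms and proving Theorem~\ref{thm.manycritical}. I expect the writing to be short modulo the strict-concavity lemma, which is the single real input and which I would isolate as a separate claim drawing on the Zariski density hypothesis.
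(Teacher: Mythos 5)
Your first step coincides with the paper's: applying the supporting hyperplane theorem to the hypograph $\{(u,t): u\in \L_\rho,\ 0\le t\le \psi_\rho(u)\}$ produces, for every $v\in\inte\L_\rho$, a linear form tangent to $\psi_\rho$ at $v$, so that $\psi_\rho(v)=\min_{\psi\in\mathsf{C}_\rho}\psi(v)$, where $\mathsf{C}_\rho$ denotes the set of tangent forms at interior directions. The genuine gap is the step you yourself flag as ``the single real input'': that $\psi_\rho$ is not affine on any open subcone of $\inte\L_\rho$, equivalently that distinct interior directions carry distinct supporting forms. This is not a consequence of Zariski density, and you give no proof of it. Benoist's theorem (Theorem \ref{t1}, \cite{Benoist1997proprietes}) yields only that $\L_\rho$ is convex with nonempty interior, and Quint's results \cite{Quint2002divergence} only that $\psi_\rho$ is concave, upper semicontinuous, and positive on $\inte\L_\rho$; neither rules out affine pieces. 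In fact the general principle you invoke is false: for a Zariski dense product $\Gamma_1\times\Gamma_2<G_1\times G_2$ of rank one groups, the growth indicator is the \emph{linear} form $(u_1,u_2)\mapsto \delta_{\Gamma_1}u_1+\delta_{\Gamma_2}u_2$ on all of $\fa^+$. A product is of course not a self-joining, but no strict concavity or non-affineness theorem is known for general (non-Anosov) self-joinings either --- the paper never proves one. Your two fallback suggestions do not close the gap: the appeal to essential subgroups of Quint's conformal measures is a sketch of an intention, not an argument, and the claim that $\L_\rho$ ``meets both coordinate directions nontrivially'' is unjustified and is actually false for Anosov self-joinings, whose limit cone avoids the walls of $\fa^+$ by Definition \ref{ano}. (A lesser issue: supporting forms need not be unique, so $u\mapsto(\text{supporting form at }u)$ is not a well-defined continuous map without further argument; one would have to work with the subdifferential, which is only upper semicontinuous as a set-valued map.)

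What the paper does instead --- and what your outline is missing --- is a mechanism special to self-joinings that substitutes for strict concavity. Assuming $\#\mathsf{C}_\rho<\infty$, take a segment $\mathsf{A}=[\mathsf{x},\mathsf{y}]$ with $\mathsf{x}\in\fa_2^+$ and $\mathsf{y}\in\fa_1^+$ crossing $\inte\L_\rho$, and let $J=\ov{\mathsf{A}\cap\inte\L_\rho}$. Finiteness plus convex combinations at kinks force $\psi_\rho$ to agree with a single linear form $\psi_1$ on $J$; and if an endpoint of $J$ lay in $\inte\fa^+$, a supporting hyperplane of the cone $\L_\rho$ at that endpoint would give $\varphi\ge 0$ on $\L_\rho$ with $\psi_1+c\varphi$ tangent for all $c>0$, contradicting finiteness, so $J=\mathsf{A}$. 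Then one constructs tangent forms factoring through each projection $\pi_i$: choosing $\Phi\in\fa_1^*$ with $\Phi>\psi_{\Ga}$ on $\fa_1-\{0\}$ gives $\delta_{\Phi\circ\pi_1}<\infty$, and Lemma \ref{s1} makes $\delta_{\Phi\circ\pi_1}\,(\Phi\circ\pi_1)$ tangent to $\psi_\rho$ while vanishing identically on $\fa_2$; similarly on the other factor. Evaluating at $\mathsf{x}$ and $\mathsf{y}$ forces $\psi_1(\mathsf{x})=\psi_1(\mathsf{y})=0$, hence $\psi_\rho=\psi_1\equiv 0$ on $\mathsf{A}$, contradicting $\psi_\rho>0$ on $\inte\L_\rho$. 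Note that this argument never decides whether $\psi_\rho$ is strictly concave; it only exploits the product structure through tangent forms that kill one factor. Without this (or some other) substitute for your non-affineness claim, your argument cannot be completed.
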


\begin{corollary} \label{cor.manyconformal}
For any finite collection $\varphi_1, \cdots, \varphi_n \in \fa^*$, there exists a $(\Gr, \psi)$-conformal measure on $\La_{\rho}$ for some linear form $\psi \in \fa^*-\{ \varphi_i: i = 1, \cdots, n\}$. 
\end{corollary}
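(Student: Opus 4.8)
The plan is to deduce the corollary directly from Theorem \ref{thm.manycritical} together with Quint's existence theorem for conformal measures \cite[Theorem 8.4]{Quint2002Mesures}, which produces, for every linear form tangent to $\psi_\rho$ at a direction of $\inte\fa^+$, a $(\Gr,\psi)$-conformal measure supported on $\La_\rho$. The crucial point is that Theorem \ref{thm.manycritical} furnishes not just one but infinitely many such tangent forms, so the unwanted finite family $\{\varphi_1,\dots,\varphi_n\}$ can be avoided simply by a counting argument.

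Concretely, assuming first that $\Gr$ is Zariski dense in $G$, Theorem \ref{thm.manycritical} provides an infinite set $\{\psi_j:j\in\N\}\subset\fa^*$ of pairwise distinct linear forms, each tangent to $\psi_\rho$ at some direction of $\inte\fa^+$. Since $\{\varphi_1,\dots,\varphi_n\}$ is finite, there exists an index $j_0$ with $\psi_{j_0}\notin\{\varphi_1,\dots,\varphi_n\}$. Applying \cite[Theorem 8.4]{Quint2002Mesures} to $\psi_{j_0}$ then yields a $(\Gr,\psi_{j_0})$-conformal measure on $\La_\rho$, and setting $\psi=\psi_{j_0}$ completes the argument. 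It is essential here that each $\psi_j$ be tangent at an \emph{interior} direction of $\fa^+$, which is exactly what Theorem \ref{thm.manycritical} guarantees, since Quint's construction requires the tangent direction to lie in $\inte\fa^+$.

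The only substantive work lies upstream, in Theorem \ref{thm.manycritical}; given that theorem, the deduction above is routine, so I do not expect a genuine obstacle in the corollary itself. The one point that needs care is that the statement does not explicitly assume Zariski density of $\Gr$. In the complementary case, by Lemma \ref{Zdense} the representation $\rho$ extends to a Lie group isomorphism $\Phi:G_1\to G_2$, so that $\L_\rho$ is contained in the graph $V=\{(u,d\Phi(u)):u\in\fa_1\}$, a proper subspace of $\fa=\fa_1\oplus\fa_2$ on which $\psi_\rho$ is finite, while $\psi_\rho=-\infty$ off $V$. I would then observe that a tangent form for $\psi_\rho|_V$ at an interior direction of $\fa^+$, which exists and can be transported from a tangent form for $\psi_\Ga$ via $\Phi$, extends to all of $\fa$ in an infinite affine family, since its values off $V$ are unconstrained by the inequality $\psi\ge\psi_\rho$. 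This again yields infinitely many tangent forms at interior directions, and the same avoidance-plus-Quint argument applies. Thus in all cases the abundance of interior tangent forms, rather than the passage to conformal measures, is where the content resides.
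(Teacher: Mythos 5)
Your first paragraph is precisely the paper's proof: Theorem \ref{thm.manycritical} gives infinitely many forms tangent to $\psi_\rho$ at directions of $\inte\fa^+$, one of them avoids the finite set $\{\varphi_1,\dots,\varphi_n\}$, and \cite[Theorem 8.4]{Quint2002Mesures} converts it into a $(\Gr,\psi)$-conformal measure on $\La_\rho$. For the case the paper actually proves and uses --- $\Gr$ Zariski dense in $G$ --- there is nothing to add.

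The gap is in your supplementary treatment of the non-Zariski-dense case. You are right that the corollary as stated does not repeat the Zariski density hypothesis; in the paper it is inherited implicitly from Theorem \ref{thm.manycritical}, and the corollary is only ever invoked (in the proof of Theorem \ref{mplast}) under that hypothesis. But your fix does not work as written, for two reasons. First, and mainly: \cite[Theorem 8.4]{Quint2002Mesures} is a statement about \emph{Zariski dense} discrete subgroups --- the paper itself records Quint's construction as being ``for any Zariski dense discrete subgroup'' --- so manufacturing tangent forms of $\psi_\rho$ at interior directions does not allow you to run ``the same avoidance-plus-Quint argument'' for a non-Zariski-dense $\Gr$; tangency alone produces no measure. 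Second, your appeal to Lemma \ref{Zdense} to obtain the extension $\Phi$ requires $G_1,G_2$ simple, whereas the standing assumptions of this section only require them semisimple. If one does want to cover the non-Zariski-dense case (with simple factors), the correct mechanism is already in the paper and needs no tangency at all: take any $(\Ga,\psi_1)$-conformal measure $\nu_\Ga$ on $\La$ (Quint applied to $\Ga<G_1$, which \emph{is} Zariski dense); by Proposition \ref{lem.pushforward}, $(\id\times f)_*\nu_\Ga$ is a $(\Gr,\psi_1\circ\pi_1)$-conformal measure on $\La_\rho$. Normalizing the basepoint so that $o_2=\Phi(o_1)$, every Busemann vector $\beta_\xi(o,\ga o)$ with $\xi\in\La_\rho$, $\ga\in\Gr$ lies in your graph subspace $V$, so this \emph{single} measure is $(\Gr,\psi)$-conformal for every $\psi\in\fa^*$ agreeing with $\psi_1\circ\pi_1$ on $V$ --- an affine family of dimension $\dim\fa_2$. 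This is exactly the non-uniqueness of associated linear forms flagged after Definition \ref{ps}, and it is how the paper itself exhibits $\nu_{\sigma_1}$ as $\sigma_2$-conformal in the second case of Theorem \ref{mplast}; avoidance of the finite set is then immediate.
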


\begin{proof}
By Theorem \ref{thm.manycritical}, we have a  linear form $\psi \notin \{\varphi_1, \cdots, \varphi_n\}$ tangent to
$\psi_\rho$ at $\inte \fa^+$. By \cite[Theorem 8.4]{Quint2002Mesures}, there exists a $(\Gr, \psi)$-conformal measure supported on $\La_{\rho}$.
\end{proof}

For a linear form $\psi\in \fa^*$, we let
$-\infty\le \delta_\psi\le\infty $ denote the abscissa of convergence of the series 
$\sum_{g\in \Delta} e^{-s \psi(\mu(g))} $. 
We will use the following lemma in the proof of Theorem \ref{thm.manycritical}.
\begin{lem}[{\cite[Theorem 2.5]{kim2021tent}}] \label{s1} Let $\Delta<G$ be a Zariski dense discrete subgroup.
For any linear form $\psi\in \fa^*$ such that $\psi|_{\L_\Delta}\ge 0$
and  $\delta_\psi<\infty$, the linear form
$\delta_\psi \psi \in \fa^*$ is tangent to $\psi_\Delta$. 
\end{lem}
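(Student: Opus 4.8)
Write $\delta:=\delta_\psi$ and let $S:=\{u\in\L_\Delta:\|u\|=1\}$ be the (compact) unit slice of the limit cone. The plan is to verify the two defining properties of tangency in turn: first that $\delta\psi\ge\psi_\Delta$ on all of $\fa$, and then that $\delta\psi(u_*)=\psi_\Delta(u_*)$ for some $u_*\in S\subset\fa^+\setminus\{0\}$. The guiding principle is the variational identity $\delta=\sup_{u\in\inte\L_\Delta}\psi_\Delta(u)/\psi(u)$, whose two inequalities I will use as separate counting estimates rather than as an exact equality. Two remarks on the hypotheses will be needed. Since $\psi$ is linear with $\psi|_{\L_\Delta}\ge 0$, it cannot vanish at an interior point of $\L_\Delta$ (it would be negative nearby), so $\psi>0$ on $\inte\L_\Delta$; and $\delta_\psi<\infty$ keeps the supremum finite. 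In particular $P:=\max_S\psi>0$.

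For the domination $\delta\psi\ge\psi_\Delta$, I would first treat $u\in\inte\L_\Delta$, writing $\hat u:=u/\|u\|$. Fixing a narrow open cone $\C\ni u$ and using $\psi(\mu(g))\le(\psi(\hat u)+\e)\|\mu(g)\|$ for $\mu(g)\in\C$, the series $\sum_{g:\mu(g)\in\C}e^{-s\psi(\mu(g))}$ dominates $\sum_{g:\mu(g)\in\C}e^{-s(\psi(\hat u)+\e)\|\mu(g)\|}$, which diverges whenever $s(\psi(\hat u)+\e)<\tau_\C$. Since $\tau_\C\ge\psi_\Delta(u)/\|u\|$ for every cone $\C\ni u$, narrowing $\C$ (so that $\e\downarrow 0$) gives divergence for all $s<\psi_\Delta(u)/\psi(u)$, whence $\delta\ge\psi_\Delta(u)/\psi(u)$, i.e.\ $\delta\psi(u)\ge\psi_\Delta(u)$. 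To pass to $\partial\L_\Delta$ I would use that $\psi_\Delta$ is concave and upper semicontinuous on $\L_\Delta$: for a boundary point $u_0$ and an interior point $v$, concavity forces $\liminf_{t\to 0^+}\psi_\Delta((1-t)u_0+tv)\ge\psi_\Delta(u_0)$, which with upper semicontinuity makes $\psi_\Delta$ continuous along the segment; passing to the limit in the interior inequality then yields $\delta\psi(u_0)\ge\psi_\Delta(u_0)$. Outside $\L_\Delta$ the inequality is vacuous, and it is an equality at $0$, so $\delta\psi\ge\psi_\Delta$ everywhere.

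For tangency I would argue by contradiction, assuming $\delta\psi>\psi_\Delta$ \emph{strictly} on $S$. The function $g:=\delta\psi-\psi_\Delta$ is lower semicontinuous (continuous minus upper semicontinuous) and strictly positive on the compact set $S$, hence attains a positive minimum $c>0$ there. Setting $\delta':=\delta-c/(2P)<\delta$, for every $u\in S$,
\[
\delta'\psi(u)=\delta\psi(u)-\tfrac{c}{2P}\psi(u)\ \ge\ \delta\psi(u)-\tfrac{c}{2}\ \ge\ \psi_\Delta(u)+\tfrac{c}{2},
\]
so $\delta'\psi$ dominates $\psi_\Delta$ on $S$ with a uniform gap, and hence on $\L_\Delta\setminus\{0\}$ by homogeneity. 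By the Quint-type convergence estimate — covering $\fa^+$ by finitely many narrow cones $\C_j$ around directions $u_j\in S$ on which $\tau_{\C_j}<\psi_\Delta(u_j)+\e<\delta'\psi(u_j)-\e$ can be arranged, together with the fact that only finitely many $\mu(g)$ lie in directions bounded away from $\L_\Delta$ — the series $\sum_{g\in\Delta}e^{-\delta'\psi(\mu(g))}$ converges. This forces $\delta_\psi\le\delta'<\delta=\delta_\psi$, a contradiction. Therefore $\delta\psi(u_*)=\psi_\Delta(u_*)$ for some $u_*\in S$, and combined with the domination this shows $\delta_\psi\psi$ is tangent to $\psi_\Delta$ at $u_*\in\fa^+\setminus\{0\}$.

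The main obstacle is the convergence estimate in the last step: one must bound $\sum_g e^{-\delta'\psi(\mu(g))}$ not only in the finitely many cones meeting $\L_\Delta$ (where strict domination and the definition $\psi_\Delta(u)=\|u\|\inf_{u\in\C}\tau_\C$ give convergence cone by cone) but also in directions of $\fa^+$ away from $\L_\Delta$, where $\psi$ may be negative. The crucial input there is Benoist's fact (\cite{Benoist1997proprietes}) that the Cartan projections $\mu(\Delta)$ accumulate only in directions of $\L_\Delta$, so any cone avoiding a neighborhood of $\L_\Delta$ contains only finitely many $\mu(g)$ and contributes a finite sum. Reconciling the mere upper semicontinuity of $\psi_\Delta$ with the uniformity of these cone estimates — in particular choosing each $\C_j$ so that $\tau_{\C_j}$ is simultaneously close to $\psi_\Delta(u_j)$ and strictly below $\delta'\psi$ — is the technical heart of the argument.
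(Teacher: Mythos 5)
Your proposal is correct. The paper gives no proof of this lemma—it is imported from \cite{kim2021tent}—and your two-sided argument (divergence of the cone-restricted series below the ratio $\psi_\Delta(u)/\psi(u)$ to get $\delta_\psi\psi\ge\psi_\Delta$ on $\inte\L_\Delta$ and hence, by concavity plus upper semicontinuity, on all of $\L_\Delta$; then, assuming strict domination, a uniform gap $c/2$ from the lower semicontinuous minimum on the compact unit slice, a finite cover by narrow cones with $\tau_{\C_j}$ close to $\psi_\Delta(u_j)$, and Benoist's fact that $\mu(\Delta)$ accumulates only in directions of $\L_\Delta$, yielding convergence at $\delta'<\delta_\psi$ and a contradiction) is essentially the standard Quint-style proof given in that reference.
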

\medskip
\noindent{\bf Proof of Theorem \ref{thm.manycritical}.}
For convenience, we let $\mathsf C_\rho$ be the space of all linear forms tangent to $\psi_\rho$ at $\inte \fa^+$. Since $\psi_\rho$ is concave,
the subset
$S:=\{(u, t) : u \in \L_{\rho}, 0 \le t \le \psi_{\rho}(u)\}$ 
is a convex subset. Hence for each $v \in \inte \L_{\rho}$, we may
apply the supporting hyperplane theorem\footnote{The supporting hyperplane theorem says that for any convex subset $S \subset \R^n$ and $x_0 \in \partial S$, there exists a hyperplane $H \subset \R^n$ containing $x_0$ such that  $S$ is contained in a closed half-space bounded by $H$ (cf. \cite[Corollary IV.3.4]{Conway_functional}).} to $S$
 to get a linear form $\psi_v \in \fa^*$ tangent to $\psi_\rho$ at $v$. Hence $\psi_v \in \mathsf C_{\rho}$ and  $$\psi_{\rho}(v) = \min_{\psi \in \mathsf C_{\rho}} \psi(v).$$

Since $\Gr$ is Zariski dense, we have $\inte \L_{\rho}\ne \emptyset$.
Choose unit vectors $\mathsf{y} \in \fa_1^+$ and $\mathsf{x} \in \fa_2^+$ so that the line segment $\mathsf{A} = [\mathsf x,\mathsf y]=
\{ (1-t)\mathsf{x} + t\mathsf{y} : 0 \le t \le 1\}$ intersects $\inte \L_{\rho}$. Since $\L_{\rho}$ is convex,  the intersection $\mathsf{A} \cap \inte \L_{\rho}$ is an open line segment. Let $J$ be the closure of $\mathsf{A} \cap \inte \L_{\rho}$.
We write $J=[v_1, v_2]$ where  $v_1
=(1-t_1)\mathsf x+t_1 \mathsf y$ and $ v_2=(1-t_2) \mathsf x+t_2 \mathsf y\in \mathsf A$ are
two endpoints of $J$ and $t_1<t_2$.

We now suppose that \be \label{cc} \# \mathsf C_{\rho} < \infty.\ee 
Since $\psi_{\rho}$ is concave and upper semi-continuous, it is continuous on the interval $J$. Hence the finiteness assumption on $\mathsf{C}_{\rho}$ implies that there exists a finite partition of $J=[v_1, v_2]$ into $v_1 = w_0<w_1<\cdots < w_n=v_2$  and 
pairwise distinct $$\psi_1, \cdots, \psi_n \in \mathsf C_{\rho}$$  such that
$\psi_\rho=\psi_\ell$ on each $[w_{\ell-1}, w_{\ell}]$ for $1\le \ell \le n$.

\medskip
{\noindent \bf Claim (1): $n = 1$.}
If  $n \ge 2$, then $w_1 \in \inte J$, and hence we find infinitely many elements of $\mathsf C_\rho$ by considering convex linear combinations
$$(1-c)\psi_1 + c\psi_2 $$ for all $0 < c < 1$; hence $n=1$.

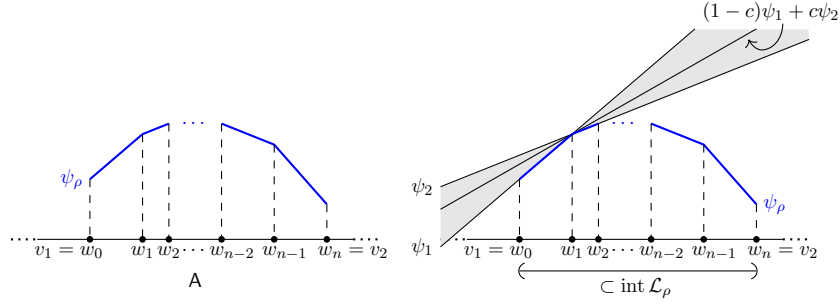
\begin{figure}[h]
\begin{tikzpicture}[scale=0.7, every node/.style={scale=0.7}]

        \draw[white] (-2, -0.6) -- (2.5, -0.6);
        \draw[white] (-1.9, -0.45) .. controls (-2.05, -0.5) and (-2.05, -0.7) .. (-1.9, -0.75);
        \draw[white] (2.4, -0.45) .. controls (2.55, -0.5) and (2.55, -0.7) .. (2.4, -0.75);
        \draw[white] (0.2, -0.6) node[below] {$\subset \inte \L_{\rho}$};

        \draw[thick, dotted] (-3.5, 0) -- (-3, 0);
        \draw[thick, dotted] (3, 0) -- (3.5, 0);
        \draw (-3, 0) -- (3, 0);
        \draw (0, -0.5) node[below] {$\mathsf A$};


        \draw[thick, blue] (-2, 8/7) -- (-1, 2)  -- (-0.5, 2.2);
        \draw[blue] (0, 2.2) node {$\cdots$};
        \draw[thick, blue]  (0.5, 2.2) -- (1.5, 1.8) -- (2.5, 2/3);
        \draw[blue] (-2, 8/7) node[left] {$\psi_{\rho}$};

        \draw[dashed] (-2, 8/7) -- (-2, 0);
        \filldraw (-2, 0) circle(1.5pt);
        \draw (-2.4, 0) node[below] {$v_1 = w_0$};

        \draw[dashed] (-1, 2) -- (-1, 0);
        \filldraw (-1, 0) circle(1.5pt);
        \draw (-1, 0) node[below] {$w_1$};
        
        \draw[dashed] (-0.5, 2.2) -- (-0.5, 0);
        \filldraw (-0.5, 0) circle(1.5pt);
        \draw (-0.5, 0) node[below] {$w_2$};

        \draw (0, 0) node[below] {$\cdots$};
        
        \draw[dashed] (0.5, 2.2) -- (0.5, 0);
        \filldraw (0.5, 0) circle(1.5pt);
        \draw (0.7, 0) node[below] {$w_{n-2}$};
        
        \draw[dashed] (1.5, 1.8) -- (1.5, 0);
        \filldraw (1.5, 0) circle(1.5pt);
        \draw (1.7, 0) node[below] {$w_{n-1}$};
        
        \draw[dashed] (2.5, 2/3) -- (2.5, 0);
        \filldraw (2.5, 0) circle(1.5pt);
        \draw (3, 0) node[below] {$w_{n}=v_2$};

    \end{tikzpicture}
    \begin{tikzpicture}[scale=0.7, every node/.style={scale=0.7}]

        \draw[thick, dotted] (-3.5, 0) -- (-3, 0);
        \draw[thick, dotted] (3, 0) -- (3.5, 0);
        \draw (-3, 0) -- (3, 0);

        \draw (-2, -0.6) -- (2.5, -0.6);
        \draw (-1.9, -0.45) .. controls (-2.05, -0.5) and (-2.05, -0.7) .. (-1.9, -0.75);
        \draw (2.4, -0.45) .. controls (2.55, -0.5) and (2.55, -0.7) .. (2.4, -0.75);
        \draw (0.2, -0.6) node[below] {$\subset \inte \L_{\rho}$};



        \draw[draw=white, fill=gray!20, opacity=0.5] (-3.5, -1/7) -- (-1, 2) -- (-3.5, 1);
        \draw[draw=white, fill=gray!20, opacity=0.5] (-1, 2) -- (4/3, 4) -- (3.5, 4) -- (3.5, 3.8);

        \draw (-3.5, -1/7) -- (4/3, 4);
        \draw (-3.5, -1/7) node[left] {$\psi_1$};
        
        \draw (-3.5, 1) -- (3.5, 3.8);
        \draw (-3.5, 1) node[left] {$\psi_2$};

        \draw (-3.5, 4/7) -- (5/2, 4);
        \draw (2.775, 4) node[above] {$(1-c)\psi_1 + c \psi_2$};
        \draw[->] (3, 4.1) .. controls (3, 3.8) and (2.8, 3.6) .. (2.6, 3.6) .. controls (2.5, 3.6) and (2.4, 3.7) .. (2.35, 3.8); 

        \draw[thick, blue] (-2, 8/7) -- (-1, 2)  -- (-0.5, 2.2);
        \draw[blue] (0, 2.2) node {$\cdots$};
        \draw[thick, blue]  (0.5, 2.2) -- (1.5, 1.8) -- (2.5, 2/3);
        \draw[blue] (2.5, 2/3) node[right] {$\psi_{\rho}$};

        \draw[dashed] (-2, 8/7) -- (-2, 0);
        \filldraw (-2, 0) circle(1.5pt);
        \draw (-2.4, 0) node[below] {$v_1 = w_0$};

        \draw[dashed] (-1, 2) -- (-1, 0);
        \filldraw (-1, 0) circle(1.5pt);
        \draw (-1, 0) node[below] {$w_1$};
        
        \draw[dashed] (-0.5, 2.2) -- (-0.5, 0);
        \filldraw (-0.5, 0) circle(1.5pt);
        \draw (-0.5, 0) node[below] {$w_2$};

        \draw (0, 0) node[below] {$\cdots$};
        
        \draw[dashed] (0.5, 2.2) -- (0.5, 0);
        \filldraw (0.5, 0) circle(1.5pt);
        \draw (0.7, 0) node[below] {$w_{n-2}$};
        
        \draw[dashed] (1.5, 1.8) -- (1.5, 0);
        \filldraw (1.5, 0) circle(1.5pt);
        \draw (1.7, 0) node[below] {$w_{n-1}$};
        
        \draw[dashed] (2.5, 2/3) -- (2.5, 0);
        \filldraw (2.5, 0) circle(1.5pt);
        \draw (3, 0) node[below] {$w_{n}=v_2$};
    \end{tikzpicture}
    \caption{Partition of $J$ (left) and when $n \ge 2$ (right)} \label{fig.trivialpartition}
\end{figure}

\medskip
{\noindent \bf Claim (2): $J = \mathsf{A}$.}
This claim is the same as $v_1, v_2 \notin \inte \fa^+$. Suppose that $v_i \in \inte \fa^+$. Since $v_i \in \partial \L_{\rho}$, it follows from the supporting hyperplane theorem applied to $\L_{\rho}$ that there exists a hyperplane $\sf P$ tangent to $\L_{\rho}$ at $v_i$. Since $\L_{\rho}$ is a closed cone in $\fa^+$, $\sf P$ is a linear subspace. Hence there exists a linear form $\varphi \in \fa^*$ such that $\ker \varphi = \mathsf P$ and $\varphi \ge 0$ on $\L_{\rho}$. In particular, $\varphi > 0$ on $\inte \L_{\rho}$. Then for any $c > 0$, the linear combination $\psi_1 + c \varphi$ belongs to $ \mathsf C_{\rho}$, yielding a contradiction to \eqref{cc}.

\medskip
{\noindent \bf Claim (3): $\psi_{\rho} = 0$ on $J = \mathsf{A}$.} By Claims (1) and (2), $\psi_{\rho} = \psi_1$ on $\mathsf{A}$. 
Since the growth indicator function
$\psi_{\Ga}$ is upper semi-continuous, it attains a maximum on the unit sphere of $\fa_1$. Hence we may choose $\Phi \in \fa_1^*$ such that
 $\Phi > \psi_{\Ga}$ on $\fa_1 - \{0\}$. By \cite[Lemma 3.1.3]{Quint2002divergence}, we have $\delta_{\Phi}<\infty$.
We now set $\tilde \varphi_1:=\Phi\circ \pi_1$. Since $\tilde \varphi_1 (\mu((g, \rho(g)))) = \Phi (\mu(g))$ for $g \in \Ga$, we have $$\delta_{ \tilde \varphi_1} = \delta_{\Phi} < \infty.$$ 
Since $\pi_1(\L_{\rho}) \subset \fa_1^+$ and $\Phi > 0$ on $\fa_1^+-\{0\}$, we have
$\tilde \varphi_1\ge 0$ on $\L_\rho$. 
Hence by Lemma \ref{s1}, $\varphi_1 := \delta_{\tilde \varphi_1} \tilde {\varphi}_1$ is tangent to $\psi_{\rho}$; in particular, 
\be \label{eqn.sigma1}
\psi_{\rho} \le \varphi_1 \quad \mbox{and} \quad \varphi_1|_{\fa_2} = 0.
\ee
Similarly, we have a linear form $\varphi_2 \in \fa^*$ which is tangent to $\psi_{\rho}$ and factors through  $\pi_2 : \fa \to \fa_2$. It implies \be \label{eqn.sigma2}
\psi_{\rho} \le \varphi_2 \quad \mbox{and} \quad \varphi_2|_{\fa_1} = 0.
\ee
 Since $\mathsf x \in \L_{\rho} \cap \mathsf A \cap \fa_2$ and $\mathsf y \in \L_{\rho} \cap \mathsf A \cap \fa_1$,
we deduce from  \eqref{eqn.sigma1} and \eqref{eqn.sigma2}:
$$
0 \le \psi_1(\mathsf x) = \psi_{\rho}(\mathsf x) \le \varphi_1(\mathsf x) = 0 \quad \mbox{and} \quad  0  \le \psi_1(\mathsf y) = \psi_{\rho}(\mathsf y) \le \varphi_2( \mathsf y) = 0
.$$ 
Since $\psi_1$ is a linear form and $\mathsf{A} = [\mathsf x,\mathsf y]$, we have $$\psi_{\rho} = \psi_1 = 0 \mbox{ on } \mathsf{A}.$$

\medskip
{\noindent \bf Finishing the proof.} Since $\mathsf A$ intersects $\inte \L_{\rho}$ and $\psi_{\rho} > 0$ on $\inte \L_{\rho}$ by \cite[Theorem 1.1.1]{Quint2002divergence}, Claim (3) yields a contradiction. Therefore, $\# \mathsf C_{\rho} = \infty$, finishing the proof.

\section{Weak-Myrberg limit sets of self-joinings}\label{swM}
Let $G_1$ be a connected simple real algebraic group of rank one and $(X_1, d_1)$ the associated rank one symmetric space. Let $\Ga < G_1$ be a non-elementary  discrete subgroup with limit set $\La = \La_\Ga \subset \F_1$.

\subsection*{Divergence-type groups} Fix $o_1\in X_1$.
If  the Poincar\'e series of $\Ga$ diverges at the critical exponent $s=\delta_\Ga$, i.e.,
$\sum_{\ga\in \Ga} e^{-\delta_{\Ga} d_1(o_1, \ga o_1)}=\infty$, then $\Ga$ is said to be of
divergence type.
\begin{theorem}[\cite{Sullivan1979density}, see also {\cite[Corollary 1.8]{Roblin2003ergodicite}}] \label{thm.uniquepsdiv}
If $\Ga$ is of divergence type, there exists a unique $\Ga$-conformal measure
of dimension $\delta_\Ga$ with respect to $o_1 \in X_1$.
 In particular, it is $\Ga$-ergodic.
\end{theorem}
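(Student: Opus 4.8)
The plan is to obtain existence through the Patterson--Sullivan construction and to deduce uniqueness from the $\Ga$-ergodicity of the resulting measure, which is precisely where the divergence-type hypothesis is used. For the existence, for each $s>\delta_\Ga$ I would form the normalized orbital measure $\mu_s = P(s)^{-1}\sum_{\ga\in\Ga} e^{-s d_1(o_1,\ga o_1)}\,\delta_{\ga o_1}$ on the compactification $\ov{X_1}=X_1\cup\F_1$, where $P(s)=\sum_{\ga\in\Ga}e^{-s d_1(o_1,\ga o_1)}$ is the Poincar\'e series, and extract a weak-$*$ limit $\nu_\Ga$ along some $s_k\downarrow\delta_\Ga$. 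Because $\Ga$ is of divergence type, $P(s)\to\infty$ as $s\downarrow\delta_\Ga$, so no mass can accumulate on a compact part of $X_1$; hence $\nu_\Ga$ is a probability measure carried by $\F_1$ and, by $\Ga$-equivariance, by $\La$. Computing $d\ga_*\mu_s/d\mu_s$ from the cocycle identity for the Busemann function and passing to the limit $s_k\downarrow\delta_\Ga$ then shows $d\ga_*\nu_\Ga/d\nu_\Ga(\xi)=e^{\delta_\Ga\beta_\xi(o_1,\ga o_1)}$, so that $\nu_\Ga$ is $(\Ga,\delta_\Ga)$-conformal; divergence makes Patterson's slowly-varying correction unnecessary.

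The technical core is Sullivan's shadow lemma: for any $(\Ga,\delta_\Ga)$-conformal measure $\nu$ (automatically non-atomic since $\Ga$ is non-elementary) there are $R_0$ and, for each $R\ge R_0$, a constant $C\ge1$ with $C^{-1}e^{-\delta_\Ga d_1(o_1,\ga o_1)}\le \nu(\mathcal O_R(\ga o_1))\le C e^{-\delta_\Ga d_1(o_1,\ga o_1)}$ for all $\ga\in\Ga$, where $\mathcal O_R(\ga o_1)\subset\F_1$ is the shadow, viewed from $o_1$, of the $R$-ball around $\ga o_1$. Combined with the divergence $\sum_\ga e^{-\delta_\Ga d_1(o_1,\ga o_1)}=\infty$ and a Borel--Cantelli/Vitali covering argument, this gives full $\nu$-measure to the conical limit set. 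I would then run the Hopf density-point argument: for a $\Ga$-invariant Borel set $E$ with $\nu(E)>0$, pick a conical density point $\xi$ of $E$, a sequence $\ga_n$ with $\ga_n o_1\to\xi$ radially and shadows $V_n\ni\xi$ of diameter tending to $0$; translating $E\cap V_n$ by $\ga_n^{-1}$ and using the bounded conformal distortion of $\nu$ converts the density $\nu(E\cap V_n)/\nu(V_n)\to1$ into the conclusion $\nu(E^c)=0$. Thus $(\La,\nu)$ is $\Ga$-ergodic; applied to $\nu=\nu_\Ga$ this gives the final assertion of the theorem.

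For the uniqueness, given two $(\Ga,\delta_\Ga)$-conformal measures $\nu_1,\nu_2$, their average $\nu=\tfrac12(\nu_1+\nu_2)$ is again $(\Ga,\delta_\Ga)$-conformal and dominates both, so the derivatives $\phi_i:=d\nu_i/d\nu$ are defined. Since $\nu_i$ and $\nu$ share the dimension $\delta_\Ga$, the conformal factors cancel and each $\phi_i$ is $\Ga$-invariant $\nu$-a.e.; by the ergodicity just established each $\phi_i$ is $\nu$-a.e.\ constant, and as all three are probability measures this forces $\nu_1=\nu_2$. The main obstacle is the ergodicity step: it demands the shadow lemma with constants uniform in $\ga$, non-atomicity of the conformal measure, and careful control of the conformal distortion along radial sequences, and it is exactly here that the divergence-type hypothesis is indispensable.
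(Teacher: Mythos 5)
Your proposal is correct, and it is essentially the classical Patterson--Sullivan--Roblin argument; note that the paper gives no proof of Theorem \ref{thm.uniquepsdiv} at all, but quotes it from \cite{Sullivan1979density} and \cite[Corollary 1.8]{Roblin2003ergodicite}, and your three steps (Patterson-type construction with no correction term thanks to divergence, full measure of the conical set via the shadow lemma, then ergodicity and uniqueness by the averaging trick) are exactly the steps of those sources. The one place you take a somewhat different route is the ergodicity step: Sullivan and Roblin deduce ergodicity of the $\Ga$-action on the boundary from conservativity and the Hopf ratio argument for the geodesic flow with respect to the Bowen--Margulis--Sullivan measure (this is packaged in the Hopf--Tsuji--Sullivan dichotomy, quoted as Theorem \ref{thm.divergence} in the paper), whereas you run a Hopf density-point argument directly on the boundary with shadows; this variant is legitimate, somewhat more self-contained if one only wants boundary ergodicity, and is in fact close in spirit to the covering/density machinery (Lemma \ref{lem.covering}, Proposition \ref{prop.submerge}) that the paper itself adapts from \cite{lee2020invariant}. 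Two small imprecisions, both repairable: (i) non-elementarity does not automatically make a conformal measure non-atomic --- what it gives (via the absence of a global fixed point and of finite orbits in the boundary) is that the measure is not a Dirac mass, which is exactly what the lower bound in the shadow lemma needs; genuine non-atomicity then follows a posteriori, since conical points cannot be atoms by the shadow-lemma decay and they carry full measure. (ii) The quasi-independence/second Borel--Cantelli argument by itself only yields $\nu(\La_{\Ga,c})>0$; to upgrade to full measure one applies that positivity statement to the normalized restriction of $\nu$ to the $\Ga$-invariant set $\La-\La_{\Ga,c}$, which would again be a $\delta_\Ga$-dimensional conformal measure charging no conical point --- a contradiction unless that set is null. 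With these two points patched (or simply by invoking Theorem \ref{thm.divergence} as the paper does), your proof is complete.
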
 
We denote by $\La_{\Ga, c}\subset \La_{\Ga}$ the set of conical limit points of $\Ga$. That is,
$\xi\in \La_{\Ga, c}$ if and only if  any geodesic ray toward $\xi$ accumulates on a compact subset of $\Ga\ba X_1$.
Let $$\La_{\Ga, M}\subset \La_{\Ga, c}$$ denote the set of Myrberg limit points for $\Ga$, that is,
 $\xi\in \La_{\Ga, M}$  if and only if any geodesic ray toward $\xi$ is dense
in the union of all geodesics connecting limit points, modulo
$\Ga$. Equivalently,
   $\xi\in \La_{\Ga, M} $ if and only if for any $\eta\ne  \eta' \in  \La$, there exists a sequence $\ga_{\ell} \in \Ga$ such that $\ga_{\ell} \xi \to \eta$ and $\ga_{\ell} o_1 \to \eta'$ as $\ell \to \infty$.

The Hopf-Tsuji-Sullivan dichotomy \cite{Sullivan1979density} (see also \cite[Theorem 1.7]{Roblin2003ergodicite}) states: 

\begin{theorem} \label{thm.divergence}
 The following are equivalent to each other for any $\Ga$-conformal measure $\nu_\Ga$ of dimension $\delta_\Ga$:
\begin{enumerate}
\item $\Ga$ is of divergence type;
\item $\nu_{\Ga} (\La_{\Ga, c})=1$;
\item $\nu_{\Ga} (\La_{\Ga, M})=1$.
\end{enumerate}
\end{theorem}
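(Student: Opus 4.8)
The plan is to deduce the dichotomy from Sullivan's shadow lemma together with Borel--Cantelli arguments and the ergodicity recorded in Theorem \ref{thm.uniquepsdiv}. First I would establish the shadow lemma for the $\delta_\Ga$-dimensional conformal measure $\nu_\Ga$: writing $\O_R(\ga)\subset \F_1$ for the shadow, viewed from $o_1$, of the ball $B(\ga o_1, R)\subset X_1$, there is $R_0>0$ so that for every $R\ge R_0$ there is a constant $C=C(R)\ge 1$ with
$$C^{-1} e^{-\delta_\Ga d_1(o_1, \ga o_1)} \le \nu_\Ga(\O_R(\ga)) \le C\, e^{-\delta_\Ga d_1(o_1, \ga o_1)} \quad \text{for all } \ga \in \Ga.$$
The upper bound uses the conformality relation \eqref{co} together with the comparison $\beta_\eta(o_1, \ga o_1)=d_1(o_1, \ga o_1)+O(R)$ valid for $\eta$ in the shadow; the lower bound additionally uses the full support of $\nu_\Ga$ on $\La$ and a compactness argument ensuring that the reverse shadows $\O_R(\ga^{-1} o_1)$ carry mass bounded below uniformly.

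Next I would record the shadow description of the conical set: from the recurrence definition of $\La_{\Ga,c}$ one has $\La_{\Ga,c}=\limsup_{\ga\in\Ga}\O_R(\ga)$ for any fixed $R\ge R_0$. The implication $(2)\Rightarrow(1)$ is then the easy half. If $\Ga$ is of convergence type, the shadow lemma gives $\sum_\ga \nu_\Ga(\O_R(\ga))<\infty$, so the convergence Borel--Cantelli lemma forces $\nu_\Ga(\La_{\Ga,c})=0$; contrapositively, $(2)$ implies divergence type.

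The hard direction is $(1)\Rightarrow(2)$, where divergence of $\sum_\ga e^{-\delta_\Ga d_1(o_1,\ga o_1)}$ must be upgraded to full conical measure, and I expect this to be the main obstacle, since a naive Borel--Cantelli fails for the limsup without independence. The standard route is to pass to the geodesic flow on $T^1(\Ga\ba X_1)$ and show that divergence type is equivalent to conservativity of the associated Bowen--Margulis--Sullivan measure, via a Hopf-type maximal/ratio argument: one bounds the Poincar\'e series below by the integral of a return cocycle and uses the divergence to rule out a wandering set of positive measure. Conservativity means that almost every geodesic returns to a fixed compact part of $\Ga\ba X_1$ infinitely often, which after projecting to the boundary is precisely $\nu_\Ga(\La_{\Ga,c})=1$; the ergodicity of $\nu_\Ga$ from Theorem \ref{thm.uniquepsdiv} turns this into a genuine $0$--$1$ law rather than mere positivity.

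Finally, for $(2)\Leftrightarrow(3)$ I would use that $\La_{\Ga,M}\subset\La_{\Ga,c}$ makes $(3)\Rightarrow(2)$ immediate, while $(2)\Rightarrow(3)$ follows from ergodicity of the $\Ga$-action on the double boundary $(\La\times\La,\,\nu_\Ga\times\nu_\Ga)$, equivalently of the geodesic flow. Concretely, the Myrberg condition asks that the $\Ga$-orbit of $\xi$ approximate every prescribed pair $(\eta,\eta')$ of distinct limit points; covering $\La\times\La$ by countably many basic open sets and applying the pointwise ergodic theorem to the indicator of each, one shows that $\nu_\Ga$-almost every $\xi$ meets each such set along its orbit, whence $\nu_\Ga(\La_{\Ga,M})=1$. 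The only subtlety here is upgrading ``the orbit enters each basic set'' to the uniform density demanded in the definition of $\La_{\Ga,M}$, which is handled by a countable-intersection diagonal argument over a basis of $\La\times\La$.
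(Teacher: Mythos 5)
You should first be aware that the paper does not prove Theorem \ref{thm.divergence} at all: it is quoted as the classical Hopf--Tsuji--Sullivan dichotomy, with references to \cite{Sullivan1979density} and \cite[Theorem 1.7]{Roblin2003ergodicite}. So your proposal must be measured against the standard proofs in those sources, whose outline (shadow lemma plus Borel--Cantelli for one direction, geodesic-flow conservativity and the Hopf argument for the other) you do reproduce correctly at the level of strategy. The easy half is essentially fine: the shadow lemma as you state it is standard for non-elementary groups, and the convergence-type Borel--Cantelli argument works, modulo the small imprecision that for fixed $R$ one only has $\limsup_{\ga} O_R(\ga)\subset \La_{\Ga,c}$, with $\La_{\Ga,c}$ equal to the increasing union over $R$ of these limsups; taking a countable union over $R=n$ repairs this harmlessly.

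The genuine gap is in the hard directions, where you assume precisely the content of the theorem. For $(2)\Rightarrow(3)$ you invoke ``ergodicity of the $\Ga$-action on the double boundary $(\La\times\La,\,\nu_\Ga\times\nu_\Ga)$, equivalently of the geodesic flow.'' This is \emph{not} the ergodicity supplied by Theorem \ref{thm.uniquepsdiv}, which concerns the $\Ga$-action on $(\La,\nu_\Ga)$ alone; ergodicity of the diagonal action on the double boundary (equivalently, of the geodesic flow with respect to the Bowen--Margulis--Sullivan measure) is strictly stronger, and it is exactly the deepest clause of the Hopf--Tsuji--Sullivan theorem, obtained via the Hopf ratio-ergodic-theorem argument only \emph{after} conservativity has been established. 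Your proposal never carries out this step. Moreover, leaning on Theorem \ref{thm.uniquepsdiv} at all is circular: in the literature that statement is \cite[Corollary 1.8]{Roblin2003ergodicite}, which is \emph{deduced from} the dichotomy \cite[Theorem 1.7]{Roblin2003ergodicite} you are trying to prove. A parallel problem affects $(1)\Rightarrow(2)$: the substance there is ``divergence implies conservativity,'' and your sketch has the inequality pointing the wrong way --- to rule out a wandering set of positive measure one needs the Poincar\'e series bounded \emph{above} by (a multiple of) a return-time integral that is finite on the dissipative part, i.e.\ one proves dissipativity $\Rightarrow$ convergence; a \emph{lower} bound on the Poincar\'e series by a return integral yields nothing from divergence. (Incidentally, once conservativity holds almost everywhere, full conical measure follows by exhausting the quotient by compact sets; no appeal to ergodicity is needed for that upgrade.) As written, then, your proposal is a correct road map of the classical proof, but the two steps that constitute its actual mathematical content --- divergence $\Rightarrow$ conservativity, and conservativity $\Rightarrow$ ergodicity of the flow/double-boundary action --- are asserted rather than proven.
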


A non-elementary discrete subgroup $\Ga<G_1$ is called {\it geometrically finite} (resp. convex cocompact) if
the unit neighborhood of the convex core of $\Ga\ba X_1$ has finite volume (resp. compact).
As remarked before, geometrically finite groups are of divergence type (\cite[Proposition 2]{Sullivan1984entropy}, \cite[Proposition 3.7]{Corlette1999limit}). 
Moreover, if $\Ga$ is a convex cocompact subgroup of $\so(n,1)=\Isom^+(\bH^n_{\br})$, then $\nu_\Ga$ is the $\delta_\Ga$-dimensional
Hausdorff measure on $\La$ with respect to the spherical metric on $\S^{n-1}$ \cite[Theorem 8]{Sullivan1979density}.

\subsection*{Weak-Myrberg limit set of $\Ga_\rho$}
In the rest of the section, we assume that 
$$\text{$\Ga<G_1$ is Zariski dense and
of divergence type.}$$

Let $\rho:\Ga \to G_2$ be a discrete faithful Zariski dense representation where $G_2$ is a connected semisimple real algebraic group. We use the same notations as in section \ref{set}. Let $X_2$ be
the Riemannian symmetric space associated to $G_2$. Let $G=G_1\times G_2$.
We assume that there exists a $\rho$-equivariant continuous map
$f:\La \to \F_2$. As before, we denote by
$\Ga_\rho$ the associated self-joining subgroup of $G$.  We denote by $\F^{(2)}$ the subset of $\F\times \F$ consisting of all pairs in general position.
We set $$\La_{\rho}^{(2)} =\cal F^{(2)}\cap  (\La_{\rho} \times \La_{\rho}).$$
Note that if $f$ is injective in addition, then $\La_{\rho}^{(2)}=(\La_{\rho} \times \La_{\rho})-\text{Diagonal} $. However we are not assuming the injectivity of $f$.

We recall from \cite{lee2020invariant} that
the \emph{Myrberg limit set} of $\Ga_\rho$ is defined as
     $$\La_{\rho, M} := \left\{ \xi\in \La_{\rho} : \begin{matrix}
\forall (\xi_0, \eta_0)\in \La_{\rho}^{(2)},\ \exists  \text{ a sequence }  \ga_\ell \in \Ga_\rho \mbox{ s.t.}\\
\lim \ga_\ell \xi = \xi_0 \quad \mbox{and} \quad \lim \ga_\ell o = \eta_0
\end{matrix}
\right\}.$$
\begin{definition}
   We introduce the \emph{weak-Myrberg limit set}  of $\Gr$  as follows:
     $$\La_{\rho, wM} := \bigcap_{\ga_0\in \Ga_\rho:\text{loxodromic}} \La_{\rho}(\ga_0)$$
    where
$$\La_{\rho}(\ga_0)=\left\{ \xi\in \La_{\rho} : \begin{matrix} \forall \eta \in \La_{\rho} \text{ with }(y_{\ga_0}, \eta) \in \La_{\rho}^{(2)}, \ \exists   \text{ a sequence }  \ga_\ell \in \Ga_\rho  \\ \mbox{ s.t.}
\lim \ga_\ell \xi = y_{\ga_0}\quad \mbox{and} \quad
\lim \ga_\ell o = \eta
\end{matrix}
\right\}.$$
Note that the definition does not depend on the choice of the basepoint $o$.

\end{definition}

 While arbitrary pairs $(\xi_0, \eta_0) \in \La_{\rho}^{(2)}$ are considered in the definition of the Myrberg limit set $\La_{\rho, M}$, only special pairs of the form $(y_{\ga_0}, \eta) \in \La_{\rho}^{(2)}$ with $y_{\ga_0}$ the attracting fixed point of a loxodromic $\ga_0 \in \Ga_{\rho}$ are involved in defining the weak-Myrberg limit set $\La_{\rho, wM}$. Note that 
$$\La_{\rho, M} \subset \La_{\rho, wM} \subset \La_{\rho}.$$

\begin{Def} \label{contt} We say $f$ is a {\it continuous} extension of $\rho$
if for any sequence $g_{\ell} \in \Ga$,
the convergence $g_\ell o_1\to
\xi$  implies the convergence $\rho(g_\ell) o_2\to f(\xi)$
in the sense of Definition \ref{conv}. 
\end{Def}

The rest of this section is devoted to proving the following:
\begin{prop} \label{prop.limitsetlox}\label{wM} 
Suppose either that $\op{rank}G_2=1$ or that
$f$ is a continuous extension of $\rho$. Then $$(\id \times f)_* \nu_{\Ga}(\La_{\rho, wM}) = 1.$$
\end{prop}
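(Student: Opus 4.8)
The plan is to transport the full-measure statement from the Myrberg limit set of $\Ga$ in the rank one boundary $\F_1$ to the weak-Myrberg limit set of $\Gr$ through the graph embedding $\id\times f$. Since $\Ga$ is of divergence type, the Hopf-Tsuji-Sullivan dichotomy (Theorem \ref{thm.divergence}) gives $\nu_\Ga(\La_{\Ga, M})=1$, where $\nu_\Ga$ is the unique $\Ga$-conformal measure of dimension $\delta_\Ga$ (Theorem \ref{thm.uniquepsdiv}). Because $(\id\times f)_*\nu_\Ga(B)=\nu_\Ga((\id\times f)^{-1}B)$ for Borel $B$, it suffices to establish the pointwise inclusion
$$(\id\times f)(\La_{\Ga, M})\subset \La_{\rho, wM},$$
for then $(\id\times f)^{-1}(\La_{\rho, wM})$ contains the full-measure set $\La_{\Ga, M}$, forcing $(\id\times f)_*\nu_\Ga(\La_{\rho, wM})=1$.

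\textbf{Reduction to a convergence statement.} I fix $\xi\in\La_{\Ga, M}$ and a loxodromic $\ga_0=(g_0,\rho(g_0))\in\Gr$. Loxodromicity in $G_1\times G_2$ forces both $g_0$ and $\rho(g_0)$ to be loxodromic, and Lemma \ref{ll} gives $y_{\ga_0}=(y_{g_0},f(y_{g_0}))=(\id\times f)(y_{g_0})$. Let $\eta=(\zeta,f(\zeta))\in\La_\rho$ satisfy $(y_{\ga_0},\eta)\in\La_\rho^{(2)}$; reading the first coordinate (general position in the rank one $\F_1$ means distinct) yields $y_{g_0}\ne\zeta$. Applying the Myrberg property of $\xi$ to the distinct pair $y_{g_0},\zeta\in\La$ produces $g_\ell\in\Ga$ with $g_\ell\xi\to y_{g_0}$ and $g_\ell o_1\to\zeta$. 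Setting $\ga_\ell=(g_\ell,\rho(g_\ell))\in\Gr$, I must check the two convergences making $\ga_\ell$ a witness for $(\xi,f(\xi))\in\La_\rho(\ga_0)$. The boundary convergence $\ga_\ell(\xi,f(\xi))=(g_\ell\xi,\rho(g_\ell)f(\xi))\to y_{\ga_0}$ is automatic: the first coordinate is the Myrberg convergence, and the second is $\rho(g_\ell)f(\xi)=f(g_\ell\xi)\to f(y_{g_0})$ by $\rho$-equivariance and continuity of $f$, needing no rank hypothesis.

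\textbf{The basepoint convergence (the crux).} It remains to show $\ga_\ell o=(g_\ell o_1,\rho(g_\ell)o_2)\to\eta$ in the sense of Definition \ref{conv}. Coordinatewise this reduces to $g_\ell o_1\to\zeta$ (immediate, $G_1$ rank one) and $\rho(g_\ell)o_2\to f(\zeta)$; regularity of $\ga_\ell$ holds since $g_\ell\to\infty$ and, as $\rho$ is faithful with discrete image, $\rho(g_\ell)\to\infty$ as well. If $f$ is a continuous extension of $\rho$, then $g_\ell o_1\to\zeta$ directly yields $\rho(g_\ell)o_2\to f(\zeta)$ by Definition \ref{contt}. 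The genuine obstacle is the remaining case $\op{rank}G_2=1$: here I pass to a subsequence so that both $g_\ell$ and $\rho(g_\ell)$ display North-South (convergence group) dynamics. For $g_\ell$ the attracting point is $\lim g_\ell o_1=\zeta$, and since $g_\ell\xi\to y_{g_0}\ne\zeta$ the repelling point must be $\xi$; thus $g_\ell\to\zeta$ locally uniformly on $\F_1\setminus\{\xi\}$. For $\rho(g_\ell)$ let $a'=\lim\rho(g_\ell)o_2$ be the attracting point and $b'$ the repelling point, so $\rho(g_\ell)\to a'$ locally uniformly on $\F_2\setminus\{b'\}$; the goal becomes $a'=f(\zeta)$. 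I choose an auxiliary $w\in\La$ with $w\ne\xi$ and $f(w)\ne b'$, which exists because $f(\La)=\La_{\rho(\Ga)}$ is infinite while $f$ is continuous on the perfect set $\La$. Then $g_\ell w\to\zeta$ (as $w\ne\xi$), hence $\rho(g_\ell)f(w)=f(g_\ell w)\to f(\zeta)$ by continuity, while $f(w)\ne b'$ gives $\rho(g_\ell)f(w)\to a'$ by the North-South dynamics. Therefore $a'=f(\zeta)$ and $\rho(g_\ell)o_2\to f(\zeta)$, completing the basepoint convergence.

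\textbf{Conclusion and main difficulty.} The two convergences show $\ga_\ell$ witnesses $(\xi,f(\xi))\in\La_\rho(\ga_0)$; since $\ga_0$ ranges over all loxodromic elements of $\Gr$, we get $(\xi,f(\xi))\in\La_{\rho, wM}$, establishing the inclusion and hence the proposition. I expect the main obstacle to be precisely the rank one $G_2$ case, namely identifying the attracting point $a'$ of $\rho(g_\ell)$ with $f(\zeta)$; this is exactly where the graph structure alone is insufficient and one must exploit the convergence-group dynamics of the discrete subgroup $\rho(\Ga)<G_2$ together with the auxiliary point $w$. The continuous-extension case, by contrast, follows immediately from Definition \ref{contt}.
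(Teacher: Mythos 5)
Your proposal is correct, and its outer skeleton coincides with the paper's: both reduce, via the Hopf--Tsuji--Sullivan dichotomy (Theorem \ref{thm.divergence}), to the pointwise inclusion $(\id \times f)(\La_{\Ga, M}) \subset \La_{\rho, wM}$, both dispose of the continuous-extension case immediately by Definition \ref{contt}, and both use Lemma \ref{ll} to write $y_{\ga_0} = (y_{g_0}, f(y_{g_0}))$ and equivariance plus continuity of $f$ for the boundary convergence. Where you genuinely diverge is the crux, the rank one $G_2$ case. The paper stays inside the Gromov-product calculus it sets up in that section: assuming $\rho(g_\ell) o_2 \not\to f(\eta_1)$, it shows along a subsequence that $\rho(g_\ell) o_2 \to y_{\rho(g_0)}$ while staying within bounded distance of the geodesic $[y_{\rho(g_0)}, f(\eta_1)]$, hence eventually in a bounded neighborhood of the translation axis of $\rho(g_0)$; cocompactness of $\langle \rho(g_0) \rangle$ on that neighborhood, discreteness of $\rho(\Ga) o_2$ and faithfulness of $\rho$ then force $g_\ell = g_0^{n_\ell} g$, contradicting $g_\ell o_1 \to \eta_1 \neq y_{g_0}$; its auxiliary point $\xi_{-1}$ is chosen so that $f(\xi_{-1}) \neq f(\xi_1)$, to keep a Gromov product finite. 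You instead invoke the convergence-group (North--South) property of sequences of isometries of a rank one symmetric space, applied to both $g_\ell$ and $\rho(g_\ell)$ after a subsequence: the repelling point of $g_\ell$ is pinned at $\xi$, and your auxiliary point $w$ (with $w \neq \xi$ and $f(w) \neq b'$) forces the attracting point of $\rho(g_\ell)$ to equal $f(\zeta)$. This is shorter, avoids the contradiction argument through the axis of $\rho(g_0)$ entirely, and uses discreteness and faithfulness only to guarantee $\rho(g_\ell) \to \infty$, rather than to rigidify $g_\ell$ as $g_0^{n_\ell} g$; the price is importing the convergence-group property as a black box, whereas the paper's argument is self-contained given the hyperbolicity estimates \eqref{gro}. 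Note also that you produce the witnessing convergence only along a subsequence, which is all the definition of $\La_{\rho}(\ga_0)$ requires (the paper proves convergence of the full sequence, which is more than needed).

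Two small caveats, neither a gap. Your parenthetical claim that regularity of $\ga_\ell$ follows from $g_\ell \to \infty$ and $\rho(g_\ell) \to \infty$ is valid only when both factors have rank one; in the higher-rank case it is Definition \ref{contt} itself, i.e., convergence in the sense of Definition \ref{conv}, that supplies regularity of $\rho(g_\ell)$, so your proof does not actually depend on that claim. And the existence of $w$ needs only that $f(\La) = \La_{\rho(\Ga)}$ contains at least three points (true since $\rho(\Ga)$ is Zariski dense, hence non-elementary), so that one may choose $f(w) \notin \{b', f(\xi)\}$; the perfectness of $\La$ is not the relevant point.
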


We begin by recalling some basic hyperbolic features of $X_1$.
The Gromov product of $x, y\in X_1$ at $p \in X_1$ 
is defined as
$$\langle x, y\rangle_p=\frac{1}{2} \left( d_1(x,p)+d_1(p, y)- d_1(x,y) \right).$$
For $\xi\ne \eta\in  \F_1$, the Gromov product $\langle \xi, \eta\rangle_p$
is defined as
$$\langle \xi, \eta\rangle_p=\lim_{\ell \to \infty} \langle x_\ell, y_\ell\rangle_p$$  for any sequences $x_\ell, y_\ell\in X_1$ converging 
to $\xi, \eta$ respectively. We also set $\langle \xi, \xi \rangle_p = \infty$. 
The Gromov product $\langle \xi, \eta\rangle_p$ is roughly a
distance from $p$ to the geodesic connecting $\xi$ and $\eta$; more precisely, there exists a constant $c > 0$ depending only
on the  (Gromov) hyperbolicity constant of $X_1$ such that for any $p\in X_1$ and $\xi, \eta, \zeta \in X_1\cup {\cal F}_1$,  \be\label{gro} |d(p, [\xi, \eta]) - \langle \xi, \eta \rangle_p | < c \quad\text{and} \quad
\langle \xi, \eta \rangle_p \ge \min \{ \langle \xi, \zeta \rangle_p, \langle \zeta, \eta \rangle_p \} - c
\ee
 where $[\xi, \eta]$ denotes the unique geodesic in $X_1$ connecting $\xi$ and $\eta$.
For any $q\in [\xi, \eta]$, we have
$$\langle \xi, \eta\rangle_p =\frac{1}{2} \left(\beta_\xi (p, q)+\beta_\eta(p, q)\right).$$ 

The visual metric  on  $\F_1$ at $p$ is defined by
$$d_{p}(\xi, \eta)=e^{-\langle \xi, \eta\rangle_{p} }\;\; \text{if $\xi\ne \eta$ and }
d_{p}(\xi, \xi)=0 .$$ 
If we normalize the metric so that $X_1$
has the sectional curvature at most $-1$, then $d_p$
is indeed a {\it metric}; this was proved by Bourdon \cite[Section 1.1]{Bourdon1995structure}.

\subsection*{Proof of Proposition \ref{wM}}
If $\Ga$ is of divergence type, then $\nu_{\Ga}(\La_{\Ga, M}) = 1$ by Theorem \ref{thm.divergence}. Therefore Proposition \ref{wM} 
 is an immediate consequence of the following:
\begin{lemma} \label{lem.loxpush}
We have $$ (\id \times f)\left( \La_{\Ga, M}\right) \subset \La_{\rho, wM}.$$
\end{lemma}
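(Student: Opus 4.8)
The goal is to show that the pushforward $(\id\times f)$ of a Myrberg limit point for $\Ga$ is a weak-Myrberg limit point for $\Ga_\rho$. The plan is to fix a Myrberg point $\xi\in\La_{\Ga,M}$ and a loxodromic $\ga_0\in\Ga_\rho$, write $\ga_0=(g_0,\rho(g_0))$ with both $g_0$ and $\rho(g_0)$ loxodromic, and use the defining density property of $\xi$ in $G_1$ to produce a sequence $\ga_\ell\in\Ga$ realizing the required convergence in the product $\F=\F_1\times\F_2$. Since a Myrberg point sees \emph{every} pair $(\eta,\eta')\in\La\times\La$ (with $\eta\ne\eta'$) as a limit of $(\ga_\ell\xi,\ga_\ell o_1)$, the natural first move is: given a target $\eta=(\eta^{(1)},\eta^{(2)})\in\La_\rho$ with $(y_{\ga_0},\eta)\in\La_\rho^{(2)}$, apply the Myrberg property in $\F_1$ to the pair $(y_{g_0},\eta^{(1)})$ to extract $\ga_\ell\in\Ga$ with $\ga_\ell\xi\to y_{g_0}$ and $\ga_\ell o_1\to\eta^{(1)}$ in $\F_1$.

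The first-factor convergence is then immediate by definition; the work is to upgrade it to convergence in the second factor, i.e. to show $\rho(\ga_\ell)(f(\xi))\to y_{\rho(g_0)}$ and $\rho(\ga_\ell)o_2\to\eta^{(2)}$ in $\F_2$. Here I would split into the two hypotheses of the proposition. When $f$ is a continuous extension of $\rho$, the second statement $\rho(\ga_\ell)o_2\to f(\eta^{(1)})=\eta^{(2)}$ is exactly Definition \ref{contt} applied to the sequence $g_\ell=\ga_\ell$, since $\ga_\ell o_1\to\eta^{(1)}$; and for the attracting-point statement, I would argue that $(f(\xi),y_{\rho(g_0^{-1})})$ is in general position (using $\xi\ne y_{g_0^{-1}}$, which holds since $\xi$ is conical hence not a parabolic/fixed point, together with the general-position transfer under $f$) so that $\rho(g_0)^\ell f(\xi)\to y_{\rho(g_0)}$, combined with Lemma \ref{ll}'s identity $f(y_{g_0})=y_{\rho(g_0)}$. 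When $\op{rank}G_2=1$, I would instead use the rank-one convergence-group dynamics directly: $\ga_\ell\to\infty$ with $\ga_\ell\xi\to y_{g_0}$ forces, via north-south dynamics and the contraction properties of the $\rho(\ga_\ell)$ acting on the boundary sphere $\F_2$, the convergence $\rho(\ga_\ell)f(\xi)\to f(y_{g_0})=y_{\rho(g_0)}$ and $\rho(\ga_\ell)o_2\to\eta^{(2)}$, exploiting that $f$ intertwines the two actions and is continuous.

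The step I expect to be the main obstacle is the rank-one case without the continuous-extension hypothesis: there, controlling the second-factor convergence $\rho(\ga_\ell)o_2\to\eta^{(2)}$ is delicate because a priori one only knows that $f$ is continuous and $\rho$-equivariant on $\La$, not that $\rho(\ga_\ell)o_2$ converges to anything at all. The key technical device will be the Gromov-product and visual-metric estimates recalled in \eqref{gro}: since $\op{rank}G_2=1$, boundary convergence in $\F_2$ is governed by these hyperbolic estimates, and I would transfer the convergence data from $\F_1$ by relating the Busemann/Gromov-product behavior of $\ga_\ell$ on $X_1$ to that of $\rho(\ga_\ell)$ on $X_2$ through the equivariance of $f$ and the finitely many loxodromic $\ga_0$'s one tests against. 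Concretely, I would show that if $\rho(\ga_\ell)o_2$ had a subsequential limit $\eta'\ne\eta^{(2)}$, then applying the same argument with $\eta'$ in place of $\eta^{(2)}$ and using injectivity/continuity of $f$ on the relevant points would contradict either the general-position hypothesis $(y_{\ga_0},\eta)\in\La_\rho^{(2)}$ or the minimality of $\La_{\rho(\Ga)}$. Once both factors are handled, $\ga_\ell$ witnesses $\xi\in\La_\rho(\ga_0)$ for the fixed $\ga_0$; since $\ga_0$ was an arbitrary loxodromic element of $\Ga_\rho$, we conclude $(\id\times f)(\xi)\in\La_{\rho,wM}$, which is the desired containment.
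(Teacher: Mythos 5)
Your overall skeleton matches the paper's proof: fix $\xi_1\in\La_{\Ga,M}$ and a loxodromic $\ga_0=(g_0,\rho(g_0))\in\Ga_\rho$, run the Myrberg property in the first factor to get $g_\ell\in\Ga$ with $g_\ell\xi_1\to y_{g_0}$ and $g_\ell o_1\to\eta_1$, then upgrade to the second factor, splitting into the continuous-extension case and the rank-one case. In the continuous-extension case your treatment is essentially right, except that the convergence $\rho(g_\ell)f(\xi_1)\to y_{\rho(g_0)}$ needs no dynamics of the iterates $\rho(g_0)^\ell$ at all: it is just equivariance and continuity, $\rho(g_\ell)f(\xi_1)=f(g_\ell\xi_1)\to f(y_{g_0})=y_{\rho(g_0)}$, the last identity being Lemma \ref{ll} (valid since both components of the loxodromic $\ga_0$ are loxodromic). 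Also, your side remark that $\xi_1\ne y_{g_0^{-1}}$ because ``conical points are not fixed points'' is false as stated---attracting fixed points of loxodromic elements \emph{are} conical; what is true is that Myrberg points of non-elementary groups are not fixed points, for a different reason.

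The genuine gap is exactly where you predicted the difficulty: the rank-one case, i.e.\ proving $\rho(g_\ell)o_2\to f(\eta_1)$ when $f$ is only assumed continuous and equivariant on $\La$. Your proposed mechanism---derive a contradiction with general position of $(y_{\ga_0},\eta)$ or with minimality of $\La_{\rho(\Ga)}$---cannot work, because a subsequential limit $\rho(g_\ell)o_2\to y_{\rho(g_0)}$ is perfectly consistent with all boundary-level data; no purely topological/dynamical property of $f$ rules it out (this is precisely the Cannon--Thurston-type phenomenon the lemma must overcome). The paper's proof needs two ingredients you do not supply. First, an auxiliary point $\xi_{-1}\in\La-f^{-1}(f(\xi_1))$ (which exists since $f(\La)=\La_{\rho(\Ga)}$ is not a singleton, by Zariski density of $\rho(\Ga)$): the $\rho$-invariance of the Gromov product gives $\langle f(g_\ell\xi_1),f(g_\ell\xi_{-1})\rangle_{\rho(g_\ell)o_2}=\langle f(\xi_1),f(\xi_{-1})\rangle_{o_2}<\infty$, and since $f(g_\ell\xi_1)\to y_{\rho(g_0)}$ and $f(g_\ell\xi_{-1})\to f(\eta_1)$, this pins every subsequential limit of $\rho(g_\ell)o_2$ to the two-point set $\{y_{\rho(g_0)},f(\eta_1)\}$. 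Second, and crucially, ruling out the bad limit $y_{\rho(g_0)}$ is a \emph{group-theoretic} argument: if $\rho(g_\ell)o_2\to y_{\rho(g_0)}$, the Gromov product estimates force $\rho(g_\ell)o_2$ into a bounded neighborhood of the translation axis of $\rho(g_0)$; cocompactness of the cyclic group $\langle\rho(g_0)\rangle$ on that neighborhood together with discreteness of the orbit yields $n_\ell\to\infty$ and a fixed $g\in\Ga$ with $\rho(g_\ell)=\rho(g_0)^{n_\ell}\rho(g)$ along a subsequence; faithfulness of $\rho$ then transfers this back to $G_1$ as $g_\ell=g_0^{n_\ell}g$, whence $g_\ell o_1\to y_{g_0}$, contradicting $g_\ell o_1\to\eta_1\ne y_{g_0}$. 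This back-transfer through faithfulness is the heart of the lemma and is entirely missing from your proposal.
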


\begin{proof}  Let $\xi_1\in \La_{\Ga, M}$ be an arbitrary element and
let $\xi=(\xi_1, f(\xi_1)) $.
Letting 
$\ga_0=(g_0, \rho(g_0))\in \Ga_\rho$ be an arbitrary loxodromic element,
we need to show that
\be\label{rrr} \xi\in \La_{\rho}(\ga_0) .\ee 
Noting that $y_{\ga_0} = (y_{g_0}, f(y_{g_0})) \in \La_{\rho}$,
let $\eta = (\eta_1, f(\eta_1)) \in \La_{\rho}$ be any element which is in a general position with $y_{\ga_0}$. As $\xi_1 \in \La_{\Ga, M}$, there exists a sequence $g_{\ell} \in \Ga$ such that 
\be \label{ex}
 g_{\ell} \xi_1 \to y_{g_0} \quad \mbox{and} \quad g_{\ell} o_1 \to \eta_1 \quad \mbox{as } \ell \to \infty.
\ee
Take any $\xi_{-1} \in \La - f^{-1}(f(\xi_1))$; this is possible since $\rho(\Ga)$ is Zariski dense and hence $f(\La)$ is not singleton. 
Since for all $\ell \ge 1$,
$$\langle g_\ell \xi_1, g_\ell \xi_{-1} \rangle_{g_\ell o_1} = \langle \xi_1, \xi_{-1} \rangle_{o_1} < \infty,$$ 
\eqref{ex}  and \eqref{gro} imply that 
\be\label{gm} \lim_{\ell \to \infty} g_{\ell} \xi_{-1} = \lim_{\ell \to \infty} g_{\ell} o_1 = \eta_1 .\ee

Since $\lim_{\ell \to \infty} g_\ell o_1 = \eta_1 \neq y_{g_0}$ and $\lim_{\ell \to \infty} g_\ell \xi_1 = y_{g_0}$, we have
$$
\langle \xi_1, g_\ell^{-1}y_{g_0}\rangle_{o_1} = \langle g_\ell \xi_1, y_{g_0}\rangle_{g_\ell o_1} \to \infty \quad \mbox{as } \ell \to \infty.
$$
It follows that
\be \label{eqn.gromovobs}
\lim_{\ell \to \infty} g_\ell^{-1} y_{g_0} = \xi_1.
\ee

As $f$ is $\rho$-equivariant and continuous,
we get from \eqref{ex}, \eqref{gm} and Lemma \ref{ll} that
as $\ell \to \infty$,  $$
\rho(g_\ell)f(\xi_{1}) = f(g_\ell \xi_{1}) \to f(y_{g_0}) = y_{\rho(g_0)} \text{ and } $$
$$
\rho(g_{\ell})f(\xi_{-1}) = f(g_{\ell} \xi_{-1}) \to f(\eta_1).
$$

Hence \eqref{rrr} follows once we verify that 
\be\label{ver} \lim_{\ell \to \infty} \rho(g_\ell)o_2 = f(\eta_1) .\ee 
If $f$ is a 
continuous extension of $\rho$, this is automatic from \eqref{ex}. Hence we now assume that $\op{rank}G_2=1$, and so $X_2$ is a rank one symmetric space, in the rest of the proof. We use the same notation $\langle \cdot, \cdot \rangle_{\cdot}$ for the Gromov product in $X_2 \cup \F_2$.
By the hyperbolicity of $X_2$, we have a constant $c > 0$ depending only on $X_2$  as in \eqref{gro} so that for all $\ell\ge 1$, $$
\begin{aligned}
\langle f(\xi_1), f(\xi_{-1})\rangle_{o_2} & = \langle f(g_\ell \xi_1), f(g_\ell \xi_{-1})\rangle_{\rho(g_\ell)o_2} \\
& \ge \min \{ \langle f(g_\ell \xi_1), y_{\rho(g_0)}\rangle_{\rho(g_\ell)o_2}, \langle y_{\rho(g_0)}, f(g_\ell \xi_{-1})\rangle_{\rho(g_\ell) o_2} \} - c\\
& = \min \{ \langle f(\xi_1), \rho(g_\ell)^{-1}y_{\rho(g_0)}\rangle_{o_2}, \langle y_{\rho(g_0)}, f(g_\ell \xi_{-1})\rangle_{\rho(g_\ell) o_2} \} - c.
\end{aligned}$$
On the other hand, by \eqref{eqn.gromovobs} and the continuity of $f$, we have,
as $\ell \to \infty$,
$$\rho(g_{\ell})^{-1}y_{\rho(g_0)} = f(g_{\ell}^{-1} y_{g_0}) \to f(\xi_1)$$  which implies $\langle f(\xi_1), \rho(g_{\ell})^{-1}y_{\rho(g_0)}\rangle_{o_2} \to \infty$ as $\ell \to \infty$. Hence for all large enough $\ell \ge 1$, we have 
$$\langle f(\xi_1), f(\xi_{-1}) \rangle_{o_2} \ge \langle y_{\rho(g_0)}, f(g_{\ell}\xi_{-1}) \rangle _{\rho(g_{\ell})o_2} - c.$$
Again, it follows from the hyperbolicity of $X_2$ that 
\be \label{eqn.gromov2}
\langle f(\xi_1), f(\xi_{-1}) \rangle_{o_2} \ge \min\{ \langle y_{\rho(g_0)},  f(\eta_1) \rangle_{\rho(g_\ell) o_2}, \langle f(\eta_1), f(g_{\ell} \xi_{-1}) \rangle_{\rho(g_{\ell}) o_2}\} - 2c.
\ee

Now suppose to the contrary that \eqref{ver} does not hold.
From the choice of $\xi_{-1}$, we have $f(\xi_1) \neq f(\xi_{-1})$ and hence $$ \langle f(g_\ell \xi_1), f(g_\ell \xi_{-1})\rangle_{\rho(g_\ell)o_2} = \langle f(\xi_1), f(\xi_{-1})\rangle_{o_2}<\infty . $$ 
Since $f(g_\ell \xi_{1}) \to y_{\rho(g_0)}$ and $f(g_{\ell} \xi_{-1}) \to f(\eta_1)$ as $\ell \to \infty$,  by passing to a subsequence,
we may assume that  $\rho(g_\ell)o_2 $ converges to either 
$y_{\rho(g_0)}$ or $f(\eta_1)$. Since we are assuming that \eqref{ver} does not hold, we must have
$$\lim_{\ell \to \infty} \rho(g_\ell)o_2 = y_{\rho(g_0)}.$$

Since $\lim_{\ell \to \infty} f(g_{\ell} \xi_{-1}) = f(\eta_1) \neq y_{\rho(g_0)}$, it implies that $$\langle f(\eta_1), f(g_{\ell}\xi_{-1})\rangle_{\rho(g_{\ell}) o_2} \to \infty \quad \mbox{as } \ell \to \infty.$$
Then, for all large enough $\ell \ge 1$, we have from \eqref{eqn.gromov2} that $$  \langle f(\xi_1), f(\xi_{-1})\rangle_{o_2} \ge \langle y_{\rho(g_0)}, f(\eta_1) \rangle_{\rho(g_{\ell})o_2} -2c.$$

 It follows from \eqref{gro} that the distance between
$\rho(g_{\ell}) o_2$ and the geodesic $[y_{\rho(g_0)}, f(\eta_1)]$ connecting $y_{\rho(g_0)}$ and $f(\eta_1)$ is uniformly bounded. Hence for some $B > 0$, $\rho(g_{\ell})o_2$ converges to $y_{\rho(g_0)}$ within the $B$-neighborhood of the translation axis $\cal L_{\rho(g_0)}$ of $\rho(g_0)$. Since $\rho(g_0)$ acts cocompactly on any fixed neighborhood of $\cal L_{\rho(g_0)}$,
there exists a sequence  $n_\ell \to \infty$ and a compact subset $\C \subset X_2$ such that $$\rho(g_0)^{-n_\ell} \rho(g_\ell) o_2 \in \C\cap \rho(\Ga)o_2.$$ Since $\C  \cap \rho(\Ga)o_2$ is a finite subset,  by passing to a subsequence,
we may assume that
$\rho(g_0)^{-n_\ell}\rho(g_\ell)$ is a constant sequence in $\rho(\Ga)$, say, $\rho(g)$, for some $g \in \Ga$. 
As $\rho$ is faithful, it follows that
$$g_{\ell} = g_0^{n_\ell} g$$ for all $\ell \ge 1$, and hence $g_\ell o_1 $ converges to
$ y_{g_0}$ as $\ell \to \infty$; this contradicts
the second condition in \eqref{ex} as $\eta_1\ne y_{g_0}$.
Therefore we have proved \eqref{ver}, completing the proof.
\end{proof}

\section{Essential subgroups for graph-conformal measures} \label{sec.ess}
Let $G_1$ be a connected simple real algebraic group of rank one and $G_2$ be a connected semisimple real algebraic group.  We use the notations set up in section \ref{set} such as $G=G_1\times G_2$ and $\F=\F_1\times \F_2$.

Let $\Ga<G_1$ be a Zariski dense discrete subgroup of divergence type
 and $\rho : \Ga \to G_2$ be a discrete faithful Zariski dense representation. 
 Let $f : \La \to \F_2$ be a $\rho$-equivariant continuous map where $\La\subset \F_1$ is the limit set of $\Ga$.
Let $\nu_\Ga$ be the unique $\Ga$-conformal measure of dimension $\delta_{\Ga}$ on $\La$ and
set $$\nug=(\id \times f)_*\nu_\Ga.$$
In this case, $\fa_1=\br$ and $\nug$ is a $(\Ga_\rho, \sigma_1)$-conformal measure where 
\be \label{eqn.defsigma}
\sigma_1(u_1, u_2)=\delta_\Ga u_1 .
\ee

The main goal of this section is to prove: 
\begin{theorem} \label{thm.ess}
Let $\Ga < G_1$ be of divergence type. Suppose either that $\op{rank}G_2=1$ or that
$f$ is a continuous extension of $\rho$.
If $\Ga_\rho$ is Zariski dense, then 
$$\ess_{\nug} (\Ga_{\rho}) = \fa.$$

\end{theorem}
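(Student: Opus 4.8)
The plan is to show that $\ess_{\nug}(\Gr)$ contains $\lambda(\ga_0)$ for every loxodromic $\ga_0\in\Gr$ and then conclude by density. Indeed, $\ess_{\nug}(\Gr)$ is a closed subgroup of $\fa$, and since $\Gr$ is Zariski dense in $G=G_1\times G_2$, Theorem \ref{t1} shows that $\{\lambda(\ga):\ga\in\Gr\text{ loxodromic}\}$ generates a dense subgroup of $\fa=\fa_1\oplus\fa_2$; hence it suffices to prove
\[
\lambda(\ga_0)\in\ess_{\nug}(\Gr)\qquad\text{for every loxodromic }\ga_0=(g_0,\rho(g_0))\in\Gr .
\]
Fix such $\ga_0$, write $\lambda_0=\lambda(\ga_0)=(\lambda(g_0),\lambda(\rho(g_0)))$ and $p^{\pm}=y_{\ga_0^{\pm1}}$.

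\textbf{Key identity and relocation of fixed points.} The mechanism rests on the basepoint-independence of the Busemann displacement at a fixed point: for any loxodromic $g$ and any $z\in X$ one has $\beta_{y_g}(z,gz)=\beta_{y_g}(o,go)$, because $\beta_{y_g}(gz,go)=\beta_{g^{-1}y_g}(z,o)=-\beta_{y_g}(o,z)$ cancels the term $\beta_{y_g}(o,z)$ in the cocycle expansion of $\beta_{y_g}(o,go)$. Evaluating on the translation axis gives
\[
\beta_{y_g}(o,go)=\lambda(g)\qquad\text{for every loxodromic }g\in G .
\]
Thus $\lambda_0$ is attained \emph{exactly} as $\beta_{y_g}(o,go)$ at the attracting fixed point of any conjugate $g=\ga^{-1}\ga_0\ga$. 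To place such a fixed point into a prescribed positive-measure set, I would use Proposition \ref{wM}, which gives $\nug(\La_{\rho,wM})=1$. Given $B$ with $\nug(B)>0$, replace $B$ by $B\cap\La_{\rho,wM}$ and fix a density point $\xi$ of $B$. Applying the weak-Myrberg property of $\xi$ to $\ga_0$ with a target $\eta\in\La_\rho$ in general position with $p^+$ and with $\eta\neq p^-$, we obtain $\ga_\ell\in\Gr$ with $\ga_\ell\xi\to p^+$ and $\ga_\ell o\to\eta$. As in \eqref{eqn.gromovobs}, this forces $\ga_\ell^{-1}p^+\to\xi$, and since $\eta\neq p^\pm$ the same Gromov-product argument gives $\ga_\ell^{-1}p^-\to\xi$ as well. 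Hence the conjugates $g_\ell:=\ga_\ell^{-1}\ga_0\ga_\ell$ are loxodromic with $\lambda(g_\ell)=\lambda_0$ and with both fixed points $y_{g_\ell}=\ga_\ell^{-1}p^+$ and $y_{g_\ell^{-1}}=\ga_\ell^{-1}p^-$ converging to the density point $\xi$.

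\textbf{Positivity and the main obstacle.} It remains to find $\ell$ with
\[
\nug\big(B\cap g_\ell B\cap\{\xi'\in\F:\|\beta_{\xi'}(o,g_\ell o)-\lambda_0\|<\e\}\big)>0 .
\]
Since $\beta_{y_{g_\ell}}(o,g_\ell o)=\lambda_0$ exactly and the Busemann map is continuous, the rightmost set contains a neighborhood $U_\ell$ of $y_{g_\ell}$; because $d(o,g_\ell o)\to\infty$, this $U_\ell$ shrinks to $\{\xi\}$. As both $y_{g_\ell}$ and $y_{g_\ell^{-1}}$ converge to $\xi$, the images $g_\ell^{-1}(U_\ell)$ also concentrate near $\xi$, so the whole configuration localizes at the density point $\xi$ of $B$. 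The crux is then a conformal (shadow-lemma) estimate for $\nug$ showing that a positive $\nug$-proportion of $\xi'\in B\cap U_\ell$ also satisfies $g_\ell^{-1}\xi'\in B$. This is the main obstacle, and it is delicate precisely because $\nug=(\id\times f)_*\nu_\Ga$ is a $(\Gr,\sigma_1)$-conformal measure whose weight $\sigma_1(u_1,u_2)=\delta_\Ga u_1$ only sees the $\fa_1$-direction and which lives on the graph $\La_\rho=(\id\times f)(\La)$: one must run the shadow estimate for the divergence-type Patterson--Sullivan measure $\nu_\Ga$ on the rank-one factor $\F_1$ (Theorem \ref{thm.uniquepsdiv}) and transport it through $f$, while simultaneously checking that the $\fa_2$-component $\beta_{\xi'}(o_2,\rho(g_\ell)o_2)$ genuinely reaches $\lambda(\rho(g_0))$ on a set of positive measure. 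This alignment of the two Busemann components is exactly where the standing hypothesis---$\op{rank}G_2=1$ or $f$ a continuous extension of $\rho$---enters, via Proposition \ref{wM}, guaranteeing that the $G_2$-recurrence shadows the $G_1$-recurrence.

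\textbf{Conclusion.} Once $\lambda(\ga_0)\in\ess_{\nug}(\Gr)$ is verified for every loxodromic $\ga_0\in\Gr$, the closedness of $\ess_{\nug}(\Gr)$ together with the density in Theorem \ref{t1} yields $\ess_{\nug}(\Gr)=\fa$.
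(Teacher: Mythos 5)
Your proposal sets up the right skeleton (reduce to Jordan projections of conjugates whose attracting fixed points approach a typical point of $B$, using the weak-Myrberg set from Proposition \ref{wM}), and your identity $\beta_{y_g}(o,go)=\lambda(g)$ for loxodromic $g$ is correct. But the proof is not complete: the step you yourself label ``the main obstacle'' --- producing $\ell$ with $\nug\bigl(B\cap g_\ell B\cap\{\xi':\|\beta_{\xi'}(o,g_\ell o)-\lambda_0\|<\e\}\bigr)>0$ --- is precisely the content of the paper's entire Section \ref{sec.ess}, and you do not supply it. Two things are missing. First, your ``density point'' of $B$ is undefined: a Lebesgue differentiation theorem does not come for free for the measure $\nug$ on the graph $\La_\rho\subset\F_1\times\F_2$; the paper must build it by hand, introducing the pseudo-metric $d_p(\xi,\eta)=e^{-\delta_\Ga\langle\xi_1,\eta_1\rangle_{p_1}}$ that only sees the rank-one factor, proving a covering lemma (Lemma \ref{lem.covering}) for it, and then proving the density statement (Proposition \ref{prop.submerge}) \emph{only} with respect to the special basis of balls $\B_R(\ga_0,\e)$ centered at the points $\ga\xi_0$, $\ga\in\Gr$ --- this is where $\nug(\La_{\rho,wM})=1$ is actually consumed. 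Second, the positivity itself is obtained not by a shadow estimate near $y_{g_\ell}$ alone but by a pigeonhole-plus-contraction argument: one picks $D=B_p(\ga\xi_0,r)\in\B_R(\ga_0,\e)$ with $\nu_p(B\cap D)>(1+e^{-\sigma_1(\la(\ga_0))-\delta_\Ga\e})^{-1}\nu_p(D)$, uses conformality of $\nu_p$ together with the defining property of $r_p(\ga)$ to bound $\nu_p(\ga\ga_0\ga^{-1}(B\cap D))$ from below, and uses the inclusion $\ga\ga_0\ga^{-1}D\subset D$ to force $(B\cap D)\cap\ga\ga_0\ga^{-1}(B\cap D)$ to have positive measure. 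None of this is routine, and your sketch does not indicate how to carry it out.

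There is also a secondary structural problem with your concluding density appeal. The contraction $\ga\ga_0\ga^{-1}D\subset D$ (and the covering estimates) force the paper to restrict to loxodromic $\ga_0$ with $\sigma_1(\la(\ga_0))>1+\log 3N_0$; any quantitative argument filling your gap will impose a similar largeness condition on the first-factor translation length. But then Theorem \ref{t1} alone no longer suffices to conclude, because the set $\{\la(\ga):\ga\in\Gr,\ \sigma_1(\la(\ga))\le Q\}$ need \emph{not} be finite (the second factor is unconstrained), so one cannot simply delete a finite set. The paper needs the separate Lemma \ref{finite}, proved via a Zariski dense Schottky subgroup $\Ga'<\Ga$ for which closed geodesics of bounded length in $\Ga'\backslash X_1$ are finite in number, to see that the surviving Jordan projections still generate a dense subgroup of $\fa$. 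Your proof would need this lemma (or an equivalent) as well.
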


\begin{Rmk} 
\begin{enumerate}
\item In fact, our proof shows a slightly stronger statement that
for any non-trivial
normal subgroup $\Ga ' < \Ga$,
$$\ess_{\nug} (\Ga'_{\rho}) = \fa.$$

\item 
  If $\Gr$ is an Anosov subgroup with respect to $P$, Theorem \ref{thm.ess} is a special case of  \cite[Proposition 10.2]{lee2020invariant}. The proof there uses  the Anosov property in a crucial way.  
Our proof of Theorem \ref{thm.ess} is an adaptation of this proof to graph-conformal measures of a self-joining subgroup, which is not necessarily Anosov. 
  \end{enumerate}
\end{Rmk}

\subsection*{Covering lemma}

Let $p = (p_1, p_2) \in X$, $\xi = (\xi_1, \xi_2), \eta = (\eta_1, \eta_2) \in \La_{\rho}$.
As  $\La_\rho$ is the graph of $f$, we have $ \xi \ne  \eta  \in \La_\rho$ implies that
$\xi_1\ne \eta_1$. 
We define a metric-like function $d_p$
on $\La_\rho$ as follows: for any $ \xi\ne  \eta \in \La_\rho$, set
\be \label{eqn.defdp}
d_p(\xi, \eta) := e^{-\delta_\Ga \cdot  \langle \xi_1, \eta_1 \rangle_{p_1} } 
:= d_{p_1}(\xi_1, \eta_1)^{\delta_\Ga},
\ee
and $d_p(\xi, \eta)=0$ if $\xi=\eta$. 
 Let $n_0 > 0$ be a constant so that the normalized metric space $(X_1, n_0 d_1)$ has  sectional curvature at most $-1$. Then, as remarked before, $d_{p_1}^{n_0} = d_p^{n_0/\delta_{\Ga}}$ is a genuine metric by \cite[Section 1.1]{Bourdon1995structure}.
We fix $N_0 \ge 1$  such that  for
all $a, b\ge 0$,
$$\left(a^{n_0 \over \delta_{\Ga}} + b^{n_0 \over \delta_{\Ga}}\right)^{\delta_{\Ga} \over n_0} \le N_0 (a+b) .
$$
Then
we have the following pseudo-triangle inequality:
for all $\xi, \eta, \zeta\in \La_\rho$,
\begin{align}\label{no} d_p(\xi, \eta) & = \left(d_{p}(\xi, \eta)^{n_0 \over \delta_{\Ga}} \right)^{\delta_\Ga \over n_0} \nonumber \\
&\le  \left(d_{p}(\xi, \zeta)^{n_0 \over \delta_{\Ga}} + d_{p}(\zeta, \eta)^{n_0 \over \delta_{\Ga}} \right)^{\delta_\Ga \over n_0} \nonumber \\
& \le N_0 (d_{p}(\xi, \zeta) + d_{p}(\zeta, \eta)).
\end{align}

 For $\xi \in \La_{\rho}$ and $r > 0$, set
 $$B_p(\xi, r) := \{ \eta \in \La_{\rho} : d_p(\xi, \eta) < r \}.$$
It is standard to deduce the following from the triangle inequality \eqref{no} (cf. \cite[Lemma 6.12]{lee2020invariant}):
\begin{lemma} [Covering lemma] \label{lem.covering}
For any finite collection of open $d_p$-balls $B_p(\xi_1, r_1), \cdots, B_p(\xi_n, r_n)$ for $\xi_j \in \La_{\rho}$ and $r_j > 0$, there is a subcollection of disjoint balls $B_p(\xi_{i_1}, r_{i_1}), \cdots, B_p(\xi_{i_{\ell}}, r_{i_{\ell}})$ such that $$\bigcup_{j = 1}^n B_p(\xi_j, r_j) \subset \bigcup_{j = 1}^{\ell} B_p(\xi_{i_j} ,3N_0 r_{i_j}).$$ 
where $N_0\ge 1$ is as in \eqref{no}.
\end{lemma}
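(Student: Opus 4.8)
The plan is to run the finite Vitali covering argument, the only new feature being to carry the quasi-metric constant $N_0$ through the estimates, since $d_p$ obeys the pseudo-triangle inequality \eqref{no} rather than the genuine triangle inequality. First I would reindex the given balls so that the radii are nonincreasing, say $r_1 \ge r_2 \ge \cdots \ge r_n$, and then select a maximal disjoint subcollection greedily: set $i_1 = 1$, and having chosen $i_1 < \cdots < i_m$, let $i_{m+1}$ be the least index $j$ for which $B_p(\xi_j, r_j)$ is disjoint from all of $B_p(\xi_{i_1}, r_{i_1}), \dots, B_p(\xi_{i_m}, r_{i_m})$, terminating when no such $j$ remains. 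This produces pairwise disjoint balls $B_p(\xi_{i_1}, r_{i_1}), \dots, B_p(\xi_{i_\ell}, r_{i_\ell})$. By the stopping rule, every non-selected ball $B_p(\xi_j, r_j)$ meets some selected ball $B_p(\xi_{i_k}, r_{i_k})$ that was examined earlier, and since the radii are sorted in decreasing order this forces $r_{i_k} \ge r_j$.

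The core step is then to verify the containment $B_p(\xi_j, r_j) \subset B_p(\xi_{i_k}, 3N_0 r_{i_k})$ for each such skipped ball (selected balls lying in their own dilates, as $3N_0 \ge 1$). I would fix a point $\zeta$ in the nonempty intersection $B_p(\xi_j, r_j) \cap B_p(\xi_{i_k}, r_{i_k})$ and an arbitrary $\eta \in B_p(\xi_j, r_j)$, and bound $d_p(\xi_{i_k}, \eta)$ by applying \eqref{no} a bounded number of times, using $d_p(\zeta, \xi_{i_k}) < r_{i_k}$, $d_p(\zeta, \xi_j) < r_j \le r_{i_k}$ and $d_p(\xi_j, \eta) < r_j \le r_{i_k}$; this controls $d_p(\xi_{i_k}, \eta)$ by the fixed multiple $3N_0$ of $r_{i_k}$ recorded in \cite[Lemma 6.12]{lee2020invariant}. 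Equivalently, since \cite{Bourdon1995structure} provides the genuine metric $d_p^{n_0/\delta_\Ga} = d_{p_1}^{n_0}$, one may pass to this honest metric, run the classical triple triangle-inequality estimate there, and raise back to the power $\delta_\Ga/n_0$. Taking the union over all $j$ then yields the asserted inclusion $\bigcup_{j=1}^n B_p(\xi_j, r_j) \subset \bigcup_{j=1}^{\ell} B_p(\xi_{i_j}, 3N_0 r_{i_j})$.

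I do not anticipate a genuine obstacle, since this is the standard finite Vitali covering lemma and the cited statement \cite[Lemma 6.12]{lee2020invariant} treats precisely this quasi-metric setting. The one point demanding care is purely bookkeeping: because $d_p$ is only a quasi-metric, the dilation factor is governed by $N_0$ through \eqref{no} rather than by an absolute constant, so I would chain \eqref{no} only the minimal number of times needed from $\xi_{i_k}$ to $\eta$ and confirm that the accumulated constant is absorbed into $3N_0$.
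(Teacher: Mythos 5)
Your Vitali skeleton---sorting by radius, greedy selection of a maximal disjoint subfamily, and the observation that every skipped ball meets an earlier-selected ball of at least its own radius---is correct and is exactly the ``standard'' argument the paper has in mind (the paper offers no details beyond a pointer to \cite[Lemma 6.12]{lee2020invariant}). The genuine gap is precisely the step you set aside as bookkeeping: the accumulated constant is \emph{not} absorbed into $3N_0$, by either of your routes. Chaining \eqref{no} twice, through $\zeta$ and then $\xi_j$, gives
$$d_p(\xi_{i_k},\eta)\;\le\; N_0\, d_p(\xi_{i_k},\zeta)+N_0^2\, d_p(\zeta,\xi_j)+N_0^2\, d_p(\xi_j,\eta)\;<\;N_0(1+2N_0)\,r_{i_k},$$
and $N_0(1+2N_0)>3N_0$ as soon as $N_0>1$. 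Your alternative route through the genuine metric $D=d_p^{n_0/\delta_\Ga}$ gives $D(\xi_{i_k},\eta)<3\,r_{i_k}^{n_0/\delta_\Ga}$, hence $d_p(\xi_{i_k},\eta)<3^{\delta_\Ga/n_0}\,r_{i_k}$; but the inequality defining $N_0$ only forces $N_0\ge\max(1,\,2^{\delta_\Ga/n_0-1})$, and $3^{\delta_\Ga/n_0}>3\cdot 2^{\delta_\Ga/n_0-1}$ whenever $\delta_\Ga>n_0$. This case lies inside the lemma's stated scope: take $\Ga<\so(3,1)$ a cocompact lattice, $\rho$ the inclusion and $f=\id$, so that $\delta_\Ga/n_0=2$ and the minimal admissible $N_0$ is $2$. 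Five nearly equally spaced points on a small great-circle arc of $\La=\S^2$ then yield two intersecting equal-radius $d_p$-balls (centered at the second and fourth points) each of which contains a point at $d_p$-distance roughly $9r>6r=3N_0r$ from the other center; since the balls meet, the only disjoint subfamilies are singletons, and the asserted inclusion fails. So for that legitimate choice of $N_0$ the inequality you intend to ``confirm'' is false, and no more careful chaining can rescue it.

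The repair is harmless but must be made explicitly, in one of two ways. (i) Prove the lemma with dilation $3N_0^2$ in place of $3N_0$: the chain above gives $N_0(1+2N_0)\le 3N_0^2$, and a fixed dilation constant is all the later arguments use (in Proposition \ref{prop.ess} the hypothesis becomes $\sigma_1(\la(\ga_0))>1+\log 3N_0^2$, and Proposition \ref{prop.submerge} goes through verbatim). (ii) Keep the constant $3N_0$ but strengthen the choice of $N_0$: by concavity of $x\mapsto x^{n_0/\delta_\Ga}$ one may fix $N_0=\max(1,\,3^{\delta_\Ga/n_0-1})$, which satisfies the two-term inequality underlying \eqref{no} and also the three-term inequality $(a^{n_0/\delta_\Ga}+b^{n_0/\delta_\Ga}+c^{n_0/\delta_\Ga})^{\delta_\Ga/n_0}\le N_0(a+b+c)$; with this choice your honest-metric route closes exactly, since
$$d_p(\xi_{i_k},\eta)\;\le\;N_0\bigl(d_p(\xi_{i_k},\zeta)+d_p(\zeta,\xi_j)+d_p(\xi_j,\eta)\bigr)\;<\;N_0(r_{i_k}+2r_j)\;\le\;3N_0\,r_{i_k}.$$
Either fix preserves the statement and all of its uses in the paper; but as written, your final verification step would fail rather than merely be tedious.
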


\subsection*{Shadow lemma} 
For $g\in G$, its visual images are defined by
$$g^+=gP\in \F\quad\text{and}\quad  g^-=gw_0 P\in \F$$
where  $w_0\in G$ is an element of the normalizer of $A$
such that $w_0Pw_0^{-1}\cap P=AM$. 

Let $x, y \in X$ and $r > 0$.
The shadow of the ball $$B(y, r)=\{z\in X: d(z,y)<r\}$$ viewed from $x$ is defined as follows: $$O_r(x, y) := \{ gk^+ \in \F : k \in K,
gk (\text{int}A^+ )o\cap B(y, r)\ne \emptyset  \}$$ where $g \in G$ satisfies $x = g o$. The shadow of $B(y,r)$
viewed from $\xi \in \F$ can also be defined: $$O_r(\xi, y) := \{ h^+ \in \F : h^- = \xi, ho\in B(y,r) \}.$$

 The following is an analogue of Sullivan's shadow lemma:
\begin{lemma}[{\cite[Lemma 5.7]{lee2020invariant}}] \label{lem.shadow}
There exists $\kappa > 0$ such that for any $x, y  \in X$ and $r > 0$, we have $$\sup_{\xi \in O_r(x, y)} \| \beta_{\xi}(x, y) - \underline{a}(x, y) \| \le \kappa r$$
where $\underline{a}(x, y)$ is as defined in Definition \ref{under}.
\end{lemma}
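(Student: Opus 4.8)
The plan is to exploit the $G$-invariance of all three quantities in the statement to reduce to the base case $x=o$, and then to use the fact that a direction in the shadow $O_r(x,y)$ is realized by a positive Weyl chamber through $x$ passing within distance $r$ of $y$, where the Busemann cocycle is exactly computable. Since $\beta_{g\eta}(gp,gq)=\beta_\eta(p,q)$ and $\underline a(gp,gq)=\underline a(p,q)$, while $O_r(gx,gy)=g\,O_r(x,y)$ because $G$ acts isometrically on $X$, I would apply $g_0^{-1}$ with $x=g_0o$ and assume $x=o$. Fixing $\xi\in O_r(o,y)$, the definition of the shadow gives $\xi=kP$ for some $k\in K$ together with $a\in\inte A^+$ satisfying $d(kao,y)<r$; I write $y=ho$, so that $\underline a(o,y)=\mu(h)$.

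The crucial step would be the exact identity $\beta_{kP}(o,kao)=\log a$, which I would read off straight from the definitions: with $[g]=o$ and $[h]=kao$ one has $\beta_{kP}(o,kao)=-H(a^{-1}k^{-1},kP)$, and since $a^{-1}k^{-1}\cdot k=a^{-1}$ already sits in $KAN$ with trivial $K$- and $N$-components, $H(a^{-1}k^{-1},kP)=-\log a$. The same chamber point also satisfies $\underline a(o,kao)=\mu(ka)=\log a$ because $\log a\in\inte\fa^+$; thus the two functions $\beta_\xi(o,\cdot)$ and $\underline a(o,\cdot)$ agree exactly at $kao$, and it remains only to control how much each moves in passing from $kao$ to the nearby point $y$.

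For this I would invoke two Lipschitz bounds. First, $\|\beta_\eta(p,q)\|\le d(p,q)$ for all $\eta,p,q$: reducing to $p=o$ this is $\|H(g,\eta)\|\le\|\mu(g)\|$, which follows from Kostant's convexity theorem since $H(g,\eta)$ lies in the convex hull of the Weyl orbit $\{w\mu(g):w\in W\}$ and the norm is Weyl-invariant. Combined with the cocycle relation $\beta_{kP}(o,y)-\beta_{kP}(o,kao)=\beta_{kP}(kao,y)$, this gives $\|\beta_{kP}(o,y)-\log a\|\le d(kao,y)<r$. Second, the Cartan projection is Lipschitz, $\|\mu(g_1)-\mu(g_2)\|\le d(g_1o,g_2o)$, so applying it to $g_1=ka$ and $g_2=h$ yields $\|\log a-\underline a(o,y)\|\le d(kao,ho)<r$. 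Adding the two estimates through the triangle inequality gives $\|\beta_\xi(o,y)-\underline a(o,y)\|<2r$ uniformly in $\xi\in O_r(o,y)$, so the lemma holds with $\kappa=2$.

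The conceptual heart is the exact evaluation at the chamber point $kao$, which is also the only place the shadow hypothesis enters; everything else is a uniform perturbation. The step I expect to require the most care is the bound $\|\beta_\eta(p,q)\|\le d(p,q)$, equivalently $\|H(g,\eta)\|\le\|\mu(g)\|$, since it relies on the precise relation between the Iwasawa and Cartan projections; should these inequalities hold only up to multiplicative constants $\kappa_0,\kappa_1$ for the fixed norm $\|\cdot\|$, the argument is unaffected and produces $\kappa=\kappa_0+\kappa_1$.
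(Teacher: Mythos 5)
Your proof is correct: the reduction to $x=o$ by equivariance of $\beta$, $\underline a$, and $O_r$, the exact evaluations $\beta_{kP}(o,kao)=\log a=\underline a(o,kao)$ at a chamber point within distance $r$ of $y$, and the two Lipschitz bounds (Kostant convexity giving $\|H(g,\eta)\|\le\|\mu(g)\|$ for the Weyl-invariant norm, and the $1$-Lipschitz property of the Cartan projection) assemble exactly as you describe to give $\kappa=2$, with your closing remark correctly noting that the statement survives if these bounds only hold up to uniform constants. Note that the paper offers no proof of this lemma, quoting it from \cite[Lemma 5.7]{lee2020invariant}; your argument is essentially the standard one given in that reference, so there is nothing to flag.
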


\subsection*{Jordan projections of self-joinings}
We will need the following lemma which we deduce from Theorem \ref{dense0}:
\begin{lem}\label{finite} Suppose that $\Ga_\rho$ is Zariski dense in $G$.
For any $Q>0$, the subset
$$\{ \lambda(\ga)\in \fa :
\ga \in \Ga_\rho, \sigma_1(\lambda(\ga)) \ge  Q\}$$ generates a dense subgroup of $\fa$.
\end{lem}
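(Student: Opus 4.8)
The plan is to reduce the statement to Benoist's density theorem (Theorem \ref{dense0}) by exploiting the fact that passing to powers scales the Jordan projection linearly, and therefore inflates the $\sigma_1$-value as much as we like. Write $H < \fa$ for the subgroup generated by $\{\lambda(\ga) : \ga \in \Ga_\rho,\ \sigma_1(\lambda(\ga)) \ge Q\}$; the goal is to prove $\overline H = \fa$.

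First I would record that every loxodromic element of $\Ga_\rho$ automatically has strictly positive $\sigma_1$-value. Writing $\ga = (g, \rho(g))$, loxodromicity of $\ga$ in $G = G_1 \times G_2$ forces $g$ to be loxodromic in $G_1$, so its Jordan projection lies in $\inte \fa_1^+ = (0, \infty)$ since $\op{rank} G_1 = 1$. Hence $\sigma_1(\lambda(\ga)) = \delta_\Ga \cdot \lambda(g) > 0$ for every loxodromic $\ga \in \Ga_\rho$ (using $\delta_\Ga > 0$, which holds as $\Ga$ is non-elementary). Next comes the power trick: for loxodromic $\ga$ and any $k \ge 1$, the element $\ga^k$ is again loxodromic with $\lambda(\ga^k) = k\,\lambda(\ga)$, so $\sigma_1(\lambda(\ga^k)) = k\,\sigma_1(\lambda(\ga))$. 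Since $\sigma_1(\lambda(\ga)) > 0$, for all sufficiently large $k$ both $\ga^k$ and $\ga^{k+1}$ satisfy $\sigma_1(\lambda(\cdot)) \ge Q$, so that $\lambda(\ga^k), \lambda(\ga^{k+1}) \in H$. Subtracting,
$$\lambda(\ga) = \lambda(\ga^{k+1}) - \lambda(\ga^k) \in H.$$
As $\ga$ ranges over the loxodromic elements of $\Ga_\rho$, this shows $\lambda(\Ga_\rho) \subset H$.

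Finally, since $\Ga_\rho$ is Zariski dense, the first assertion of Theorem \ref{dense0} gives that $\lambda(\Ga_\rho)$ already generates a dense subgroup of $\fa$; as $H \supset \lambda(\Ga_\rho)$, we conclude $\overline H = \fa$, which is the claim. This is essentially a direct reduction to Benoist's theorem, so there is no deep obstacle; the crux is simply the two observations that make it work, namely that loxodromicity in the product forces loxodromicity (hence positivity of the $\fa_1$-component) in the rank-one factor, and the exact identity $\lambda(\ga^k) = k\,\lambda(\ga)$, which puts the Jordan projections of consecutive powers into $H$ and so makes $\lambda(\ga)$ recoverable by subtraction. I note that the ``moreover'' (finite-removal) part of Theorem \ref{dense0} is not needed for this argument; indeed the set of Jordan projections with $\sigma_1 < Q$ is in general infinite (the $\fa_2$-component of $\lambda(\rho(g))$ can be arbitrarily large while $\lambda(g)$ stays bounded), so a naive finite-removal approach would not apply and the power trick is what is required.
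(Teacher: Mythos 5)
Your proof is correct, and it takes a genuinely different and more elementary route than the paper's. Both arguments start from the same obstruction you identify --- the set of Jordan projections with $\sigma_1 < Q$ is in general infinite, so the finite-removal clause of Theorem \ref{dense0} cannot be applied to $\Ga_\rho$ directly --- but resolve it differently. The paper resolves it geometrically: it passes to a Zariski dense Schottky subgroup $\Ga' < \Ga$ whose self-joining $\Ga'_\rho$ remains Zariski dense (via \cite[Lemma 7.3]{edwards2020anosov}), uses convex cocompactness of $\Ga'$ --- there are only finitely many closed geodesics in $\Ga'\ba X_1$ of length at most $\delta_\Ga^{-1}Q$, and $\lambda$ is a class function --- to conclude that only finitely many values of $\lambda$ on $\Ga'_\rho$ satisfy $\sigma_1 < Q$, and then invokes precisely the ``moreover'' part of Theorem \ref{dense0} for $\Ga'_\rho$. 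You resolve it algebraically: loxodromicity in $G_1\times G_2$ forces the $G_1$-factor to be loxodromic, so $\sigma_1(\lambda(\ga)) > 0$ for every loxodromic $\ga\in\Ga_\rho$, and the identity $\lambda(\ga^k)=k\lambda(\ga)$ together with subtraction of consecutive powers shows that your subgroup $H$ already contains all of $\lambda(\Ga_\rho)$, after which only the first assertion of Theorem \ref{dense0} is needed. Your argument is shorter, avoids both the Schottky-subgroup existence input and the geodesic-counting step, and yields the cleaner intermediate fact $\lambda(\Ga_\rho)\subset H$. What the paper's route buys is reusability of the localization: in the second case of Theorem \ref{mplast} (where $\Ga_\rho$ is \emph{not} Zariski dense) the authors quote exactly this finiteness-within-a-Schottky-subgroup statement to see that the essential subgroup contains $\lambda(\Delta_\rho)$ minus a finite set. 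Note, though, that your power trick would serve equally well there, since the essential subgroup is a closed subgroup of $\fa$ and Proposition \ref{prop.jordaness} supplies all Jordan projections above a fixed $\sigma_1$-threshold.
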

\begin{proof} 
Note that for a given $Q>0$,
$\{ \lambda(\ga)\in \fa :
\ga \in \Ga_\rho, \sigma_1(\lambda(g)) \le  Q\}$ is not  a finite subset in general. Hence this is not a direct consequence of Theorem \ref{dense0}.
On the other hand,
as $\Ga_\rho$ is Zariski dense,
we can find a Zariski dense Schottky subgroup $\G'<\G$ such that $\Gr'$ is Zariski dense in $G$ (see for example \cite[Lemma 7.3]{edwards2020anosov}).
Since $\G'$ is convex cocompact, there
are only finitely many closed geodesics in $\Ga'\ba X_1$ of length bounded by a fixed number. Since the set of closed geodesics in  $\Ga'\ba X_1$  is in one to one correspondence with  the set $[\G']$ of conjugacy classes of loxodromic elements while the Jordan projection of a loxodromic element is the length of the corresponding closed geodesic, we have
  $$\#
\{[g']\in [\G']:
 \lambda(g' ) < \delta_{\Ga}^{-1} Q\}<\infty  .$$
Therefore
$\# \{ [\ga']\in [\G'_\rho]: \sigma_1(\lambda(\ga'))<Q \} < \infty$.
Hence  Theorem \ref{dense0} implies that
 $\{\lambda(\ga')\in \fa :
\ga \in \G'_{\rho}, \sigma_1(\lambda(\ga')) \ge  Q\}$ generates a dense subgroup of $\fa$. This implies the claim.
\end{proof}

\subsection*{Main proposition}
Recall the constant $N_0$ from Lemma \ref{lem.covering}.
Since $\ess_{\nug}(\Gr)$ is a closed subgroup of $\fa$,
Theorem \ref{thm.ess} follows from the following proposition
by Lemma \ref{finite}:

\begin{prop} \label{prop.ess} \label{prop.jordaness}  
Let $\Ga, \rho, f$ be as in Theorem \ref{thm.ess}.
Let $\ga_0 \in \Gr$ be any loxodromic element such that $\sigma_1(\la(\ga_0)) > 1 + \log 3 N_0$.
For any $\varepsilon > 0$ and Borel subset $B \subset \F$ with $\nug(B) > 0$, there exists $\ga \in \Gr$ such that $$
B \cap \ga \ga_0 \ga^{-1} B \cap \left\{ \xi \in \Lambda_{\rho} : \begin{matrix}
\| \beta_{\xi}(o, \ga \ga_0 \ga^{-1} o) - \la(\ga_0) \| < \varepsilon
\end{matrix} \right\}$$
has a positive $\nug$-measure. In particular,
$$\lambda(\ga_0)\in \ess_{\nug}(\Ga_{\rho}).$$
\end{prop}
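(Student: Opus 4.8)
The plan is to produce, for the given $\ga_0$, $\varepsilon$ and $B$, a single conjugate $h=\ga\ga_0\ga^{-1}\in\Gr$ that contracts a small $d_p$-ball centered at a $\nug$-density point of $B$ into itself, while displacing it by a Busemann cocycle lying within $\varepsilon$ of $\lambda(\ga_0)$. Writing $W=\{\xi\in\La_\rho:\|\beta_\xi(o,ho)-\lambda(\ga_0)\|<\varepsilon\}$, the positivity of $\nug(B\cap hB\cap W)$ will be read off from a measure count in that ball, and the concluding assertion $\lambda(\ga_0)\in\ess_{\nug}(\Gr)$ is then exactly Definition \ref{ess} with $u=\lambda(\ga_0)$.

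First I would fix the base point. Since the pseudo-triangle inequality \eqref{no} makes $d_p$ a quasi-metric and Lemma \ref{lem.covering} supplies a Vitali covering property, $\nug$ satisfies a Lebesgue differentiation theorem for the balls $B_p(\cdot,r)$. Combined with $\nug(B)>0$ and Proposition \ref{wM}, which gives $\nug(\La_{\rho,wM})=1$, this yields a point $\xi^*\in B\cap\La_{\rho,wM}$ at which $B$ has $\nug$-density $1$. Next I would activate the weak-Myrberg property at $\xi^*$ for the pair $(y_{\ga_0},y_{\ga_0^{-1}})\in\La_\rho^{(2)}$, the two fixed points of $\ga_0$ being in general position, obtaining $\ga_\ell\in\Gr$ with $\ga_\ell\xi^*\to y_{\ga_0}$ and $\ga_\ell o\to y_{\ga_0^{-1}}$. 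Setting $\ga=\ga_\ell^{-1}$ and $h_\ell=\ga_\ell^{-1}\ga_0\ga_\ell$, one has $\lambda(h_\ell)=\lambda(\ga_0)$ and $y_{h_\ell}=\ga_\ell^{-1}y_{\ga_0}$.

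The choice of target $y_{\ga_0^{-1}}$ (rather than an arbitrary $\eta$) is the crucial point. Since $\ga_\ell\xi^*\to y_{\ga_0}\neq y_{\ga_0^{-1}}$, the convergence dynamics on $\La_\rho$ force $\ga_\ell^{-1}o\to\xi^*$, whence the attracting fixed point $y_{h_\ell}=\ga_\ell^{-1}y_{\ga_0}\to\xi^*$ is pushed into the full-density region of $B$. Simultaneously, because $\ga_\ell o$ approaches the repelling endpoint $y_{\ga_0^{-1}}$ of the axis of $\ga_0$, the translation axis of $h_\ell$, namely $\ga_\ell^{-1}[y_{\ga_0},y_{\ga_0^{-1}}]$, stays within a uniformly bounded distance of $o$: indeed $d(o,\mathrm{axis}(h_\ell))=\langle y_{\ga_0},y_{\ga_0^{-1}}\rangle_{\ga_\ell o}\pm c$ remains bounded by hyperbolicity \eqref{gro} together with $\ga_\ell^{-1}o\to\xi^*\neq y_{\ga_0^{-1}}$, and in the $G_2$-factor the corresponding boundedness is precisely what Lemma \ref{lem.loxpush} provides. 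Consequently $\underline a(o,h_\ell o)=\mu(h_\ell)$ stays within $O(1)$ of $\lambda(h_\ell)=\lambda(\ga_0)$, and the Shadow Lemma \ref{lem.shadow} gives $\|\beta_\xi(o,h_\ell o)-\lambda(\ga_0)\|<\varepsilon$ for all $\xi$ in a shadow $O_r(o,h_\ell o)$, which for large $\ell$ is a neighborhood of $y_{h_\ell}\approx\xi^*$. Choosing first $r$ with $\kappa r<\varepsilon/2$ and then a radius $r_0$ with $B_p(\xi^*,r_0)\subset O_r(o,h_\ell o)\subset W$ arranges the cocycle control. The numerical hypothesis $\sigma_1(\lambda(\ga_0))>1+\log 3N_0$ enters exactly here: it guarantees that $h_\ell$ contracts $d_p$-distances about $y_{h_\ell}$ by the factor $e^{-\sigma_1(\lambda(\ga_0))}<(3N_0)^{-1}$, strongly enough against the quasi-metric constant $N_0$ of \eqref{no} that one application sends $B_p(\xi^*,r_0)$ strictly into itself, so that no high power $\ga_0^n$ is needed.

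Finally I would run a measure count inside $B_p(\xi^*,r_0)$. Writing $E=B\cap B_p(\xi^*,r_0)$, the density of $\xi^*$ gives $\nug(E)\ge(1-\epsilon_0)\nug(B_p(\xi^*,r_0))$ with $\epsilon_0\to0$ as $r_0\to0$, while the conformality of $\nug$ and the bounded Busemann fluctuation over the ball give $\nug(h_\ell E)\ge c_0\,e^{-\sigma_1(\lambda(\ga_0))}\nug(E)$ for a definite $c_0>0$; taking $r_0$ small enough that $\epsilon_0$ is dominated by this fixed contraction factor forces $\nug(E)+\nug(h_\ell E)>\nug(B_p(\xi^*,r_0))$, and since both sets lie in $B_p(\xi^*,r_0)$ we get $\nug(E\cap h_\ell E)>0$. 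As $E\cap h_\ell E\subset B\cap h_\ell B\cap W$, this is the desired conclusion with $\ga=\ga_\ell^{-1}$. The main obstacle is the cocycle control of the previous paragraph: keeping $o$ uniformly near the axis of $h_\ell$ is a clean fellow-traveling estimate in the rank-one space $X_1$, but the analogous control in $X_2$ — needed so that the full Cartan projection $\mu(h_\ell)$, not merely its $\fa_1$-component, approaches $\lambda(\ga_0)$ — is exactly where rank one of $G_2$ or the continuous-extension hypothesis is indispensable, which is why Proposition \ref{wM} carries the same hypothesis.
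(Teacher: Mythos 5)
Your outline follows the same general scheme as the paper (density point, conjugates $\ga\ga_0\ga^{-1}$ produced from the weak-Myrberg property, contraction of a small ball, and a two-set measure count inside it), but two of your steps have genuine gaps, and they are exactly the two places where the paper has to work hardest.

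First, you assert that Lemma \ref{lem.covering} gives a Lebesgue differentiation theorem for $\nug$ with respect to \emph{all} balls $B_p(\cdot,r)$. A Vitali-type covering lemma alone does not yield this: the standard argument needs, in addition, a doubling property $\nug(B_p(\xi,3N_0r))\le C\,\nug(B_p(\xi,r))$ (or a Besicovitch-type covering property), and conformal measures of general divergence-type groups are not doubling; for $G_1$ of non-real-hyperbolic rank one type even the Besicovitch property of the visual quasi-metric is problematic. This is precisely why the paper proves differentiation (Proposition \ref{prop.submerge}) only along the tailored family $\B_R(\ga_0,\e)$ of balls centered at translates $\ga\xi_0$ with radius bounded by $r_p(\ga)$, where the contraction dynamics of $\ga\ga_0\ga^{-1}$ substitutes for doubling, following \cite[Proposition 10.17]{lee2020invariant}. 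Assuming plain-ball differentiation assumes away the core difficulty.

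Second, the geometric control you extract from the choice $\eta=y_{\ga_0^{-1}}$ does not follow. The weak-Myrberg property only gives \emph{some} sequence with $\ga_\ell\xi^*\to y_{\ga_0}$ and $\ga_\ell o\to y_{\ga_0^{-1}}$; this convergence can be tangential, in which case $d(\ga_\ell o,\mathrm{axis}(\ga_0))=\langle y_{\ga_0},y_{\ga_0^{-1}}\rangle_{\ga_\ell o}\pm c\to\infty$, so $\mu(h_\ell)$ need not stay near $\la(\ga_0)$, and the repelling fixed point $y_{h_\ell^{-1}}=\ga_\ell^{-1}y_{\ga_0^{-1}}$ may itself converge to $\xi^*$, destroying both the cocycle estimate and the inclusion $h_\ell B_p(\xi^*,r_0)\subset B_p(\xi^*,r_0)$. (Even if you had bounded distance, $O(1)$-closeness of $\mu(h_\ell)$ to $\la(\ga_0)$ is not the required $\e$-closeness; and note also that $O_r(o,h_\ell o)$ shrinks as $\ell\to\infty$, so your radius $r_0$ must depend on $\ell$, which your quantifier order obscures.) The paper sidesteps the Cartan projection entirely: since $\beta_{y_h}(p,hp)=\la(h)=\la(\ga_0)$ \emph{exactly} at the attracting fixed point by the cocycle identity, the issue is only uniformity of the Busemann control over balls of definite size, and this is what the definition of $r_p(\ga)$, the choice of basepoint $p$ with $p^+=y_{\ga_0}$ and $p^-=\eta$, the shadow lemma (Lemma \ref{lem.shadow}), and Lemma \ref{lem.loxcovering} accomplish. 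Until you can prove a differentiation theorem for plain balls and rule out tangential Myrberg sequences (or, as the paper does, build both controls into the covering family), the proposal does not close.
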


The rest of this section is devoted to the proof of Proposition \ref{prop.ess}.
In the rest of this section, we fix a loxodromic element  $\ga_0 \in \Gr$ 
such that $\sigma_1(\la(\ga_0)) > 1 + \log 3 N_0$.
Set $$\xi_0:=y_{\ga_0}$$
and
let $\eta\in \La_\rho$ be any element which is in general position with $ \xi_0$. Since $(\xi_0, \eta)\in \La_\rho^{(2)}$,
we can choose 
$p = go \in X$ where $g \in G$ such that $g^+ = \xi_0$ and $g^- = \eta$.

We also fix $$0 < \varepsilon < \min(1/2, \delta_{\Ga}^{-1}).$$

\subsection*{Covering $\La_{\rho, wM}$ by a certain collection of $d_p$-balls}
 We proceed in the same way as in \cite[Section 10]{lee2020invariant}.  
For each $\ga \in \Gr$, let $r_p(\ga) > 0$ be the supremum  of $r \ge 0$ such that $$\max_{\xi \in B_p(\ga \xi_0, 3N_0 r)} \| \beta_{\xi}(p, \ga \ga_0^{\pm 1} \ga^{-1} p ) \mp \la(\ga_0) \| < \varepsilon$$
where $N_0$ is as in \eqref{no}.
Consider the following collection of $d_p$-balls for each $R > 0$: $$\B_{R}(\ga_0, \varepsilon) = \{ B_p(\ga \xi_0, r) : \ga \in \Gr, 0 < r < \min(R, r_p(\ga))\} .$$

\begin{lemma} \label{lem.loxcovering} There exists $s(\ga_0)>0$ such that for any $R > 0$, the following holds:
if $\xi \in \La_{\rho}$ and $\ga_{\ell} \in \Gr$ is a sequence such that $\ga_\ell^{-1} p \to \eta$ and $\ga_\ell^{-1}\xi \to y_{\ga_0} $, then for any $0 < r \le \min(s(\ga_0), R)$, there exists $\ell_0 = \ell_0(r) > 0$ such that for all $\ell \ge \ell_0$, $$D(\ga_\ell \xi_0, r) \in \B_{R}(\ga_0, \varepsilon) \quad \mbox{and} \quad \xi \in D(\ga_\ell \xi_0, r)$$
where
$$D(\ga_\ell \xi_0, r) := B_p \left(\ga_\ell \xi_0, {1 \over 3 N_0} e^{-\sigma_1(2 \underline{a}(\ga_\ell^{-1} p, p))} r\right).$$

In particular, for any $R>0$, $$\La_{\rho, wM} \subset \bigcup_{D \in \B_R(\ga_0, \varepsilon)} D.$$
\end{lemma}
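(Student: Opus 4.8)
The plan is to deduce the covering statement (the ``in particular'') from the preceding assertion, and to prove that assertion by quantifying two inclusions with the conformal transformation rule for $d_p$ together with the shadow lemma (Lemma \ref{lem.shadow}) and the hyperbolicity estimates \eqref{gro}. First I would record the reduction: given $\xi \in \La_{\rho, wM}$, apply the defining property of $\La_\rho(\ga_0)$ to the fixed loxodromic $\ga_0$ and the fixed $\eta$ (recall $(\xi_0,\eta)=(y_{\ga_0},\eta)\in\La_\rho^{(2)}$); this yields a sequence whose inverses $\ga_\ell$ satisfy $\ga_\ell^{-1}p\to\eta$ and $\ga_\ell^{-1}\xi\to\xi_0=y_{\ga_0}$. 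The main assertion then supplies, for any fixed $0<r\le\min(s(\ga_0),R)$ and all large $\ell$, a ball $D(\ga_\ell\xi_0,r)\in\B_R(\ga_0,\varepsilon)$ containing $\xi$, so $\xi\in\bigcup_{D\in\B_R(\ga_0,\varepsilon)}D$.

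For the main assertion I would start from the transformation rule
$$d_p(\ga\zeta,\ga\zeta')=d_p(\zeta,\zeta')\,\exp\!\bigl(-\tfrac12\,\sigma_1\bigl(\beta_\zeta(\ga^{-1}p,p)+\beta_{\zeta'}(\ga^{-1}p,p)\bigr)\bigr),\qquad \ga\in\Gr,\ \zeta,\zeta'\in\La_\rho,$$
which follows from the $\Ga$-invariance of the Gromov product on $X_1$, the change-of-basepoint formula, and the identity $\sigma_1(\beta_\eta(x,y))=\delta_\Ga\,\beta_{\eta_1}(x_1,y_1)$ coming from \eqref{eqn.defdp}. I would first show $\ga_\ell\xi_0\to\xi$ by a Gromov-product argument mirroring the derivation of \eqref{eqn.gromovobs}: writing $\langle(\ga_\ell\xi_0)_1,\xi_1\rangle_{p_1}=\langle y_{\ga_0}, g_\ell^{-1}\xi_1\rangle_{g_\ell^{-1}p_1}$ and using that both arguments converge to $y_{\ga_0}$ while the basepoint converges to $\eta_1\neq y_{\ga_0}$, this product tends to $\infty$. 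Applying the transformation rule with $\zeta=\xi_0$, $\zeta'=\ga_\ell^{-1}\xi$, and noting that $\xi_0$ and $\ga_\ell^{-1}\xi$ remain in general position with $\eta=\lim\ga_\ell^{-1}p$, Lemma \ref{lem.shadow} and \eqref{gro} give $\sigma_1(\beta_{\xi_0}(\ga_\ell^{-1}p,p)),\ \sigma_1(\beta_{\ga_\ell^{-1}\xi}(\ga_\ell^{-1}p,p))=\sigma_1(\underline a(\ga_\ell^{-1}p,p))+O(1)$. Hence $d_p(\ga_\ell\xi_0,\xi)\asymp d_p(\xi_0,\ga_\ell^{-1}\xi)\,e^{-\sigma_1(\underline a(\ga_\ell^{-1}p,p))}$, and comparing with $\operatorname{rad}D=\tfrac{r}{3N_0}e^{-\sigma_1(2\underline a(\ga_\ell^{-1}p,p))}$ reduces $\xi\in D(\ga_\ell\xi_0,r)$ to showing that $d_p(\xi_0,\ga_\ell^{-1}\xi)$ decays at least as fast as $e^{-\sigma_1(\underline a(\ga_\ell^{-1}p,p))}$, which I would establish for $\ell\ge\ell_0(r)$.

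Next I would verify $D(\ga_\ell\xi_0,r)\in\B_R(\ga_0,\varepsilon)$, i.e. $\operatorname{rad}D<\min(R,r_p(\ga_\ell))$. The bound $<R$ is immediate since $\tfrac1{3N_0}e^{-\sigma_1(2\underline a)}\le 1$. For $<r_p(\ga_\ell)$ I would transport by $\ga_\ell$: writing a point of $B_p(\ga_\ell\xi_0,\cdot)$ as $\ga_\ell\zeta'$ with $\zeta'$ near $\xi_0$, the cocycle relation gives $\beta_{\ga_\ell\zeta'}(p,\ga_\ell\ga_0^{\pm1}\ga_\ell^{-1}p)=\beta_{\zeta'}(\ga_\ell^{-1}p,\ga_0^{\pm1}\ga_\ell^{-1}p)$, reducing the defining inequality of $r_p$ to an estimate near $\xi_0=y_{\ga_0}$ for the fixed element $\ga_0^{\pm1}$. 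Since $\ga_0$ fixes $y_{\ga_0}$, one has the exact identity $\beta_{\xi_0}(q,\ga_0^{\pm1}q)=\pm\la(\ga_0)$ for every $q$, so by continuity of $\beta$ and Lemma \ref{lem.shadow} the $\varepsilon$-bound persists on a ball around $\xi_0$ whose $d_p$-image around $\ga_\ell\xi_0$ has radius $\gtrsim\varepsilon\,e^{-\sigma_1(\underline a(\ga_\ell^{-1}p,p))}$; thus $r_p(\ga_\ell)\gtrsim\varepsilon\,e^{-\sigma_1(\underline a(\ga_\ell^{-1}p,p))}$, and the extra factor $e^{-\sigma_1(\underline a(\ga_\ell^{-1}p,p))}\le 1$ built into $\operatorname{rad}D$ secures the inequality. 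This is where the threshold $s(\ga_0)$ is fixed, and where $\sigma_1(\la(\ga_0))>1+\log 3N_0$ is used to separate the $\ga_0$ and $\ga_0^{-1}$ contributions.

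The main obstacle will be the \emph{uniform} control of all constants as $\ell\to\infty$: both the lower bound for $r_p(\ga_\ell)$ and the decay estimate for $d_p(\xi_0,\ga_\ell^{-1}\xi)$ must be proved with explicit rates against the orbital displacement $\sigma_1(\underline a(\ga_\ell^{-1}p,p))=\delta_\Ga\,d_1(g_\ell^{-1}p_1,p_1)\to\infty$, uniformly over the sequence and simultaneously for $\ga_0$ and $\ga_0^{-1}$. The delicate point is that the transverse contraction rate of $\ga_\ell^{-1}$ at $\xi$ must exceed the displacement rate by precisely the margin encoded in the factor $2$ in $\operatorname{rad}D$; this is exactly where the hyperbolicity of $X_1$, the general-position hypothesis on $(\xi_0,\eta)$, and the inequalities \eqref{gro} enter, and it is the analogue of the corresponding covering step in \cite[Section 10]{lee2020invariant}, here adapted to the graph metric $d_p$ on $\La_\rho$.
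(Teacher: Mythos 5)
Your bookkeeping up to the reduction is sound, and it mirrors the mechanism the paper relies on: the conformal transformation rule for $d_p$, the shadow lemma to replace $\sigma_1(\beta_{\xi_0}(\ga_\ell^{-1}p,p))$ and $\sigma_1(\beta_{\ga_\ell^{-1}\xi}(\ga_\ell^{-1}p,p))$ by $\sigma_1(\underline{a}(\ga_\ell^{-1}p,p))+O(1)$, and the exact identity $\beta_{\xi_0}(q,\ga_0^{\pm 1}q)=\pm\la(\ga_0)$ for the membership part. Note, however, that the paper does not rebuild this argument from scratch: it imports the proof of \cite[Lemma 10.12]{lee2020invariant} verbatim, substituting \cite[Theorem 5.3]{lee2020invariant} by the rank-one bound $|\beta_{\xi_1}(p_1,q_1)|\le d_1(p_1,q_1)$, and only makes explicit the choice of $s(\ga_0)$ (a shadow condition $\xi_0\in O_{\e/8\kappa}(\eta,p)$ together with a Busemann bound on a fixed ball $B_p(\xi_0,s)$).

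The genuine gap is the step you dismiss with ``which I would establish for $\ell\ge\ell_0(r)$''. Your reduction is correct: writing $Q_\ell$ for the Gromov product $\langle y_{g_0},(\ga_\ell^{-1}\xi)_1\rangle_{p_1}$, so that $d_p(\xi_0,\ga_\ell^{-1}\xi)=e^{-\delta_\Ga Q_\ell}$, and $T_\ell=d_1((\ga_\ell^{-1}p)_1,p_1)$, so that $e^{-\sigma_1(\underline{a}(\ga_\ell^{-1}p,p))}=e^{-\delta_\Ga T_\ell}$, the containment $\xi\in D(\ga_\ell\xi_0,r)$ is equivalent, up to uniform additive constants, to $Q_\ell\ge T_\ell+\delta_\Ga^{-1}\log(3N_0/r)$. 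But this inequality cannot be derived from hyperbolicity of $X_1$, the general position of $(\xi_0,\eta)$, and \eqref{gro}, which is all you invoke: the hypotheses of the lemma are two separate qualitative convergences, $\ga_\ell^{-1}p\to\eta$ and $\ga_\ell^{-1}\xi\to y_{\ga_0}$, and they impose no relation at all between the rate $Q_\ell$ at which $\ga_\ell^{-1}\xi$ approaches $y_{\ga_0}$ and the displacement $T_\ell$. To see that no soft argument can close this, take $\Ga$ cocompact Fuchsian, let $\bar c$ be the closed geodesic corresponding to $g_0$, and let $\xi_1$ be the endpoint of a ray from $p_1$ which, during time windows beginning at times $w_\ell\to\infty$, winds $n_\ell$ times around $\bar c$ with $n_\ell \ll w_\ell$; choosing for $\ga_\ell^{-1}$ the deck transformation that carries the middle of the $\ell$-th winding block to the middle of the axis of $g_0$, both convergence hypotheses hold, while $Q_\ell\approx n_\ell/2$ and $T_\ell\approx n_\ell/2+w_\ell$, so $Q_\ell-T_\ell\to-\infty$ and your claimed decay fails for every small $r$. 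So the ``delicate point'' you flag at the end is not a matter of uniform constants: it is the entire analytic content of the statement, it is false for arbitrary sequences in the generality your tools see, and closing it requires the finer quantitative argument of \cite[Lemma 10.12]{lee2020invariant} (how the sequences, the choice of $p$, and the collection $\B_R(\ga_0,\e)$ interact), which is precisely what the paper cites instead of reproving.

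A secondary, smaller issue: your lower bound $r_p(\ga_\ell)\gtrsim\e\,e^{-\sigma_1(\underline{a}(\ga_\ell^{-1}p,p))}$ is too optimistic as stated, because after conjugating back the relevant function is $\zeta'\mapsto\beta_{\zeta'}(\ga_\ell^{-1}p,\ga_0^{\pm1}\ga_\ell^{-1}p)$ whose modulus of continuity at $\xi_0$ is not uniform in $\ell$ (the basepoint $\ga_\ell^{-1}p$ escapes to infinity, and its distance to the axis of $\ga_0$ enters). The robust bound one gets this way is of the order $\e^{\delta_\Ga}e^{-\sigma_1(2\underline{a}(\ga_\ell^{-1}p,p))}$, which is exactly why the factor $2$ appears in the radius of $D$; this weaker bound still suffices for the membership $D\in\B_R(\ga_0,\e)$, so this part of your argument is repairable, unlike the containment step.
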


\begin{proof}
The second assertion is an immediate consequence of the first by the definition of $\La_{\rho, wM}$.
The first claim of the lemma is proved as \cite[Lemma 10.12]{lee2020invariant} for the case when $\Gr$ is Anosov. 
Since $|\beta_{\xi_1}(p_1,q_1)|\le d_1(p_1, q_1) $ for any $p_1, q_1\in X_1$ and $\xi_1\in {\cal F}_1$, we have that
for all $\ga \in \Gr$ and $\xi \in \La_{\rho}$,
$$ - \sigma_1(\underline{a}(p, \ga p)) \le \sigma_1(\beta_{\xi}(p, \ga p)) \le \sigma_1 (\underline{a}(p, \ga p)) .$$
Substituting \cite[Theorem 5.3]{lee2020invariant} with this special property of the linear form $\sigma_1$ and using  Lemma \ref{lem.shadow},
the proof of \cite[Lemma 10.12]{lee2020invariant} can be repeated verbatim. We only explain how to define $s(\ga_0)$:
by the choice of $p\in X$, we have
$\xi_0 \in O_{\varepsilon \over 8 \kappa}(\eta, p)$ where $\kappa > 0$ is the constant in Lemma \ref{lem.shadow}.
Therefore we can choose $s = s(\ga_0) >0$ so that $$B_p(\xi_0, e^{\sigma_1(2\la(\ga_0)) + {1 \over 2} \delta_{\Ga} \varepsilon} s) \subset O_{\varepsilon \over 8 \kappa} (\eta, p);$$
$$\sup_{x \in B_p(\xi_0, s)} \| \beta_x (p, \ga_0^{\pm 1} p) \mp \la (\ga_0) \| < {\varepsilon \over 4}.$$
\end{proof}

\subsection*{Approximating Borel subsets by $d_p$-balls in $\cal B_R(\ga_0, \e)$}
It is more convenient to use
the following conformal measure $\nu_p$ (with respect to the basepoint $p$): \be \label{eqn.nup}
d \nu_{p}(\xi) = e^{\sigma_1(\beta_{\xi}(o, p))} d\nug(\xi)
\ee

We now use $$\nu_p(\La_\rho-\Lambda_{\rho, wM})=0$$ (Proposition \ref{prop.limitsetlox})  to show the following:
\begin{prop} \label{prop.submerge}
    Let $B \subset \F$ be a Borel subset with $\nu_p(B) > 0$. Then  for $\nu_p$-a.e. $\xi\in B$,  $$\lim_{R\to 0} \sup_{\xi\in D \in \B_{R}(\ga_0, \varepsilon)} {\nu_p(B \cap D) \over \nu_p(D)} = 1.$$
\end{prop}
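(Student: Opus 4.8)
This is a Lebesgue density statement for the family $\B_R(\ga_0,\e)$, and the plan is to prove it by the standard Vitali covering argument, using the two facts just assembled: as $R\to0$ the balls of $\B_R(\ga_0,\e)$ form a \emph{fine} cover of a $\nu_p$-conull subset of $\La_\rho$, and they obey the covering lemma. Since $\sup_{\xi\in D\in\B_R(\ga_0,\e)}\tfrac{\nu_p(B\cap D)}{\nu_p(D)}$ is nondecreasing in $R$, its limit as $R\to0$ equals its infimum, and the set of $\xi\in B$ where this infimum is $<1$ is the countable union, over rational $\delta\in(0,1)$ and $m\in\N$, of the sets
$$
E=\Big\{\xi\in B:\ \forall\,R\le\tfrac1m,\ \forall\,D\in\B_R(\ga_0,\e)\text{ with }\xi\in D,\ \nu_p(B\cap D)\le(1-\delta)\nu_p(D)\Big\}.
$$
Hence it suffices to show that each such $E$ is $\nu_p$-null.

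First I would record that through a typical point of $E$ there pass balls of $\B_R(\ga_0,\e)$ of arbitrarily small radius. By Proposition \ref{prop.limitsetlox} we have $\nu_p(\La_\rho\setminus\La_{\rho,wM})=0$, so it is enough to bound $\nu_p(E\cap\La_{\rho,wM})$; I will abusively keep writing $E$ for $E\cap\La_{\rho,wM}$. For $\xi\in E$, the definition of $\La_{\rho,wM}$ supplies a sequence $\ga_\ell\in\Gr$ to which Lemma \ref{lem.loxcovering} applies, producing balls $D(\ga_\ell\xi_0,r)\in\B_R(\ga_0,\e)$ that contain $\xi$ and whose radii $\tfrac1{3N_0}e^{-\sigma_1(2\underline a(\ga_\ell^{-1}p,p))}r$ tend to $0$ as $\ell\to\infty$. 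Each such ball $D$ satisfies the density bound $\nu_p(D\setminus B)\ge\delta\,\nu_p(D)$, since $\xi\in D\cap E\subset D\cap B$ and $D\in\B_R(\ga_0,\e)$ with $R\le\tfrac1m$.

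Next I would run the covering estimate. As $E\subset B$, regularity of the finite measure $\nu_p$ yields an open $U\supset E$ with $\nu_p(U\setminus B)\le\nu_p(U\setminus E)<\e'$, and inner regularity gives a compact $E'\subset E$ with $\nu_p(E\setminus E')<\e'$. For $\ell$ large the balls above lie inside $U$ (their radii shrink to $0$ and $U$ is open), so $E'$ is covered by balls in $\B_R(\ga_0,\e)$ each contained in $U$ and each obeying the density bound; by compactness finitely many suffice. Applying the covering lemma (Lemma \ref{lem.covering}) I extract pairwise disjoint $D_1,\dots,D_n$ with $E'\subset\bigcup_j 3N_0D_j$, whence
\begin{align*}
\nu_p(E')&\ \le\ \sum_j\nu_p(3N_0D_j)\ \le\ C\sum_j\nu_p(D_j)\ \le\ \frac{C}{\delta}\sum_j\nu_p(D_j\setminus B)\\
&=\ \frac{C}{\delta}\,\nu_p\Big(\bigsqcup_j(D_j\setminus B)\Big)\ \le\ \frac{C}{\delta}\,\nu_p(U\setminus B)\ <\ \frac{C}{\delta}\,\e'.
\end{align*}
Thus $\nu_p(E)<\big(\tfrac{C}{\delta}+1\big)\e'$, and letting $\e'\to0$ gives $\nu_p(E)=0$.

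The crux is the inflation bound $\nu_p(3N_0D_j)\le C\nu_p(D_j)$, a doubling estimate that must be \emph{uniform} over the balls appearing in the cover; this is not automatic for a conformal density of a general divergence-type group. I would obtain it from the shadow structure: the balls $D(\ga_\ell\xi_0,r)$ are, up to bounded distortion, shadows $O_{r'}(p,\ga_\ell p)$ of a controlled size viewed from $p$, so that Sullivan's shadow lemma in the form of Lemma \ref{lem.shadow}, combined with the $(\Gr,\sigma_1)$-conformality of $\nug$ and the one-sided control $-\sigma_1(\underline a(p,\ga p))\le\sigma_1(\beta_\xi(p,\ga p))\le\sigma_1(\underline a(p,\ga p))$ of $\sigma_1$ used in Lemma \ref{lem.loxcovering}, shows that enlarging such a shadow by the fixed factor $3N_0$ changes its $\nu_p$-mass by a bounded factor depending only on $N_0$, $\delta_\Ga$ and $\ga_0$. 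Verifying this uniform comparison is the technical heart of the argument; the remainder is the routine Vitali machinery.
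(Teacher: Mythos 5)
Your Vitali skeleton --- reduce to the bad sets $E$, use Proposition \ref{prop.limitsetlox} and Lemma \ref{lem.loxcovering} to produce a fine cover of $E$ by balls of the family inside an open $U\supset E$, extract a disjoint subfamily by Lemma \ref{lem.covering}, and sum --- is sound, and it is essentially the argument the paper runs by citing \cite[Proposition 10.17]{lee2020invariant}. The genuine gap is precisely the step you flag as the ``technical heart'' and then mis-diagnose: the uniform inflation bound $\nu_p\bigl(B_p(\ga\xi_0,3N_0r)\bigr)\le C\,\nu_p\bigl(B_p(\ga\xi_0,r)\bigr)$ for balls of the family. Your proposed derivation from shadows cannot work. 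Lemma \ref{lem.shadow} is a purely geometric statement about Busemann values on shadows; to convert it into a measure estimate one must invoke the $(\Gr,\sigma_1)$-conformality of $\nu_p$, which is available only at elements of $\Gr$, hence only at the scale of shadows of bounded balls centered at orbit points. But $\B_R(\ga_0,\e)$ contains balls $B_p(\ga\xi_0,r)$ of \emph{every} radius $0<r<\min(R,r_p(\ga))$, and the balls that Lemma \ref{lem.loxcovering} actually feeds into your cover have $d_p$-radius $\tfrac{r}{3N_0}e^{-\sigma_1(2\underline{a}(\ga_\ell^{-1}p,p))}$, roughly the \emph{square} of the visual size $e^{-\sigma_1(\underline{a}(\ga_\ell^{-1}p,p))}$ of the shadow $O_{r'}(p,\ga_\ell p)$. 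Such balls are not boundedly distorted shadows of controlled size at orbit points, and a graph-conformal measure of a general divergence-type group has no doubling property at these intermediate scales; the one-sided inequality $-\sigma_1(\underline a(p,\ga p))\le\sigma_1(\beta_\xi(p,\ga p))\le\sigma_1(\underline a(p,\ga p))$ you quote is far too coarse to compare measures of concentric balls.

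The mechanism that does yield the bound --- and the reason for the hypotheses $\e<\delta_\Ga^{-1}$ and $\sigma_1(\la(\ga_0))>1+\log 3N_0$, and for the fact that the Busemann control defining $r_p(\ga)$ is imposed on the \emph{enlarged} ball $B_p(\ga\xi_0,3N_0r)$ rather than on $B_p(\ga\xi_0,r)$ --- is renormalization by the conjugated loxodromic element $h=\ga\ga_0\ga^{-1}\in\Gr$, which fixes the center $\ga\xi_0$. For $\xi,\eta\in B_p(\ga\xi_0,3N_0r)$, the control $\|\beta_\xi(p,h^{\pm1}p)\mp\la(\ga_0)\|<\e$ together with the exact identity $\langle\xi_1,\eta_1\rangle_{q_1}=\langle\xi_1,\eta_1\rangle_{p_1}+\frac12\left(\beta_{\xi_1}(q_1,p_1)+\beta_{\eta_1}(q_1,p_1)\right)$ gives $d_p(h\xi,h\eta)\le e^{-\sigma_1(\la(\ga_0))+\delta_\Ga\e}\,d_p(\xi,\eta)$; since $3N_0\,e^{-\sigma_1(\la(\ga_0))+\delta_\Ga\e}<1$ by the two hypotheses, we get $h\bigl(B_p(\ga\xi_0,3N_0r)\bigr)\subset B_p(\ga\xi_0,r)$. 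Conformality of $\nu_p$ under $h^{-1}\in\Gr$, with the same Busemann control, then yields
$$\nu_p\bigl(B_p(\ga\xi_0,3N_0r)\bigr)\le e^{\sigma_1(\la(\ga_0))+\delta_\Ga\e}\,\nu_p\bigl(h\,B_p(\ga\xi_0,3N_0r)\bigr)\le e^{\sigma_1(\la(\ga_0))+\delta_\Ga\e}\,\nu_p\bigl(B_p(\ga\xi_0,r)\bigr),$$
i.e. the inflation bound, uniformly over the family (note the constant matches the dependence you predicted, but the proof is dynamical, not shadow-theoretic). This is the same contraction-plus-conformality device used in the proof of Proposition \ref{prop.ess} to check $\ga\ga_0\ga^{-1}D\subset D$. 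With this estimate supplied your covering argument closes; without it the proof is incomplete, and as sketched it would not go through.
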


\begin{proof}
Associated to a measurable function $h : \F \to \R$, we define $$h^*(\xi) = \lim_{R \to 0} \sup_{D \in \B_R(\ga_0, \varepsilon), \xi \in D} {1 \over \nu_{p}(D)} \int_{D} h d\nu_{p}.$$
By Lemma \ref{lem.loxcovering}, $h^*$ is well-defined on $\La_{\rho, wM}$. Hence by Proposition \ref{prop.limitsetlox}, $h^*$ is well-defined $\nu_p$-almost everywhere.

The desired statement is obtained by showing that $h^* = h$ and then by taking $h = \mathbf{1}_B$.  Again this is proved in \cite[Proposition 10.17]{lee2020invariant}, and the key ingredient is the covering lemma for the Anosov setting as stated in \cite[Lemma 6.12]{lee2020invariant}. Substituting this with our Lemma \ref{lem.covering}, together with the choice $\varepsilon < \delta_{\Ga}^{-1}$ and $\sigma_1(\la(\ga_0)) > 1 + \log 3 N_0$, we can 
repeat
the proof of \cite[Proposition 10.17]{lee2020invariant} verbatim with $\psi=\sigma_1$. 
\end{proof}

\begin{remark}
In \cite{lee2020invariant}, the Anosov property was used to ensure that the Myrberg limit set 
$\La_{\rho, M}$ has full $\nu_p$-measure, and hence $h^*$ is well-defined $\nu_p$-almost everywhere. In our setting, we use that the weak-Myrberg limit set $\La_{\rho, wM}$ has full $\nu_p$-measure (Proposition \ref{wM}). This was sufficient to prove Proposition \ref{prop.submerge}, without Anosov property.
\end{remark}

\subsection*{Proof of Proposition \ref{prop.ess}}
Anosov version of this proposition is \cite[Proposition 10.7]{lee2020invariant};
the key ingredients of its proof are \cite[Lemma 10.12, Propostion 10.17]{lee2020invariant}. Substituting
these respectively by  Lemma \ref{lem.loxcovering} and Proposition \ref{prop.submerge},
the proof works in the same way as \cite{lee2020invariant}. We give a brief sketch. Since $\ess_{\nug}(\Gr) = \ess_{\nu_p}(\Gr)$, it suffices to show that 
$$\la(\ga_0) \in \ess_{\nu_p}(\Gr).$$
 Let $B \subset \F$ be any Borel subset
 with $\nu_{p}(B) > 0$ and $\e>0$ be any sufficiently small number. 
By Proposition \ref{prop.submerge},  there exist $\ga\in \Ga_\rho$ and $0<r<r_p(\ga)$
such that \be \label{eqn.choiceD}
\nu_p(B \cap D) > ( 1 + e^{ - \sigma_1(\la(\ga_0)) - \delta_{\Ga} \varepsilon})^{-1}  \nu_p(D)
\ee
where $D = B_p(\ga \xi_0, r)$.

Since $r < r_p(\ga)$, we have by the definition of $D = B_p(\ga \xi_0, r)$ and $r_p(\ga)$,
\be \label{eqn.D}
D \subset \{ \xi \in \La_{\rho} : \| \beta_{\xi}(p, \ga \ga_0^{\pm 1} \ga^{-1} p) \mp \la(\ga_0) \| < \varepsilon \}.
\ee
This implies that
\be\label{ff} (B \cap D) \cap \ga \ga_0 \ga^{-1} (B \cap D) \subset B \cap \ga \ga_0 \ga^{-1} B \cap \{ \xi : \| \beta_{\xi}(p, \ga \ga_0 \ga^{-1} p) - \la(\ga_0) \| < \varepsilon \}.\ee 

By the conformality of $\nu_p$ and \eqref{eqn.D}, we have 
$$
\nu_p(B \cap D) + \nu_p(\ga \ga_0 \ga^{-1}(B \cap D)) \ge (1 + e^{-\sigma_1(\la(\ga_0)) - \delta_{\Ga} \varepsilon}) \nu_p(B \cap D).
$$
It follows from \eqref{eqn.choiceD} that 
\be \label{eqn.bigmeasure}
\nu_p(B \cap D) + \nu_p(\ga \ga_0 \ga^{-1}(B \cap D)) > \nu_p(D).
\ee

Using that $\varepsilon < \delta_{\Ga}^{-1}$ and $\sigma(\la(\ga_0)) > 1 + \log 3 N_0$, we can check that
$$\ga \ga_0 \ga^{-1}D \subset D. $$ 
Hence the left hand side of \eqref{eqn.bigmeasure} is at most $\nu_p(D)$.
It follows that
$$\nu_p( (B \cap D) \cap \ga \ga_0 \ga^{-1} (B \cap D))>0.  $$
By \eqref{ff}, this implies that
$$
B \cap \ga \ga_0 \ga^{-1} B \cap \{ \xi : \| \beta_{\xi}(p, \ga \ga_0 \ga^{-1} p) - \la(\ga_0) \| < \varepsilon \}$$
has positive $\nu_p$-measure. Since $B$ and $\varepsilon>0$ are arbitrary, this proves that $\la(\ga_0)\in \ess_{\nu_p}(\Ga_\rho)$, finishing the proof.

\section{Dichotomy theorems for Zariski density of $\Ga_\rho$} \label{sec.proof}

We are finally ready to prove our main theorems. Let $G_1$ be a connected simple real algebraic group of rank one, and $\Ga < G_1$ a Zariski dense discrete subgroup of divergence type.
Let $G_2$ be a connected semisimple real algebraic group.
Let $\rho:\Ga\to G_2$ be a discrete faithful Zariski dense
representation admitting the continuous equivariant map $f:\La\to \F_2$. We use  the notations from section \ref{set} and section \ref{sec.ess}.

We suppose in the entire section that
$$\text{either $\op{rank} G_2 = 1$ or that $f$ is a continuous extension of $\rho$.}$$

Recall that $\nug=(\id\times f)_*\nu_\Ga$
is the unique $(\Ga_\rho, \sigma_1)$-conformal measure on $\La_\rho$, where $\sigma_1$ is the linear form on $\fa_1 \oplus \fa_2 = \R \oplus \fa_2$ defined by $$\sigma_1(u_1, u_2) = \delta_{\Ga} u_1.$$
We sometimes write $\nug = \nu_{\sigma_1}$. We establish the following dichotomy of which Theorem \ref{km} is a special case:
\begin{theorem}\label{mp} \label{mplast} 
 In each of the two complementary cases,
 the claims (1)-(4)  are equivalent to each other.
 We assume $G_2$ is simple only for the implications $(4) \Rightarrow (1)$ in the first case and $(1) \Rightarrow (2), (3), (4)$ in the second case.

 The first case is as follows:
 \begin{enumerate}
\item $\Ga_\rho$ is Zariski dense in $G$;
\item $\mathsf E_{\nug}(\Ga_\rho)=\fa$;
\item $m_{\nug}^{\BR}$ is $NM$-ergodic;
\item For any $(\Ga_\rho,\psi)$-conformal measure $\nu$
on $\La_\rho$ for $\psi\ne \sigma_1$, we have $[\nug]\ne [\nu]$.
\end{enumerate}

The second case is as follows (in this case $\op{rank} G_2=1$ as a consequence):
\begin{enumerate}
\item $\Ga_\rho$ is not Zariski dense in $G$;
\item $\fa = \R^2$ and $\mathsf E_{\nug}(\Ga_\rho)= \{ t \cdot (\delta_{\rho(\Ga)}, \delta_{\Ga}) \in \R^2 : t \in \R \}$;
\item $m_{\nug}^{\BR}$ is not $NM$-ergodic;
\item For any $(\Ga_\rho, \psi)$-conformal measure $\nu$
on $\La_\rho$ for a tangent linear form $\psi$, we have $[\nug]= [\nu]$. 
\end{enumerate}
\end{theorem}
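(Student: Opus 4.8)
The plan is to dispose of the two complementary cases by first collecting the implications that require no simplicity hypothesis, then invoking simplicity only where permitted, and finally closing the remaining arrows by playing the two cases off one another, using that (1) of the first case is literally the negation of (1) of the second.

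\emph{First case, the chain $(1)\Rightarrow(2)\Leftrightarrow(3)$ and $(2)\Rightarrow(4)$.} The implication $(1)\Rightarrow(2)$ is exactly Theorem \ref{thm.ess}. Since $\nug$ is the unique $(\Gr,\sigma_1)$-conformal measure, it is $\Gr$-ergodic by Corollary \ref{cor.uniqueconformal}, so $(2)\Leftrightarrow(3)$ is a direct application of Proposition \ref{prop.lo}. For $(2)\Rightarrow(4)$, if $\nu$ is a $(\Gr,\psi)$-conformal measure with $[\nug]=[\nu]$, then $\nu\ll\nug$, and Proposition \ref{prop.loabscont} combined with $\ess_{\nug}(\Gr)=\fa$ forces $\psi=\sigma_1$; the contrapositive is precisely (4). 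None of this uses that $G_2$ is simple.

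\emph{Second case with $G_2$ simple, the implications $(1)\Rightarrow(2),(3),(4)$.} Assuming $\Gr$ is not Zariski dense, Lemma \ref{Zdense} furnishes a Lie group isomorphism $\Phi:G_1\to G_2$ extending $\rho$, so $\op{rank}G_2=1$, $\fa=\R^2$, and by uniqueness of the boundary map $f=\Phi|_{\La}$. Writing $c>0$ for the scalar by which $d\Phi$ carries $\fa_1$ into $\fa_2$, every $\ga=(g,\Phi(g))\in\Gr$ has $\mu(\ga)=\mu_1(g)(1,c)$, $\lambda(\ga)=\lambda_1(g)(1,c)$, and for $\xi=(\xi_1,\Phi\xi_1)\in\La_\rho$ one computes $\beta_\xi(o,\ga o)=\beta_{\xi_1}(o_1,go_1)\,(1,c)$; comparing critical exponents gives $\delta_{\rho(\Ga)}=\delta_\Ga/c$, so $(1,c)$ is parallel to $(\delta_{\rho(\Ga)},\delta_\Ga)$. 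Because all these cocycle values lie on the line $\R(1,c)$, the definition of the essential subgroup immediately yields $\ess_{\nug}(\Gr)\subseteq\R(1,c)$. For the reverse inclusion I would invoke Proposition \ref{prop.ess}, whose proof relies only on the divergence type of $\Ga$, the full measure of the weak-Myrberg set, and the existence of a loxodromic $\ga_0$ with a pair in general position (all available here, since $\op{rank}G_2=1$ and $\Gr$ carries loxodromics of unbounded length), to place the Jordan projections $\lambda_1(g_0)(1,c)$ with $\sigma_1(\lambda(\ga_0))$ large into $\ess_{\nug}(\Gr)$. As $\Ga$ is Zariski dense in the rank one group $G_1$, these large lengths $\lambda_1(g_0)$ generate a dense subgroup of $\R$ (by the argument of Lemma \ref{finite} applied to $\Ga$), and closedness of $\ess_{\nug}(\Gr)$ upgrades this to $\ess_{\nug}(\Gr)=\R(1,c)=\R(\delta_{\rho(\Ga)},\delta_\Ga)$, which is (2); then $(3)$ follows from Proposition \ref{prop.lo}. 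For $(1)\Rightarrow(4)$: the limit cone is the ray $\R_{\ge0}(1,c)$ with $\psi_\rho(t(1,c))=t\delta_\Ga$, so any tangent form $\psi$ satisfies $\psi(1,c)=\delta_\Ga$; pulling a $(\Gr,\psi)$-conformal measure $\nu$ back along the graph homeomorphism (as in Proposition \ref{lem.pushforward}) produces a $(\Ga,\psi(1,c))=(\Ga,\delta_\Ga)$-conformal measure on $\La$, which by divergence type must be $\nu_\Ga$ (Theorem \ref{thm.uniquepsdiv}), whence $\nu=\nug$ and $[\nug]=[\nu]$.

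\emph{Closing the loops by complementation.} The contrapositives of the first-case implications $(1)\Rightarrow(2)$ and $(2)\Leftrightarrow(3)$ immediately give the second-case arrows $(2)\Rightarrow(1)$ and $(3)\Rightarrow(1)$, since a proper subgroup of $\fa$, respectively non-ergodicity, is incompatible with $\ess_{\nug}(\Gr)=\fa$, respectively ergodicity of $m_{\nug}^{\BR}$. For the second-case $(4)\Rightarrow(1)$ I would argue by contradiction: were $\Gr$ Zariski dense, Theorem \ref{thm.manycritical} would supply a tangent form $\psi\ne\sigma_1$ with a $(\Gr,\psi)$-conformal measure $\nu$, and the first-case chain $(1)\Rightarrow(2)\Rightarrow(4)$ would give $[\nug]\ne[\nu]$, contradicting that (4) of the second case forces $[\nug]=[\nu]$ for every tangent form. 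Dually, the first-case $(4)\Rightarrow(1)$ is the contrapositive statement that $\Gr$ not Zariski dense violates (4) of the first case: in that degenerate regime the constraint $\psi(1,c)=\delta_\Ga$ carves out a whole line of tangent forms, so some $\psi\ne\sigma_1$ is tangent, and the just-proved second-case $(1)\Rightarrow(4)$ makes its conformal measure equal to $\nug$. This exhibits a $\psi\ne\sigma_1$ with $[\nug]=[\nu]$, exactly the failure of first-case (4). No circularity arises, since the second-case $(4)\Rightarrow(1)$ only uses the direct first-case arrows, while the first-case $(4)\Rightarrow(1)$ only uses the direct (simplicity-assisted) second-case $(1)\Rightarrow(4)$.

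\emph{Main obstacle.} I expect the genuinely delicate point to be the exact determination of $\ess_{\nug}(\Gr)$ in the second case, namely checking that the Anosov-free machinery behind Proposition \ref{prop.ess} survives the loss of Zariski density of $\Gr$, and that the large-length Jordan projections $\lambda_1(g_0)(1,c)$ really generate a dense subgroup of the \emph{full} line $\R(\delta_{\rho(\Ga)},\delta_\Ga)$ rather than a lattice inside it. The accompanying tangent-form bookkeeping, pinning $\psi(1,c)=\delta_\Ga$ and transporting conformality across the graph, is conceptually routine, but it is precisely what identifies the measure class of $\nu$ with that of $\nug$ and so must be carried out with care.
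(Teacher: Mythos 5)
Your proposal follows the paper's architecture for almost every arrow: first-case $(1)\Rightarrow(2)$ is Theorem \ref{thm.ess}, $(2)\Leftrightarrow(3)$ is Proposition \ref{prop.lo} plus the ergodicity of $\nug$ from Corollary \ref{cor.uniqueconformal}, and $(2)\Rightarrow(4)$ is Proposition \ref{prop.loabscont}; second-case $(1)\Rightarrow(2)$ combines Proposition \ref{prop.jordaness} (valid without Zariski density of $\Gr$) with the density of rank-one Jordan projections, second-case $(1)\Rightarrow(4)$ pushes a conformal measure for a tangent form down to $\La$ and invokes Theorem \ref{thm.uniquepsdiv}, and second-case $(4)\Rightarrow(1)$ plays Corollary \ref{cor.manyconformal} against the first case, exactly as in the paper. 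Your derivation of the inclusion $\ess_{\nug}(\Gr)\subset\R(1,c)$ directly from the fact that every cocycle value $\beta_\xi(o,\ga o)$ with $\xi\in\La_\rho$, $\ga\in\Gr$ lies on the line $\R(1,c)$ (taking $o_2=\Phi(o_1)$) is a mild, correct simplification of the paper's route, which obtains the same inclusion from Proposition \ref{prop.loabscont} after first showing that $\nug$ is also $\sigma_2$-conformal.

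The one genuine gap is your first-case $(4)\Rightarrow(1)$. You suppose $\Gr$ is not Zariski dense, note that the tangent forms constitute the affine line $\{\psi:\psi(1,c)=\delta_\Ga\}$, pick a tangent $\psi\ne\sigma_1$, and then assert that the second-case implication $(1)\Rightarrow(4)$ ``makes its conformal measure equal to $\nug$.'' But second-case (4) is conditional: \emph{if} a $(\Gr,\psi)$-conformal measure on $\La_\rho$ exists, then it lies in $[\nug]$. Nothing in your argument produces such a measure, and the only existence mechanism available for conformal measures attached to tangent forms --- Quint's construction behind Corollary \ref{cor.manyconformal} --- requires Zariski density of $\Gr$, which fails in precisely the case at hand. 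So, as written, no violation of first-case (4) has actually been exhibited. The repair is an observation you already made but did not use here: since $\beta_\xi(o,\ga o)\in\R(1,c)$ for all $\xi\in\La_\rho$ and $\ga \in \Gr$, the measure $\nug$ \emph{itself} satisfies the $(\Gr,\psi)$-conformality identity for every $\psi$ with $\psi(1,c)=\delta_\Ga$, in particular for $\sigma_2=\delta_{\rho(\Ga)}\,\alpha_2\circ\pi_2\ne\sigma_1$. Taking $\nu=\nug$ and $\psi=\sigma_2$ then violates first-case (4) trivially (certainly $[\nug]=[\nug]$), with no existence question at all. This ``one measure, several linear forms'' phenomenon is exactly the higher-rank feature the paper's proof exploits; there it is phrased as: $f_*\nu_\Ga$ is $\rho(\Ga)$-conformal, hence $\nug=(f^{-1}\times\id)_*f_*\nu_\Ga$ is $(\Gr,\sigma_2)$-conformal by Lemma \ref{s2}. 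With this substitution your proof closes correctly, and your bookkeeping of where simplicity of $G_2$ is used matches the statement.
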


 \begin{proof}
 We prove the equivalences in each of two cases.  
 
 \medskip
{\noindent \bf First case:}
By Proposition \ref{prop.lo}, the equivalence $(2) \Leftrightarrow (3)$ follows once we note that $\nu_{\op{graph}}$ is a $\Gr$-ergodic conformal measure. Since $\Ga$ is of divergence type, $\nu_{\Ga}$ is the unique $\Ga$-conformal measure by Theorem \ref{thm.divergence}. Hence, by Corollary \ref{cor.uniqueconformal}, we indeed have that $\nu_{\op{graph}}$ is a $\Ga_{\rho}$-ergodic conformal measure. This implies $(2) \Leftrightarrow (3)$.

The implication $(1)\Rightarrow (2)$ was proved in Theorem \ref{thm.ess}. To show the implication $(2) \Rightarrow (4)$, suppose that there exists a $(\Ga_{\rho}, \psi)$-conformal measure $\nu$ on $\La_{\rho}$ with $\psi \neq \sigma_1$ such that $[\nu_{\op{graph}}] = [\nu]$. In particular, $\nu \ll \nu_{\op{graph}}$. Since $\nu_{\op{graph}}$ is $(\Ga_{\rho}, \sigma_1)$-conformal, it follows from Proposition \ref{prop.loabscont} that $\sigma_1$ and $\psi$ coincide on $\ess_{\nu_{\op{graph}}}(\Ga_{\rho})$. Since $\sigma_1 \neq \psi$, this implies that $\ess_{\nu_{\op{graph}}}(\Ga_{\rho}) \neq \fa$. This shows the contrapositive of $(2) \Rightarrow (4)$, as desired.
In order to prove $(4) \Rightarrow (1)$,
 we  assume that (4) holds and that $\Gr$ is not Zariski dense in $G$. 
  By Lemma \ref{Zdense},
 $\rho : \Ga \to G_2$ extends to a Lie group isomorphism $G_1 \to G_2$, which we also denote by $\rho$ by abuse of notation. In particular, $\op{rank} G_2 = 1$.

 For each $i=1,2$, let $\alpha_i \in \fa_i^*$ be the simple root of $(\fg_i, \fa_i^+)$ and define $\sigma_i\in \fa^*$ by
 \be \label{eqn.defsigma2}
\sigma_1(u_1, u_2)  = \delta_{\Ga} \alpha_1(u_1)\text{ and }  \sigma_2(u_1, u_2)  = \delta_{\rho(\Ga)} \alpha_2(u_2).
 \ee

Since there is a unique left $G_i$-invariant Riemannian metric on $X_i$
up to a constant multiple, we may assume that the metric $d_i$ is the one induced from the Killing form of $\frak g_i$ for each $i=1,2$. 
Note that for each $\xi \in \F_1$ (resp. $ \F_2$) and $x, y \in X_1$ (resp. $ X_2$), the product $\delta_{\Ga} \beta_{\xi}(x, y)$ (resp. $\delta_{\rho(\Ga)} \beta_{\xi}(x, y)$) does not change after scaling the metric $d_1$ (resp. $d_2$) by the definition of the critical exponent. 

 Since the differential of $\rho$ must send the Killing form of $\fg_1$ to that of $\fg_2$,
  $\rho$ induces an isometry $X_1=G_1/K_1\to X_2=G_2/\rho(K_1)$, which we again denote by $\rho$, as well
  as the equivariant diffeomorphism $F:\F_1= G_1/P_1\to \F_2= G_2/\rho(P_1)$, so that $f=F|_\La$.
 It follows that $$\delta_\Ga =\delta_{\rho(\Ga)}$$
 and for all $\xi\in \F_1 $ and $x,y\in X_1$,
 $$\beta_\xi(x,y)= \beta_{F(\xi)}(\rho(x), \rho(y)).$$ For simplicity, we set $\delta :=\delta_\Ga=\delta_{\rho(\Ga)}$ below.

We claim that $\nu_{\sigma_1} = (\id \times f)_* \nu_{\Ga}$ is $(\Ga_\rho,\sigma_2)$-conformal.  First note that $f_*\nu_{\Ga}$ is a $(\rho(\Ga), \delta)$-conformal measure on $\La_{\rho(\Ga)}$ with respect to $o_2:=\rho(o_1)$:
for any $g \in \Ga$ and $\xi \in \F_1$, $$
\begin{aligned}
{d \rho(g)_*f_*\nu_{\Ga} \over d f_*\nu_{\Ga}}(f(\xi)) & = {d f_*g_*\nu_{\Ga} \over df_* \nu_{\Ga}}(f(\xi)) = {d g_* \nu_{\Ga} \over d\nu_{\Ga}}(\xi) \\
& = e^{\delta\beta_{\xi}(o_1, go_1)} = e^{\delta \beta_{f(\xi)}(o_2, \rho(g)o_2)}.
\end{aligned}$$
 Therefore, by Lemma \ref{s2},
 the pushforward $(f^{-1} \times \id)_* f_* \nu_{\Ga}$ is a $(\Gr, \sigma_2)$-conformal measure on $\La_{\rho}$.
Since  
$$ \nu_{\sigma_1} = (\id \times f)_* \nu_{\Ga} = (f^{-1} \times \id)_*f_*\nu_{\Ga},
$$ the claim follows.
As $\sigma_2 \neq \sigma_1$, this contradicts (4), proving the implication  $(4) \Rightarrow (1)$.

\medskip
{\bf \noindent Second case:} The first case we just considered
implies  $$(2) \Rightarrow (1) \Leftrightarrow (3).$$

We now claim $(1)\Rightarrow (2)$. 
 Suppose that $\Gr$ is not Zariski dense. Then $\rank G_2 = 1$ and hence $\fa = \R^2$ by Lemma \ref{Zdense}. We use the same notation used in
 the proof of the implication $(4)\Rightarrow (1)$ of the first case. In particular, $\delta_\Ga=\delta_{\rho(\Ga)}$.
 Recall that we have in this situation that
 $\nu_{\sigma_1}=(f^{-1} \times \id)_* f_* \nu_{\Ga}$ is a $(\Gr, \sigma_2)$-conformal measure on $\La_{\rho}$. Therefore,
 Proposition \ref{prop.loabscont} implies that
 
  $$\ess_{\nu_{\sigma_1}}(\Gr) \subset \ker (\sigma_1 - \sigma_2) = \br (1,1)
 $$
where $\R(1, 1) :=  \{ t \cdot (1, 1) \in \R^2 : t \in \R\}$.
 It follows from  Proposition \ref{prop.jordaness} (note that we have not assumed that $\Ga_\rho$ is Zariski dense in that proposition) that $\ess_{\nu_{\sigma_1}}(\Gr)$ contains $\la(\Delta_{\rho}) - E$ for some Zariski dense Schottky subgroup $\Delta < \Ga$ and a finite subset $E$. Since $\rho$ induces an isometry $X_1 \to X_2$, we have $\la(\Delta_{\rho}) \subset \R(1, 1)$, and hence $\la(\Delta_{\rho}) - E$ generates a dense subgroup of $\R(1, 1)$  by Theorem \ref{dense0}. Consequently, $\ess_{\nu_{\sigma_1}}(\Gr) = \R(1, 1)$, as desired.

It remains to prove $(1) \Leftrightarrow (4)$. To prove $(4)\Rightarrow (1)$, suppose that $\Ga_\rho$ is Zariski dense.
By Corollary \ref{cor.manyconformal}, there exists a $(\Gr, \psi)$-conformal measure $\nu$ on $\La_{\rho}$ for a tangent linear form $\psi \neq \sigma_1$. By the first case, $[\nu] \neq [\nu_{\sigma_1}]$. Therefore (4) does not hold. This proves the claim.

We now show that (1) implies (4). Again, suppose that $\Gr$ is not Zariski dense in $G$, and hence $\rho$ extends to a Lie group isomorphism $G_1 \to G_2$. 
As before, since $\rho$ induces an isometry $X_1\to X_2$, we get that the limit cone 
$\L_{\rho}$ is equal to $\R_{\ge 0} (1, 1)$ and $\delta_{\Ga} = \delta_{\rho(\Ga)}$. We denote by $\delta = \delta_{\Ga} = \delta_{\rho(\Ga)}$; so the growth indicator is given by $\psi_{\rho}(t, t) = \delta t$ on $\L_{\rho}$.

Let $\psi \in \fa^*$ be a linear form tangent to $\psi_{\rho}$ with a $(\Gr, \psi)$-conformal measure $\nu_{\psi}$ on $\La_{\rho}$. 
We need to show that $[\nu_\psi]=[\nu_{\sigma_1}]$.
Let $\pi:\F\to \F_1$ denote the canonical projection to the first factor.
We first claim that
$\pi_* \nu_{\psi}$ is a $\Ga$-conformal measure of dimension $\delta$ on $\La$. Since $\psi(1,1)=\delta=\sigma_1(1,1)=\sigma_2(1,1)$,
we have $\psi = (1-c)\sigma_1 + c \sigma_2$ for some $c \in \R$.
As $\nu_\psi$ is $(\Ga_\rho, \psi)$-conformal,
we have for any $\xi \in \La$ and $\ga = (g, \rho(g)) \in \Gr$, $${ d\ga_* \nu_{\psi} \over d \nu_{\psi}}(\xi, f(\xi)) = e^{(1-c) \delta_{\Ga} \beta_{\xi}(o_1, g o_1) + c \delta_{\rho(\Ga)} \beta_{f(\xi)}(o_2, \rho(g) o_2)} = e^{\delta \beta_{\xi}(o_1, g o_1)}$$
where $o_2=\rho(o_1)$.
As in the proof of Proposition \ref{lem.pushforward}, this implies that for any $g \in \Ga$ and $\xi \in \La$, $${d g_* \pi_* \nu_{\psi} \over d \pi_* \nu_{\psi}}(\xi) = e^{ \delta \beta_{\xi}(o_1, go_1)},$$ proving the claim. Since $\Ga$ is of divergence type, it follows that
$\pi_* \nu_{\psi}$ is equivalent to $\nu_{\Ga}$  by Theorem \ref{thm.uniquepsdiv}. As $\nu_{\psi}$ is supported on $\La_{\rho} = (\id \times f)(\La)$ and $\pi$ is injective on $\La_{\rho}$ with $\pi|_{\La_\rho}^{-1} = \id \times f$, we have
$$\nu_{\psi} = (\id \times f)_*\pi_* \nu_{\psi} .$$
 Therefore, 
 $[\nu_{\psi}] = [(\id \times f)_*\nu_{\Ga}] = [\nu_{\sigma_1}]$. 
 This finishes the proof.
\end{proof}

\subsection*{Proof of Theorem \ref{mmm}}

Let $\nu_{\rho(\Ga)}$ be a $(\rho(\Ga), \psi)$-conformal measure on $\La_{\rho(\Ga)}$.
Then by Lemma \ref{s2},
$ \nu_{\sigma_2}: = (f^{-1} \times \id)_* \nu_{\rho(\Ga)}$ is $(\Gr, \sigma_2)$-conformal for
$\sigma_2 = \psi \circ \pi_2 \in \fa^*$, and
$$[\nu_{\sigma_2}] = [\nu_{\sigma_1}] \mbox{ if and only if } [\nu_{\rho(\Ga)}] = [f_*\nu_{\Ga}].$$

Hence if $[f_* \nu_{\Ga}] = [\nu_{\rho(\Ga)}]$, then $[\nu_{\sigma_1}] = [\nu_{\sigma_2}]$. From the first case of Theorem \ref{mplast}, $\Gr$ is not Zariski dense, which implies that $\rho$ extends to a Lie group isomorphism $G_1 \to G_2$ by Lemma \ref{Zdense}. Hence we get $(1) \Rightarrow (2)$.

Conversely, suppose that $\rho$ extends to a Lie group isomorphism $G_1 \to G_2$.
Then $\Ga_\rho$ is not Zariski dense and $\op{rank} G_2 = 1$. By the work of Patterson \cite{Patterson1976limit} and Sullivan \cite{Sullivan1979density}, there exists a $\rho(\Ga)$-conformal measure $\nu_{\rho(\Ga)}$ on $\La_{\rho(\Ga)}$ of dimension $\delta_{\rho(\Ga)}$.  We then consider $\sigma_2$ and $\nu_{\sigma_2}$ defined same as above. Since $\sigma_2$ is tangent to $\psi_{\rho}$ by Lemma \ref{s1}, it follows from the second case of Theorem \ref{mplast} that $[\nu_{\sigma_2}] = [\nu_{\sigma_1}]$; so $[\nu_{\rho(\Ga)}] = [f_* \nu_{\Ga}]$. This proves $(2) \Rightarrow (1)$.

\begin{Rmk} \rm
    When $\Ga$ is of divergence type, $\nu_{\Ga}$ is $\Ga$-ergodic and hence the first condition in Theorem \ref{mmm} is the same as saying that some $\rho(\Ga)$-conformal measure is absolutely continuous with respect to $f_*\nu_{\Ga}$ (see Lemma \ref{ergg}).
\end{Rmk}

\section{Measure class rigidity for Anosov representations}\label{sec.anosov}
One class of Zariski dense discrete subgroups of $G$ where the space of conformal measures on $\La_\Delta$ is well-understood is the class of Anosov subgroups with respect to $P$.
There are several equivalent definitions of Anosov subgroups (one is given in Example \ref{ex.bdr}(3)), and the following is due to
Kapovich-Leeb-Porti \cite{Kapovich2017anosov} (see also \cite{Labourie2006anosov}, \cite{Guichard2012anosov}).  Recall that $\Pi$ denotes the set of all simple roots of $\fg$ with respect to $\fa^+$:
\begin{Def}[Anosov representation] \label{ano} For a finitely generated group $\Sigma$ and a non-empty subset $\Pi_0\subset \Pi$,
a representation $\rho: \Sigma \to G$ is said to be Anosov with respect to $\Pi_0$
if for all $g \in \Sigma$ and 
for all $\alpha\in \Pi_0$,
$$ \alpha (\mu(\rho(g))) \ge C_1 |g| -C_2 $$  where $C_1, C_2>0$ are uniform constants and $|\cdot |$ is a word metric.
Its image $\Delta=\rho(\Sigma)$ is called an Anosov subgroup with respect to $\Pi_0$.
Anosov subgroups with respect to $P$ mean Anosov subgroups with respect to $\Pi$.
\end{Def}

 We recall the following theorem which follows from \cite[Theorem 1.3]{lee2020invariant}, together with \cite[Theorem 1.4]{edwards2021unique} and \cite[Theorem 1.2]{lee2022dichotomy}.

\begin{theorem}[{\cite[Theorem 1.3]{lee2020invariant}}]\label{loo}
Let $\Delta<G$ be a Zariski dense Anosov subgroup  with respect to $P$.
For each linear form $\psi$ tangent to $\psi_\Delta$, there exists a unique $(\Delta,\psi)$-conformal measure $\nu_\psi$ on $\F$ with respect to $o$.
Moreover, we have
\begin{enumerate}
    \item  the map $\psi\mapsto \nu_\psi$
gives a bijection between the space of all tangent linear forms and
    the space of all $\Delta$-conformal measures supported on $\La_\Delta$;
    \item  if $\psi_1\ne \psi_2$, then $\nu_{\psi_1}$ and $\nu_{\psi_2}$
    are  mutually singular to each other.
    \end{enumerate}
\end{theorem}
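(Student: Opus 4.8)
The plan is to assemble the statement from Quint's construction and the essential-subgroup machinery of Section \ref{sec.confess}, treating the delicate \emph{uniqueness for a fixed form} as the one genuinely external input. For existence, if $\psi$ is tangent to $\psi_\Delta$ at a direction of $\inte\fa^+$, then Quint \cite[Theorem 8.4]{Quint2002Mesures} produces a $(\Delta,\psi)$-conformal measure supported on $\La_\Delta$; since $\Delta$ is Zariski dense we have $\inte\L_\Delta\neq\emptyset$, and for a $P$-Anosov subgroup the tangency of any tangent form occurs at an interior direction of $\L_\Delta$, so this covers every tangent form. The decisive structural input is that $\ess_\nu(\Delta)=\fa$ for \emph{every} $\Delta$-conformal measure $\nu$ on $\La_\Delta$ (\cite[Proposition 10.2]{lee2020invariant}). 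I would obtain this exactly as in Section \ref{sec.ess}: the antipodal Anosov structure forces the Myrberg limit set to have full $\nu$-measure, and then the shadow lemma (Lemma \ref{lem.shadow}) together with a Vitali-type covering argument places each Jordan projection $\lambda(\gamma_0)$ into $\ess_\nu(\Delta)$, after which Benoist's density theorem (Theorem \ref{dense0}) gives $\ess_\nu(\Delta)=\fa$. By Proposition \ref{prop.lo} this makes $m^{\BR}_\nu$ be $NM$-ergodic, and in particular each such $\nu$ is $\Delta$-ergodic.

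With ergodicity and $\ess_\nu(\Delta)=\fa$ in hand, the mutual singularity in (2) and the injectivity of $\psi\mapsto\nu_\psi$ follow together. Two $\Delta$-ergodic quasi-invariant measures are either mutually singular or equivalent: decomposing $\nu_{\psi_1}$ into its $\nu_{\psi_2}$-absolutely continuous and $\nu_{\psi_2}$-singular parts, each part is $\Delta$-quasi-invariant, so ergodicity forces one of them to vanish, and in the absolutely continuous case Lemma \ref{ergg} upgrades $\nu_{\psi_1}\ll\nu_{\psi_2}$ to $[\nu_{\psi_1}]=[\nu_{\psi_2}]$. If $\nu_{\psi_1}$ and $\nu_{\psi_2}$ were not mutually singular they would thus be equivalent, and Proposition \ref{prop.loabscont}, applicable because $\ess_{\nu_{\psi_1}}(\Delta)=\fa$, would force $\psi_1=\psi_2$. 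Contrapositively, $\psi_1\neq\psi_2$ yields $\nu_{\psi_1}\perp\nu_{\psi_2}$, which is (2) and simultaneously shows the map is injective.

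For the surjectivity half of (1), I would start from an arbitrary $\Delta$-conformal measure $\nu$ on $\La_\Delta$ with associated linear form $\psi$. Quint's lower bound \cite[Theorem 8.1]{Quint2002Mesures} gives $\psi\ge\psi_\Delta$, so in particular $\psi\ge 0$ on $\L_\Delta$. A standard application of the shadow lemma to the probability measure $\nu$, estimating $\nu$ of the shadows $O_r(o,\gamma o)$ by $e^{-\psi(\mu(\gamma))}$, pins the abscissa of convergence of $\sum_{\gamma}e^{-s\psi(\mu(\gamma))}$ at $\delta_\psi=1$. Lemma \ref{s1} then shows that $\psi=\delta_\psi\psi$ is tangent to $\psi_\Delta$, so $\nu$ is one of the measures $\nu_\psi$. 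Combined with existence and injectivity, this establishes the claimed bijection.

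The main obstacle is the \emph{uniqueness} of $\nu_\psi$ for a fixed tangent form $\psi$: two $(\Delta,\psi)$-conformal measures need not be a priori comparable, so the essential-subgroup argument above—which only constrains the associated linear form of a measure absolutely continuous with respect to another—does not directly apply. This is where the cited results \cite{edwards2021unique} and \cite{lee2022dichotomy} enter: pairing $\psi$ with its image under the opposition involution one constructs the associated Bowen–Margulis–Sullivan measure, and its finiteness and ergodicity force the conformal density to be unique. I would treat this step as a black box, since it rests on thermodynamic formalism rather than on the soft measure-theoretic arguments used for the remaining parts, and cite it directly.
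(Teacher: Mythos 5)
First, a point of calibration: the paper does not prove this theorem at all. It is stated as a recalled external result, imported wholesale from \cite[Theorem 1.3]{lee2020invariant} together with \cite[Theorem 1.4]{edwards2021unique} and \cite[Theorem 1.2]{lee2022dichotomy}. So your reconstruction attempts strictly more than the paper does, and its overall architecture --- Quint's construction for existence, full essential subgroups plus Proposition \ref{prop.loabscont} for injectivity and mutual singularity, a shadow-lemma/critical-exponent argument via Lemma \ref{s1} for surjectivity, and uniqueness-for-fixed-$\psi$ as the one thermodynamic black box --- is faithful to how the result is actually proved in the cited papers, and mirrors the adaptation that the present paper carries out in Sections \ref{swM}--\ref{sec.proof} for non-Anosov self-joinings.

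There is, however, one step that fails as written: your derivation of ergodicity. Proposition \ref{prop.lo} is stated for a $\Delta$-conformal \emph{ergodic} measure $\nu$; ergodicity is a hypothesis there, not a conclusion, so you cannot invoke it to deduce that $m^{\BR}_{\nu}$ is $NM$-ergodic and ``in particular'' that $\nu$ is $\Delta$-ergodic --- that is circular. The repair is available inside your own framework. For a tangent form $\psi$, your black box says $\nu_\psi$ is the \emph{unique} $(\Delta,\psi)$-conformal measure, and uniqueness implies ergodicity: if $B$ were $\Delta$-invariant with $0<\nu_\psi(B)<1$, the normalized restrictions of $\nu_\psi$ to $B$ and to its complement would be two distinct $(\Delta,\psi)$-conformal measures. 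This is exactly the mechanism of Corollary \ref{cor.uniqueconformal}. Alternatively, you can avoid ergodicity altogether: in the Lebesgue decomposition of $\nu_{\psi_1}$ with respect to $\nu_{\psi_2}$ the singular set may be taken $\Delta$-invariant, so the absolutely continuous part is (after normalization) itself a $(\Delta,\psi_1)$-conformal measure absolutely continuous with respect to $\nu_{\psi_2}$, and Proposition \ref{prop.loabscont} together with $\ess_{\nu_{\psi_2}}(\Delta)=\fa$ already forces $\psi_1=\psi_2$; thus non-singularity implies equality of forms with no ergodicity input, and the $NM$-ergodicity of the Burger--Roblin measure is a red herring for this theorem. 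Two smaller imprecisions: Quint's construction requires tangency at a direction of $\inte\fa^+$, not of $\inte\L_\Delta$ --- what you need, and what holds for $P$-Anosov groups, is $\L_\Delta-\{0\}\subset\inte\fa^+$, so every tangent form qualifies; and in the surjectivity step, pinning $\delta_\psi=1$ requires both the divergence bound $\delta_\psi\ge 1$ (every limit point of an Anosov group is conical, so shadows cover $\nu$-a.e.\ point infinitely often) and a convergence bound, the latter resting on positivity of $\psi$ on $\L_\Delta-\{0\}$, which deserves to be said explicitly rather than absorbed into ``a standard application of the shadow lemma.''
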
    
We  mention that the space of all tangent linear forms in this case is homeomorphic to
$\br^{\rank G-1}$. See also
\cite{sambarino2022report} for some partial extension for Anosov subgroups with respect to general $\Pi_0$.

\subsection*{Proof of Theorem \ref{ananintro}}
We now begin the proof of Theorem \ref{ananintro}. Recall the notations introduced there:  let $\rho_i : \Sigma \to G_i$ a Zariski dense Anosov representation of a hyperbolic group $\Sigma$ into a simple real algebraic group $G_i$ for $i = 1, 2$. Recall the notations: $\Ga = \rho_1(\Sigma) < G_1$, $\rho = \rho_2 \circ \rho_1^{-1} : \Ga \to G_2$, and
$f=f_2\circ f_1^{-1} : \La_{\Ga} \to \F_2$ where  $f_i : \partial \Sigma \to \F_i$ is a
$\rho_i$-equivariant embedding for $i=1,2$. Let $\nu_{\Ga}$ and $\nu_{\rho(\Ga)}$ be $\Ga$-conformal and $\rho(\Ga)$-conformal measures on $\La_{\Ga}$ and $\La_{\rho(\Ga)}$ respectively.

 Let $G=G_1\times G_2$ and $\fa = \fa_1 \oplus \fa_2$. Let $\psi_1 \in \fa_1^*$ and $\psi_2 \in \fa_2^*$ be linear forms associated to $\nu_{\Ga}$ and $\nu_{\rho(\Ga)}$ respectively. For convenience, we put $\nu_{\psi_1} = \nu_{\Ga}$ and $\nu_{\psi_2} = \nu_{\rho(\Ga)}$. Let $\Psi_i = \psi_i \circ \pi_i$ for each $i = 1, 2$. Then $\Psi_i$ is a linear form on $\fa$ and it follows from Proposition \ref{lem.pushforward}  and Lemma \ref{s2} that $$(\id \times f)_* \nu_{\psi_1} \quad \mbox{and} \quad (f^{-1} \times \id)_*\nu_{\psi_2}$$ are $(\Gr, \Psi_1)$-conformal and $(\Gr, \Psi_2)$-conformal measures on $\La_{\rho}$ respectively.

Suppose that $\rho : \Ga \to G_2$ does not extend to a Lie group isomorphism $G_1 \to G_2$. Since $G_1$ and $G_2$ are simple, Lemma \ref{Zdense} implies that $\Gr$ is a Zariski dense Anosov subgroup of $G_1 \times G_2$. Hence by Theorem \ref{loo}, we have $$(\id \times f)_* \nu_{\psi_1} \perp (f^{-1} \times \id)_*\nu_{\psi_2}.$$ As in the proof of 
Lemma \ref{s2}, this implies that $$f_* \nu_{\psi_1} \perp \nu_{\psi_2}.$$
This proves Theorem \ref{ananintro}.

\subsection*{Proof of Corollary \ref{cor.hitchin}}
Suppose that for a Hitchin representation $\rho : \Ga \to \PSL_d(\R)$ and its boundary map  $f : \S^1 \to \F$, we have that $f_* \op{Leb}_{\S^1}$ is non-singular to a $\rho(\Ga)$-conformal measure. Since $\Ga < \PSL_2(\R)$ is cocompact, it is of divergence type and $\op{Leb}_{\S^1}$ is the $\Ga$-conformal measure on $\La_{\Ga} = \S^1$. Since a Hitchin representation is an Anosov representation, it follows from Theorem \ref{ananintro} that $\rho : \Ga \to \PSL_d(\R)$ extends to a Lie group isomorphism $\PSL_2(\R) \to \PSL_d(\R)$, which is equivalent to say that $d = 2$ and $\rho$ is a conjugation.

\begin{Rmk} \rm
We finally mention that replacing \cite[Theorem 10.20]{lee2020invariant}  by an analogous result in
\cite{sambarino2022report} for Anosov representations with respect to a general parabolic subgroup,
we can also prove an analogous statement to
Theorem \ref{anan} in those cases provided that the Furstenberg boundaries and limit sets are appropriately replaced as well.
\end{Rmk}


\begin{thebibliography}{10}

\bibitem{agol2004tameness}
I.~Agol.
\newblock Tameness of hyperbolic 3-manifolds.
\newblock {\em arXiv preprint math/0405568}, 2004.

\bibitem{Benoist1997proprietes}
Y.~Benoist.
\newblock Propri\'{e}t\'{e}s asymptotiques des groupes lin\'{e}aires.
\newblock {\em Geom. Funct. Anal.}, 7(1):1--47, 1997.

\bibitem{Benoist2000proprietes}
Y.~Benoist.
\newblock Propri\'{e}t\'{e}s asymptotiques des groupes lin\'{e}aires. {II}.
\newblock In {\em Analysis on homogeneous spaces and representation theory of
  {L}ie groups, {O}kayama--{K}yoto (1997)}, volume~26 of {\em Adv. Stud. Pure
  Math.}, pages 33--48. Math. Soc. Japan, Tokyo, 2000.

\bibitem{Bourdon1995structure}
M.~Bourdon.
\newblock Structure conforme au bord et flot g\'{e}od\'{e}sique d'un {${\rm
  CAT}(-1)$}-espace.
\newblock {\em Enseign. Math. (2)}, 41(1-2):63--102, 1995.

\bibitem{Calegari2006shrinkwrapping}
D.~Calegari and D.~Gabai.
\newblock Shrinkwrapping and the taming of hyperbolic 3-manifolds.
\newblock {\em J. Amer. Math. Soc.}, 19(2):385--446, 2006.

\bibitem{Canary1993ends}
R.~Canary.
\newblock Ends of hyperbolic {$3$}-manifolds.
\newblock {\em J. Amer. Math. Soc.}, 6(1):1--35, 1993.

\bibitem{Cannon2007peano}
J.~Cannon and W.~Thurston.
\newblock Group invariant {P}eano curves.
\newblock {\em Geom. Topol.}, 11:1315--1355, 2007.

\bibitem{Conway_functional}
J.~Conway.
\newblock {\em A course in functional analysis}, volume~96 of {\em Graduate
  Texts in Mathematics}.
\newblock Springer-Verlag, New York, second edition, 1990.

\bibitem{Corlette1999limit}
K.~Corlette and A.~Iozzi.
\newblock Limit sets of discrete groups of isometries of exotic hyperbolic
  spaces.
\newblock {\em Trans. Amer. Math. Soc.}, 351(4):1507--1530, 1999.

\bibitem{Das2016tukia}
T.~Das, D.~Simmons, and M.~Urba\'{n}ski.
\newblock Tukia's isomorphism theorem in {$\rm{CAT}(-1)$} spaces.
\newblock {\em Ann. Acad. Sci. Fenn. Math.}, 41(2):659--680, 2016.

\bibitem{edwards2021unique}
S.~Edwards, M.~Lee, and H.~Oh.
\newblock Uniqueness of conformal measures and local mixng for anosov groups.
\newblock {\em Michigan Math. J. 72 (Prasad volume), 243-259}.

\bibitem{edwards2020anosov}
S.~Edwards, M.~Lee, and H.~Oh.
\newblock Anosov groups: local mixing, counting and equidistribution.
\newblock {\em Geom. Topol.}, 27(2):513--573, 2023.

\bibitem{Guichard}
O.~Guichard.
\newblock In preparation.

\bibitem{Guichard2012anosov}
O.~Guichard and A.~Wienhard.
\newblock Anosov representations: domains of discontinuity and applications.
\newblock {\em Invent. Math.}, 190(2):357--438, 2012.

\bibitem{kapovich2018relativizing}
M.~Kapovich and B.~Leeb.
\newblock Relativizing characterizations of {A}nosov subgroups, {I}.
\newblock {\em Groups Geom. Dyn.}, 17(3):1005--1071, 2023.
\newblock With an appendix by Gregory A. Soifer.

\bibitem{Kapovich2017anosov}
M.~Kapovich, B.~Leeb, and J.~Porti.
\newblock Anosov subgroups: dynamical and geometric characterizations.
\newblock {\em Eur. J. Math.}, 3(4):808--898, 2017.

\bibitem{Kapovich2018morse}
M.~Kapovich, B.~Leeb, and J.~Porti.
\newblock A {M}orse lemma for quasigeodesics in symmetric spaces and
  {E}uclidean buildings.
\newblock {\em Geom. Topol.}, 22(7):3827--3923, 2018.

\bibitem{kim2021tent}
D.~M. Kim, Y.~Minsky, and H.~Oh.
\newblock Tent property of the growth indicator functions and applications.
\newblock {\em Geom. Dedicata}, 218(1):Paper No.14, 18, 2024.

\bibitem{kim2022rigidity}
D.~M. Kim and H.~Oh.
\newblock Rigidity of {K}leinian groups via self-joinings.
\newblock {\em Invent. Math.}, 234(3):937--948, 2023.

\bibitem{Labourie2006anosov}
F.~Labourie.
\newblock Anosov flows, surface groups and curves in projective space.
\newblock {\em Invent. Math.}, 165(1):51--114, 2006.

\bibitem{lee2020invariant}
M.~Lee and H.~Oh.
\newblock Invariant measures for horospherical actions and {A}nosov groups.
\newblock {\em Int. Math. Res. Not. IMRN}, (19):16226--16295, 2023.

\bibitem{lee2022dichotomy}
M.~Lee and H.~Oh.
\newblock Dichotomy and measures on limit sets of {A}nosov groups.
\newblock {\em Int. Math. Res. Not. IMRN}, (7):5658--5688, 2024.

\bibitem{Margulisbook}
G.~Margulis.
\newblock {\em Discrete subgroups of semisimple {L}ie groups}, volume~17 of
  {\em Ergebnisse der Mathematik und ihrer Grenzgebiete (3)}.
\newblock Springer-Verlag, Berlin, 1991.

\bibitem{Mj2014cannon}
M.~Mj.
\newblock Cannon-{T}hurston maps for surface groups.
\newblock {\em Ann. of Math. (2)}, 179(1):1--80, 2014.

\bibitem{Mj2018cannon}
M.~Mj.
\newblock Cannon-{T}hurston maps.
\newblock In {\em Proceedings of the {I}nternational {C}ongress of
  {M}athematicians---{R}io de {J}aneiro 2018. {V}ol. {II}. {I}nvited lectures},
  pages 885--917. World Sci. Publ., Hackensack, NJ, 2018.

\bibitem{Mostow1968quasiconformal}
G.~Mostow.
\newblock Quasi-conformal mappings in {$n$}-space and the rigidity of
  hyperbolic space forms.
\newblock {\em Inst. Hautes \'{E}tudes Sci. Publ. Math.}, (34):53--104, 1968.

\bibitem{Mostowbook}
G.~Mostow.
\newblock {\em Strong rigidity of locally symmetric spaces}.
\newblock Annals of Mathematics Studies, No. 78. Princeton University Press,
  Princeton, N.J.; University of Tokyo Press, Tokyo, 1973.

\bibitem{Patterson1976limit}
S.~Patterson.
\newblock The limit set of a {F}uchsian group.
\newblock {\em Acta Math.}, 136(3-4):241--273, 1976.

\bibitem{Prasad1973strong}
G.~Prasad.
\newblock Strong rigidity of {${\bf Q}$}-rank {$1$} lattices.
\newblock {\em Invent. Math.}, 21:255--286, 1973.

\bibitem{Quint2002divergence}
J.-F. Quint.
\newblock Divergence exponentielle des sous-groupes discrets en rang
  sup\'{e}rieur.
\newblock {\em Comment. Math. Helv.}, 77(3):563--608, 2002.

\bibitem{Quint2002Mesures}
J.-F. Quint.
\newblock Mesures de {P}atterson-{S}ullivan en rang sup\'{e}rieur.
\newblock {\em Geom. Funct. Anal.}, 12(4):776--809, 2002.

\bibitem{Rees1981checking}
M.~Rees.
\newblock Checking ergodicity of some geodesic flows with infinite {G}ibbs
  measure.
\newblock {\em Ergodic Theory Dynam. Systems}, 1(1):107--133, 1981.

\bibitem{Roblin2003ergodicite}
T.~Roblin.
\newblock Ergodicit\'{e} et \'{e}quidistribution en courbure n\'{e}gative.
\newblock {\em M\'{e}m. Soc. Math. Fr. (N.S.)}, (95):vi+96, 2003.

\bibitem{SambarinoZc}
A.~Sambarino.
\newblock Infinitesimal {Z}ariski closures of positive representations.
\newblock {\em Preprint, arXiv:2012.10276}.

\bibitem{sambarino2022report}
A.~Sambarino.
\newblock A report on an ergodic dichotomy.
\newblock {\em Ergodic Theory Dynam. Systems}, 44(1):236--289, 2024.

\bibitem{Schmidt1977cocycles}
K.~Schmidt.
\newblock {\em Cocycles on ergodic transformation groups}.
\newblock Macmillan Lectures in Mathematics, Vol. 1. Macmillan Co. of India,
  Ltd., Delhi, 1977.

\bibitem{Sullivan1979density}
D.~Sullivan.
\newblock The density at infinity of a discrete group of hyperbolic motions.
\newblock {\em Inst. Hautes \'{E}tudes Sci. Publ. Math.}, (50):171--202, 1979.

\bibitem{Sullivan1981ergodic}
D.~Sullivan.
\newblock On the ergodic theory at infinity of an arbitrary discrete group of
  hyperbolic motions.
\newblock volume~97 of {\em Ann. of Math. Stud.} Princeton Univ. Press, 1981.

\bibitem{Sullivan1982discrete}
D.~Sullivan.
\newblock Discrete conformal groups and measurable dynamics.
\newblock {\em Bull. Amer. Math. Soc. (N.S.)}, 6(1):57--73, 1982.

\bibitem{Sullivan1984entropy}
D.~Sullivan.
\newblock Entropy, {H}ausdorff measures old and new, and limit sets of
  geometrically finite {K}leinian groups.
\newblock {\em Acta Math.}, 153(3-4):259--277, 1984.

\bibitem{Tukia1985isomorphisms}
P.~Tukia.
\newblock On isomorphisms of geometrically finite {M}\"{o}bius groups.
\newblock {\em Inst. Hautes \'{E}tudes Sci. Publ. Math.}, (61):171--214, 1985.

\bibitem{Tukia1989rigidity}
P.~Tukia.
\newblock A rigidity theorem for {M}\"{o}bius groups.
\newblock {\em Invent. Math.}, 97(2):405--431, 1989.

\bibitem{Yaman2004topological}
A.~Yaman.
\newblock A topological characterisation of relatively hyperbolic groups.
\newblock {\em J. Reine Angew. Math.}, 566:41--89, 2004.

\bibitem{Yue1996mostow}
C.~Yue.
\newblock Mostow rigidity of rank {$1$} discrete groups with ergodic
  {B}owen-{M}argulis measure.
\newblock {\em Invent. Math.}, 125(1):75--102, 1996.

\end{thebibliography}

\end{document}